\numberwithin{equation}{section}
\newtheorem{theorem}{Theorem}[section]
\newtheorem{lemma}[theorem]{Lemma}
\newtheorem{remark}[theorem]{Remark}
\newtheorem{proposition}[theorem]{Proposition}
\newtheorem{definition}[theorem]{Definition}
\newtheorem{example}[theorem]{Example}
\newtheorem{assumption}[theorem]{Assumption}
\newtheorem*{property}{Property}
\newcommand{\dd}{\,\mathrm{d}}
\newcommand{\R}{\mathbb{R}}
\newcommand{\N}{\mathbb{N}}
\newcommand{\Z}{\mathbb{Z}}
\newcommand{\1}{\mathbf{1}}
\newcommand{\D}{\mathrm{D}}
\newcommand{\bS}{\mathbf{S}}
\newcommand{\bX}{\mathbf{X}}
\newcommand{\crpXq}{\mathcal{V}^q_X}
\newcommand{\crpXp}{\mathcal{V}^p_X}
\newcommand{\crpSq}{\mathcal{V}^q_S}
\newcommand{\crpSp}{\mathcal{V}^p_S}
\newcommand{\crpZq}{\mathcal{V}^q_Z}
\newcommand{\bZ}{\mathbf{Z}}
\newcommand{\X}{\mathbb{X}}
\renewcommand{\S}{\mathbb{S}}
\renewcommand{\d}{\mathrm{d}}
\renewcommand{\epsilon}{\varepsilon}
\newcommand{\cP}{\mathcal{P}}
\newcommand{\W}{\mathbb{W}}
\newcommand{\bW}{\mathbf{W}}
\newcommand{\E}{\mathbb{E}}
\title[Portfolio theory with rough paths]{Model-free portfolio theory: a rough path approach}
\author[Allan]{Andrew L. Allan}
\address{Andrew L. Allan, Durham University, United Kingdom}
\email{andrew.l.allan@durham.ac.uk}
\author[Cuchiero]{Christa Cuchiero}
\address{Christa Cuchiero, University of Vienna, Austria}
\email{christa.cuchiero@univie.ac.at}
\author[Liu]{Chong Liu}
\address{Chong Liu, ShanghaiTech University, China}
\email{liuchong@shanghaitech.edu.cn}
\author[Pr{\"o}mel]{David J. Pr{\"o}mel}
\address{David J. Pr{\"o}mel, University of Mannheim, Germany}
\email{proemel@uni-mannheim.de}
\date{\today}
\begin{document}

\begin{abstract}
  Based on a rough path foundation, we develop a model-free approach to stochastic portfolio theory (SPT). Our approach allows to handle significantly more general portfolios compared to previous model-free approaches based on F{\"o}llmer integration. Without the assumption of any underlying probabilistic model, we prove a pathwise formula for the relative wealth process which reduces in the special case of functionally generated portfolios to a pathwise version of the so-called master formula of classical SPT. We show that the appropriately scaled asymptotic growth rate of a far reaching generalization of Cover's universal portfolio based on controlled paths coincides with that of the best retrospectively chosen portfolio within this class. We provide several novel results concerning rough integration, and highlight the advantages of the rough path approach by showing that (non-functionally generated) log-optimal portfolios in an ergodic It{\^o} diffusion setting have the same asymptotic growth rate as 
Cover's universal portfolio and the best retrospectively chosen one.  
  
\end{abstract}

\maketitle

\noindent \textbf{Key words:} stochastic portfolio theory, Cover's universal portfolio, log-optimal portfolio, model uncertainty, pathwise integration, rough path.

\noindent \textbf{MSC 2020 Classification:} 91G10, 60L20.


\section{Introduction}

Classical approaches to portfolio theory, going back to the seminal work of H.~Markowitz \cite{Markowitz1959} (see also the early work of B.~de~Finetti \cite{deFinetti1940}), are essentially based on simplistic probabilistic models for the asset returns or prices. As a first step  classical portfolio selection thus requires to build and statistically estimate a probabilistic model of the future asset returns. The second step is usually to find an ``optimal'' portfolio with respect to the now fixed model. However, it is well known that the obtained optimal portfolios and their performance are highly sensitive to model misspecifications and estimation errors; see e.g.~\cite{Chopra1993,DeMiguel2007}.

In order to account for model misspecification and model risk, the concept of model ambiguity, also known as Knightian uncertainty, has gained increasing importance in portfolio theory; see e.g.~\cite{Pflug2007,Guidolin2013}. Here the rationale is to accomplish the portfolio selection with respect to a pool of probabilistic models, rather than a specific one. This has been pushed further by adopting completely \emph{model-free} (or pathwise) approaches, where the trajectories of the asset prices are assumed to be deterministic functions of time. That is, no statistical properties of the asset returns or prices are postulated; see e.g.~\cite{Pal2016,Schied2018,Cuchiero2019}. In portfolio theory there are two major approaches which provide such model-free ways of determining ``optimal'' portfolios: universal and stochastic portfolio theory.

The objective of universal portfolio theory is to find general preference-free well performing investment strategies without referring to a probabilistic setting; see \cite{Li2014} for a survey. This theory was initiated by T.~Cover \cite{Cover1991}, who showed that a properly chosen ``universal'' portfolio has the same asymptotic growth rate as the best retrospectively chosen (constantly rebalanced) portfolio in a discrete-time setting. Here, the word ``universal'' indicates the model-free nature of the constructed portfolio.

Stochastic portfolio theory (SPT), initiated by R.~Fernholz \cite{Fernholz1999,Fernholz2001}, constitutes a descriptive theory aiming to construct and analyze portfolios using only properties of observable market quantities; see \cite{Fernholz2002,Karatzas2009} for detailed introductions. While classical SPT still relies on an underlying probabilistic model, its descriptive nature leads to essentially model-free constructions of ``optimal'' portfolios.

A model-free treatment of universal and stochastic portfolio theory in \emph{continuous-time} was recently introduced in \cite{Schied2018, Cuchiero2019}, clarifying the model-free nature of these theories. So far this analysis has been limited to so-called (generalized) \emph{functionally generated portfolios}, cf.~\cite{Fernholz1999,Strong2014,Schied2018}. These are investment strategies based on logarithmic gradients of so-called portfolio generating functions. This limitation is due to the fact that the corresponding portfolio wealth processes can be defined in a purely pathwise manner only for gradient-type strategies, namely, via F{\"o}llmer's probability-free notion of It{\^o} integration; see F{\"o}llmer's pioneering work~\cite{Follmer1981} and its extensions \cite{Cont2010,Cont2019,CC:22}. Even though these  limitations do not occur in discrete time, optimal portfolio selection approaches based on functionally generated portfolios have also gained  attention in discrete time setups; see e.g.~\cite{Campbell2022}. Another strand of research is robust maximization of asymptotic growth within a pool of Markovian models as pursued in \cite{KardarasRobertson2012, Kardaras2018, Itkin2020}. While these approaches clearly account for model uncertainty, a probabilistic structure still enters via a Markovian volatility matrix and an invariant measure for the market weights process. In a similar direction goes the construction of optimal arbitrages under model uncertainty as pioneered in \cite{Fernholz2011}.

The main goal of the present article is to develop an entirely model-free portfolio theory in continuous-time, in the spirit of stochastic and universal portfolio theory, which allows one to work with a significantly larger class of investment strategies and portfolios. For this purpose, we rely on the pathwise (rough) integration offered by rough path theory---as exhibited in e.g.~\cite{Lyons2002, Lyons2007, Friz2010, Friz2020}---and assume that the (deterministic) price trajectories on the underlying financial market satisfy the so-called Property~(RIE), as introduced in \cite{Perkowski2016}; see Section~\ref{subsec: RIE}. While Property (RIE) does not require any probabilistic structure, it is satisfied, for instance, by the sample paths of semimartingale models fulfilling the condition of ``no unbounded profit with bounded risk'' and, furthermore, it ensures that rough integrals are given as limits of suitable Riemann sums. This is essential in view of the financial interpretation of the integral as the wealth process associated to a given portfolio.

In the spirit of stochastic portfolio theory, we are interested in the relative performance of the wealth processes, where the word ``relative'' may be interpreted as ``in comparison with the market portfolio''. In other words, given $d$ assets with associated price process $S = (S_t^1, \ldots, S_t^d)_{t \in [0,\infty)}$ satisfying Property (RIE), we choose the total market capitalization $S^1 + \cdots + S^d$ as num{\'e}raire, so that the primary assets are the market weights $\mu = (\mu^1_t, \ldots, \mu_t^d)_{t \in [0,\infty)}$, given by
\begin{equation*}
  \mu^i_t := \frac{S_t^i}{S_t^1 + \cdots + S_t^d}, \qquad i = 1, \dots, d,
\end{equation*}
which take values in the open unit simplex $\Delta^d_+$. The main contributions of the present work may be summarized by the following.

\begin{itemize}
  \item In Proposition~\ref{prop: log of relative wealth} we establish a pathwise formula for the relative wealth process associated to portfolios belonging to the space of controlled paths, as introduced in Definition~\ref{def: controlled path} below. This includes functionally generated portfolios commonly considered in SPT---as for instance in \cite{Strong2014,Schied2016,Karatzas2017,Ruf2019,Karatzas2020}---as well as the class which we refer to as \emph{functionally controlled portfolios}, which are portfolios of the form
  \begin{equation}\label{eq:genfuncintro}
    (\pi^F_t)^i = \mu_t^i \bigg(F^i(\mu_t) + 1 - \sum_{j=1}^d \mu_t^j F^j(\mu_t)\bigg),
  \end{equation}
  for some $F \in C^2(\overline{\Delta}^d_+;\mathbb{R}^d)$. Here, $(\pi^F)^i$ denotes the proportion of the current wealth invested in asset $i = 1, \dots, d$. In the case of functionally generated portfolios, i.e.~when $F$ is the logarithmic gradient of some real valued function, we also derive in Theorem~\ref{thm: master formula} a purely pathwise version of the classical master formula of SPT, cf.~\cite{Fernholz2002,Strong2014}.
  \item We introduce Cover's universal portfolio defined via a mixture portfolio based on the notion of controlled paths, and show that its appropriately scaled logarithmic relative wealth process converges in the long-run to that of the best retrospectively chosen portfolio; see Theorems~\ref{thm: Cover's theorem} and \ref{thm: Cover's theorem for generalized functionally generated portfolios}. This extends the results of \cite{Cuchiero2019} to a considerably larger class of investment strategies.
\item In Section~\ref{sec:Functionally controlled protfolios in probabilistic models}, we introduce a probabilistic setup where the dynamics of the market weights are described by a stochastic differential equation (SDE) driven by Brownian motion. Using the law of large numbers for the increments of the It\^o-rough path lift of Brownian motion, this setting allows to replace the scaling function of Theorem~\ref{thm: Cover's theorem for generalized functionally generated portfolios} by $1/T$. For this class of models we can thus prove that the asymptotic growth rates of Cover's universal portfolio and the best retrospectively chosen one are the same (see Theorem~\ref{thm:complogopt} (ii)). We also compare these two portfolios with the log-optimal one assuming additionally that the SDE for the market weights is ergodic. In this case the corresponding growth rates are all asymptotically equivalent, as shown in Theorem~\ref{thm:complogopt} (iii). This is analogous to the result in \cite{Cuchiero2019}, however now proved for the significantly larger class of functionally controlled portfolios.
\item We develop novel results in the theory of rough paths to allow for the pathwise treatment of portfolio theory. In particular, these results include an extension of \cite[Theorem~4.19]{Perkowski2016}, stating that the rough integral can be represented as a limit of left-point Riemann sums---see Theorem~\ref{thm: Ito integral for smooth transformed RIE path}---and the associativity of rough integration, exhibited in Section~\ref{subsec: associativity of rough integration}.
\end{itemize}

One important motivation for our work comes from classical considerations of the log-optimal portfolio in ergodic It{\^o} diffusion models for the market weights process. Indeed, this is one prominent example of an ``optimal'' portfolio that does not belong, in general, to the class of (generalized) functionally generated portfolios, but is still a functionally controlled portfolio of the form \eqref{eq:genfuncintro}; see Section~\ref{subsec:assgrowth}. As illustrated numerically in Figure~\ref{fig1}, the log-optimal portfolio (an example of a functionally controlled portfolio) might significantly outperform a corresponding ``best'' functionally generated portfolio. Indeed, the blue line illustrates the expected utility of the log-optimal portfolio over time, whereas the orange line depicts that of a certain best functionally generated portfolio. For the details of this example we refer to Section~\ref{subsec:comp}.
\begin{figure}\label{fig1}
  \centering
  \includegraphics[scale=1.0]{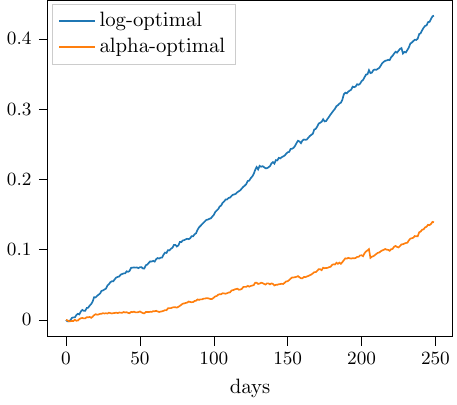}
   \caption{Expected utility of the log-optimal vs.~the alpha-optimal portfolio over time.}
\end{figure}
This indicates that going beyond functionally generated portfolios can have a substantial benefit. This holds true in particular for Cover's universal portfolio when defined as a mixture of portfolios of the form \eqref{eq:genfuncintro}, since in ergodic market models it asymptotically achieves the growth rate of the log-optimal portfolio (see Theorem~\ref{thm:complogopt}). Note that, due to the rough path approach, both the relative wealth processes obtained by investing according to the log-optimal portfolio and according to the universal portfolio make sense for every individual price trajectory. This also gives a theoretical justification for learning a (non-functionally generated) log-optimal portfolio from the observations of a single price path.

\medskip

\noindent\textbf{Outline:} In Section~\ref{sec:rough integration intro} we provide an overview of the essential concepts of rough paths and rough integration relevant for our financial application. In Section~\ref{sec:master formula} we introduce the pathwise description of the underlying financial market and study the growth of wealth processes relative to that of the market portfolio, which leads us to a pathwise master formula analogous to that of classical SPT. Section~\ref{sec:Cover portfolio} is dedicated to Cover's universal portfolio and to proving that its appropriately scaled asymptotic growth rate is equal to that of the best retrospectively chosen portfolio. In Section~\ref{sec:Functionally controlled protfolios in probabilistic models} we introduce a probabilistic setup and show under an ergodicity assumption that the asymptotic growth rate  coincides for Cover's universal portfolio, the best retrospectively chosen one and the log-optimal one. In this setting, we also compare the wealth processes of functionally controlled portfolios and functionally generated ones, illustrating their performance by means of a concrete numerical example. Appendices~\ref{sec:rough integration} and \ref{sec:appendix on RIE} collect findings concerning rough path theory and rough integration needed to establish the aforementioned results.

\medskip

\noindent\textbf{Acknowledgment:} A.~L.~Allan gratefully acknowledges financial support by the Swiss National Science Foundation via Project 200021\textunderscore 184647. C.~Cuchiero gratefully acknowledges financial support from the Vienna Science and Technology Fund (WWTF) under grant MA16-021 and by the Austrian Science Fund (FWF) through grant Y 1235 of the START-program. C.~Liu gratefully acknowledges support from the Early Postdoc.~Mobility Fellowship (No.~P2EZP2\textunderscore 188068) of the Swiss National Science Foundation, and from the G.~H.~Hardy Junior Research Fellowship in Mathematics awarded by New College, Oxford. The authors would also like to thank the anonymous referees for their valuable suggestions, which led to a significant improvement of the present paper.

\section{Rough integration for financial applications}\label{sec:rough integration intro}

In this section we provide the essential concepts from rough path theory for our applications in model-free portfolio theory. Additional results regarding rough integration are developed in the appendices. For more detailed introductions to rough path theory we refer to the books \cite{Lyons2002, Lyons2007, Friz2010, Friz2020}. Let us begin by introducing some basic notation commonly used in the theory of rough paths.

\subsection{Basic notation}

Let $(\R^d,|\,\cdot\,|)$ be standard Euclidean space and let $A \otimes B$ denote the tensor product of two vectors $A, B \in \R^d$, i.e.~the $d \times d$-matrix with $(i,j)$-component given by $[A \otimes B]^{ij} = A^i B^j$ for $1 \leq i, j \leq d$. The space of continuous paths $S \colon [0,T] \to \R^d$ is given by $C([0,T];\R^d)$, and $\|S\|_{\infty,[0,T]}$ denotes the supremum norm of $S$ over the interval $[0,T]$. For the increment of a path $S \colon [0,T] \to \R^d$, we use the standard shorthand notation
\begin{equation*}
  S_{s,t} := S_t - S_s, \qquad \text{for} \quad (s,t) \in \Delta_{[0,T]} := \big\{(u,v) \in [0,T]^2 : u \leq v\big\}.
\end{equation*}
For any partition $\mathcal{P} = \{0 = t_0 < t_1 < \dots < t_{N} = T\}$ of an interval $[0,T]$, we denote the mesh size of $\mathcal{P}$ by $|\mathcal{P}| := \max \{|t_{k+1} - t_k| : k = 0, 1, \dots, N-1\}$. A \emph{control function} is defined as a function $c \colon \Delta_{[0,T]} \to [0, \infty)$ which is superadditive, in the sense that $c(s,u) + c(u,t) \leq c(s,t)$ for all $0 \leq s \leq u \leq t \leq T$. For $p \in [1,\infty)$, the $p$-variation of a path $S \in C([0,T];\R^d)$ over the interval $[s,t]$ is defined by
\begin{equation*}
  \|S\|_{p,[s,t]} := \sup_{\mathcal{P} \subset [s,t]} \bigg(\sum_{[u,v] \in \mathcal{P}} |S_{u,v}| ^p\bigg)^{\hspace{-1pt}\frac{1}{p}},
\end{equation*}
where the supremum is taken over all finite partitions $\mathcal{P}$ of the interval $[s,t]$, and we use the abbreviation $\|S\|_{p} := \|S\|_{p,[0,T]}$. We say that $S$ has finite $p$-variation if $\|S\|_{p} < \infty$, and we denote the space of continuous paths with finite $p$-variation by $C^{p\textup{-var}}([0,T];\R^d)$. Note that $S$ having finite $p$-variation is equivalent to the existence of a control function $c$ such that $|S_{s,t}|^p \leq c(s,t)$ for all $(s,t) \in \Delta_{[0,T]}$. (For instance, one can take $c(s,t) = \|S\|_{p,[s,t]}^p$.) Moreover, for a two-parameter function $\mathbb{S}\colon \Delta_{[0,T]} \to \R^{d\times d}$ we introduce the corresponding notion of $p$-variation by
\begin{equation*}
  \|\mathbb{S}\|_{p,[s,t]} := \sup_{\mathcal{P} \subset [s,t]} \bigg(\sum_{[u,v] \in \mathcal{P}} |\mathbb{S}_{u,v}|^p\bigg)^{\hspace{-1pt}\frac{1}{p}},
\end{equation*}
for $p \in [1,\infty)$.

\smallskip

Given a $k \in \N$ and a domain $A \subseteq \R^d$, we will write $f \in C^k(A;\mathbb{R}^d)$, or sometimes simply $f \in C^k$, to indicate that a function $f$ defined on $A$ with values in $\R^d$ is $k$-times continuously differentiable (seen as restriction of $C^k$-functions on $\R^d$ if $A$ is closed), and we will make use of the associated norm
\begin{equation*}
  \|f\|_{C^k} := \max_{0 \leq n \leq k} \|\D^nf\|_{\infty},
\end{equation*}
where $\D^nf$ denotes the $n$\textsuperscript{th} order derivative of $f$, and $\|\,\cdot\,\|_{\infty}$ denotes the supremum norm.

For a $k \in \N$ and $\gamma \in (0,1]$, we will write $f \in C^{k + \gamma}(A;\R^d)$, or just $f \in C^{k + \gamma}$, to mean that a function $f$ defined on $A$ is $k$-times continuously differentiable (in the Fr\'echet sense), and that its $k$\textsuperscript{th} order derivative $\D^kf$ is locally $\gamma$-H{\"o}lder continuous. In this case we use the norm
\begin{equation*}
  \|f\|_{C^{k + \gamma}} := \max_{0 \leq n \leq k} \|\D^nf\|_{\infty} + \|\D^kf\|_{\gamma\textup{-H\"ol}},
\end{equation*}
where $\|\,\cdot\,\|_{\gamma\textup{-H\"ol}}$ denotes the $\gamma$-H{\"o}lder norm.

Finally, given two vector spaces $U, V$, we write $\mathcal{L}(U;V)$ for the space of linear maps from $U$ to $V$.

\medskip

Let $(E,\|\cdot\|)$ be a normed space and let $f, g \colon E \to \R$ be two functions. We shall write $f \lesssim g$ or $f \leq Cg$ to mean that there exists a constant $C > 0$ such that $f(x) \leq C g(x)$ for all $x \in E$. Note that the value of such a constant may change from line to line, and that the constants may depend on the normed space, e.g.~through its dimension or regularity parameters.

\subsection{Rough path theory and Property (RIE)}\label{subsec: RIE}

Let us briefly recall the fundamental definitions of a rough path and of a controlled path, which allow to set up rough integration.

\begin{definition}
  For $p \in (2,3)$, a \emph{$p$-rough path} is defined as a pair $\bS = (S,\S)$, consisting of a continuous path $S \colon [0,T] \to \R^d$ and a continuous two-parameter function $\S \colon \Delta_{[0,T]} \to \R^{d \times d}$, such that $\|S\|_{p} < \infty$, $\|\S\|_{p/2} < \infty$, and Chen's relation
  \begin{equation}\label{eq:Chens relation}
    \S_{s,t} = \S_{s,u} + \S_{u,t} + S_{s,u} \otimes S_{u,t}
  \end{equation}
  holds for all $0 \leq s \leq u \leq t \leq T$.
\end{definition}

\begin{remark} 
  The success of rough path theory in probability theory is based on the observation that sample paths of many important stochastic processes such as Brownian motion, semimartingales and Markov processes can be enhanced to a rough path, by defining the ``enhancement'' $\S$ via stochastic integration; see e.g.~\cite[Part~III]{Friz2010}.
\end{remark}

\begin{definition}\label{def: controlled path}
  Let $p \in (2,3)$ and $q \geq p$ be such that $2/p + 1/q > 1$, and let $r > 1$ be such that $1/r = 1/p + 1/q$. Let $S \in C^{p\textup{-var}}([0,T];\R^d)$, $F \colon [0,T] \to \R^d$ and $F' \colon [0,T] \to \mathcal{L}(\R^d;\R^d)$ be continuous paths. The pair $(F,F')$ is called a \emph{controlled path} with respect to $S$ (or an \emph{$S$-controlled path}), if the \emph{Gubinelli derivative} $F'$ has finite $q$-variation, and the \emph{remainder} $R^F$ has finite $r$-variation, where $R^F \colon \Delta_{[0,T]} \to \R^d$ is defined implicitly by the relation
  \begin{equation*}
    F_{s,t} = F'_s S_{s,t} + R^F_{s,t} \qquad \text{for} \quad (s,t) \in \Delta_{[0,T]}.
  \end{equation*}
  We denote the space of controlled paths with respect to $S$ by $\crpSq = \crpSq([0,T];\R^d)$, which becomes a Banach space when equipped with the norm
  \begin{equation*}
    \|F,F'\|_{\crpSq,[0,T]} := |F_0| + |F'_0| + \|F'\|_{q,[0,T]} + \|R^F\|_{r,[0,T]}.
  \end{equation*}
\end{definition}

\begin{example}\label{ex: controlled path 1+epsilon}
  For a path $S \in C^{p\textup{-var}}([0,T];\R^d)$ with $p \in (2,3)$, the prototypical example of a controlled path is $(f(S),\D f(S)) \in \crpSq$ for any $f \in C^{1 + \epsilon}$ with $\epsilon \in (p - 2,1]$ and $q = p/\epsilon$. Examples of more general controlled paths are discussed in Remark~\ref{remark: functionally generalized portfolio} and Section~\ref{subsec:base set of universal protfolio} in the context of universal portfolios.
\end{example}


Based on the above definitions, one can establish the existence of the rough integral of a controlled path $(F,F')$ with respect to a $p$-rough path $\bS$. See \cite{Friz2020} for the corresponding theory presented in terms of H{\"o}lder regularity. The following formulation of rough integration in the language of $p$-variation can be found in e.g.~\cite[Theorem~4.9]{Perkowski2016}.

\begin{theorem}[Rough integration]\label{thm: rough integral exists}
  Let $p \in (2,3)$ and $q \geq p$ be such that $2/p + 1/q > 1$, and let $r > 1$ be such that $1/r = 1/p + 1/q$. Let $\bS = (S,\S)$ be a $p$-rough path and let $(F,F') \in \crpSq$ be a controlled path with remainder $R^F$. Then the limit 
  \begin{equation}\label{eq: rough integration}
    \int_0^T F_u \dd \bS_u := \lim_{|\mathcal{P}| \to 0} \sum_{[s,t] \in \mathcal{P}} F_s S_{s,t} + F'_s \S_{s,t}
  \end{equation}
  exists along every sequence of partitions $\mathcal{P}$ of the interval $[0,T]$ with mesh size $|\mathcal{P}|$ tending to zero, and takes values in $\mathbb{R}$. We call this limit the \emph{rough integral} of $(F,F')$ against $\bS$. Here, the product $F_s S_{s,t}$ is understood as the Euclidean inner product, and the product $F'_s \S_{s,t}$ also takes values in $\R$ since the derivative $F'$ takes values in $\mathcal{L}(\R^d;\R^d) \cong \mathcal{L}(\R^{d \times d};\R)$. Moreover, we have the estimate
  \begin{equation}\label{eq:rough integral estimate}
    \bigg|\int_s^t F_u \dd \bS_u - F_s S_{s,t} - F'_s \S_{s,t}\bigg| 
    \leq C\big(\|R^F\|_{r,[s,t]} \|S\|_{p,[s,t]} + \|F'\|_{q,[s,t]} \|\S\|_{\frac{p}{2},[s,t]}\big),
  \end{equation}
  where the constant~$C$ depends only on $p,q$ and~$r$.
\end{theorem}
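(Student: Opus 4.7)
The plan is to realize the integral as the limit of compensated Riemann sums via a sewing-lemma argument. Set
$$\Xi_{s,t} := F_s S_{s,t} + F'_s \S_{s,t}, \qquad (s,t) \in \Delta_{[0,T]},$$
so that the right-hand side of \eqref{eq: rough integration} is exactly $\sum_{[s,t] \in \mathcal{P}} \Xi_{s,t}$. The sewing strategy converges provided $\Xi$ is almost additive in a quantitative sense. First I would compute the defect $\delta\Xi_{s,u,t} := \Xi_{s,t} - \Xi_{s,u} - \Xi_{u,t}$. Combining $S_{s,t} = S_{s,u} + S_{u,t}$ with Chen's relation \eqref{eq:Chens relation} (rewritten as $\S_{s,t} - \S_{s,u} = \S_{u,t} + S_{s,u} \otimes S_{u,t}$) and with the controlled-path decomposition $F_{s,u} = F'_s S_{s,u} + R^F_{s,u}$, the cross terms cancel to produce the central identity
$$\delta\Xi_{s,u,t} = - R^F_{s,u} S_{u,t} - F'_{s,u} \S_{u,t}.$$

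Next I would turn this identity into a sewing estimate. Choose control functions $c_{R^F}, c_{F'}, c_S, c_\S$ dominating $|R^F|^r$, $|F'|^q$, $|S|^p$, $|\S|^{p/2}$ respectively (for instance, the corresponding variation norms raised to the relevant powers). Using superadditivity ($c(s,u) \le c(s,t)$ and $c(u,t) \le c(s,t)$) together with the elementary bound $a^{1/r} b^{1/p} \le (a+b)^{1/r + 1/p}$, one obtains
$$|\delta\Xi_{s,u,t}| \le 2\,\omega(s,t)^{\theta}, \qquad \omega := c_{R^F} + c_{F'} + c_S + c_\S,$$
with exponent $\theta := 1/r + 1/p = 2/p + 1/q$, which is strictly greater than $1$ by the standing assumption. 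The sewing lemma in its $p$-variation formulation then produces a unique continuous additive map $\mathcal{I} \colon \Delta_{[0,T]} \to \R$ satisfying $|\mathcal{I}_{s,t} - \Xi_{s,t}| \lesssim \omega(s,t)^{\theta}$, obtained precisely as the limit of $\sum_{[u,v] \in \mathcal{P}} \Xi_{u,v}$ as $|\mathcal{P}| \to 0$. Defining $\int_s^t F_u \dd \bS_u := \mathcal{I}_{s,t}$ and separating the two contributing pieces of $\omega^\theta$ back into their original product form recovers \eqref{eq:rough integral estimate}.

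The main obstacle is the algebraic verification of the defect identity, which relies on a precise cancellation between the quadratic term in Chen's relation and the linear part of the controlled-path decomposition; here $F'_s \in \mathcal{L}(\R^d;\R^d) \cong \mathcal{L}(\R^{d \times d};\R)$ must act consistently on both $S_{s,t}$ and $\S_{s,t}$, and a careless identification destroys the cancellation. The remainder of the argument is standard, and one may alternatively invoke \cite[Theorem~4.9]{Perkowski2016} directly, in which case the theorem reduces to a translation of the hypotheses.
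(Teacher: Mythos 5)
Your proposal is correct and follows the standard sewing-lemma route. Note that the paper does not actually prove this theorem: it states it and cites \cite[Theorem~4.9]{Perkowski2016}. Your argument is therefore a genuine self-contained proof of a result the paper takes as given, and it is consistent in spirit with the paper's own proof of the more general Lemma~\ref{lem: int of controlled paths exists}, which also goes through a defect computation followed by an appeal to the sewing lemma of \cite{Friz2018}.

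Two small points. First, the defect identity
$\delta\Xi_{s,u,t} = -R^F_{s,u} S_{u,t} - F'_{s,u}\S_{u,t}$
is indeed the special case $G = S$ (so $G' = I$, $R^G = 0$) of the paper's identity~\eqref{eq:delta Xi int of controlled paths}, and the cancellation $F'_s S_{s,u} S_{u,t} = F'_s(S_{s,u}\otimes S_{u,t})$ does hold under the stated identification $\mathcal{L}(\R^d;\R^d) \cong \mathcal{L}(\R^{d\times d};\R)$ (both equal $\sum_{i,j}(F'_s)^{ij}S^j_{s,u}S^i_{u,t}$, given the convention $\S^{ij}_{s,t} = \int_s^t S^i_u\,\d S^j_u - S^i_s S^j_{s,t}$); you were right to flag this as the delicate spot. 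Second, the final step — bundling the four controls into a single $\omega$, getting $|\delta\Xi_{s,u,t}|\le 2\omega(s,t)^\theta$, and then ``separating the pieces back'' to obtain the two-term estimate~\eqref{eq:rough integral estimate} — is a little loose: the single-control version of the sewing lemma only hands you $|\mathcal{I}_{s,t}-\Xi_{s,t}|\lesssim\omega(s,t)^\theta$, from which the two-term product form does not directly follow. The clean fix is to keep the two bounds
$|R^F_{s,u}S_{u,t}|\le w_{R^F}(s,u)^{1/r}w_S(u,t)^{1/p}$ and $|F'_{s,u}\S_{u,t}|\le w_{F'}(s,u)^{1/q}w_\S(u,t)^{2/p}$
separate and invoke the generalized sewing lemma \cite[Theorem~2.5]{Friz2018}, which handles the sum of such bilinear bounds directly and outputs exactly the two-term estimate~\eqref{eq:rough integral estimate}; this is precisely the device the paper uses in the proof of Lemma~\ref{lem: int of controlled paths exists}. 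Similarly, convergence along arbitrary partitions with vanishing mesh (not just along refinements) is part of the conclusion of that same $p$-variation sewing lemma, so your claim is covered once you cite it.
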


In Theorem~\ref{thm: rough integral exists} we defined the rough integral of a controlled path $(F,F')$ against a rough path $\bS = (S,\S)$. As noted in \cite[Remark~4.12]{Friz2020}, one can actually define a more general integral of a controlled path $(F,F')$ against another controlled path $(G,G')$.

\begin{lemma}\label{lem: int of controlled paths exists}
  Let $\bS = (S,\S)$ be a $p$-rough path, and let $(F,F'), (G,G') \in \crpSq$ be two controlled paths with remainders $R^F$ and $R^G$, respectively. Then the limit
  \begin{equation}\label{eq:int of controlled paths defn}
    \int_0^T F_u \dd G_u := \lim_{|\mathcal{P}| \to 0} \sum_{[s,t] \in \mathcal{P}} F_s G_{s,t} + F'_s G'_s \S_{s,t}
  \end{equation}
  exists along every sequence of partitions~$\mathcal{P}$ of the interval $[0,T]$ with mesh size $|\mathcal{P}|$ tending to zero, and comes with the estimate
  \begin{align}\label{eq:est int of controlled paths}
    \begin{split}
    &\bigg|\int_s^t F_u \,\d G_u - F_s G_{s,t} - F'_s G'_s \S_{s,t}\bigg|\\
    &\quad \leq C\Big(\|F'\|_\infty (\|G'\|_{q,[s,t]}^q + \|S\|_{p,[s,t]}^p)^{\frac{1}{r}} \|S\|_{p,[s,t]}+ \|F\|_{p,[s,t]} \|R^G\|_{r,[s,t]} \\
    &\quad\quad\quad\quad+ \|R^F\|_{r,[s,t]} \|G'\|_\infty \|S\|_{p,[s,t]} + \|F'G'\|_{q,[s,t]} \|\S\|_{\frac{p}{2},[s,t]}\Big),
    \end{split}
  \end{align}
  where the constant $C$ depends only on $p, q$ and $r$.
\end{lemma}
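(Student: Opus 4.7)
The strategy is a direct application of the sewing lemma, following the same technique underlying Theorem~\ref{thm: rough integral exists}. I would introduce the compensated germ
\begin{equation*}
\Xi_{s,t} := F_s G_{s,t} + F'_s G'_s \S_{s,t}, \qquad (s,t) \in \Delta_{[0,T]},
\end{equation*}
as the natural second-order local approximation of $\int_s^t F_u \dd G_u$, and reduce existence of the limit \eqref{eq:int of controlled paths defn} to checking that $\delta\Xi_{s,u,t} := \Xi_{s,t} - \Xi_{s,u} - \Xi_{u,t}$ satisfies $|\delta\Xi_{s,u,t}| \leq c\, \omega(s,t)^{\alpha}$ for some control function $\omega$ and some $\alpha > 1$.

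First I would compute $\delta\Xi$. Using the additivity $G_{s,t} = G_{s,u} + G_{u,t}$ together with Chen's relation \eqref{eq:Chens relation} for $\S$,
\begin{equation*}
\delta\Xi_{s,u,t} = -F_{s,u} G_{u,t} + F'_s G'_s \, S_{s,u} \otimes S_{u,t} + (F'_s G'_s - F'_u G'_u)\S_{u,t}.
\end{equation*}
Inserting the controlled-path expansions $F_{s,u} = F'_s S_{s,u} + R^F_{s,u}$ and $G_{u,t} = G'_u S_{u,t} + R^G_{u,t}$ into the first term, and then combining the resulting leading cross term $-F'_s S_{s,u} G'_u S_{u,t}$ with the Chen cross term $F'_s G'_s S_{s,u} \otimes S_{u,t}$, a $q$-variation factor $(G'_s - G'_u)$ emerges; after regrouping,
\begin{equation*}
\delta\Xi_{s,u,t} = F'_s(G'_s - G'_u)\, S_{s,u} \otimes S_{u,t} \;-\; F_{s,u} R^G_{u,t} \;-\; R^F_{s,u} G'_u S_{u,t} \;+\; (F'_s G'_s - F'_u G'_u)\S_{u,t}.
\end{equation*}

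These four terms match one-to-one the four summands on the right-hand side of \eqref{eq:est int of controlled paths}. I would bound each by H\"older-type estimates for control functions: the first by $\|F'\|_\infty(\|G'\|_{q,[s,t]}^q + \|S\|_{p,[s,t]}^p)^{1/r}\|S\|_{p,[s,t]}$, using $1/r = 1/p + 1/q$ together with the standard inequality $\omega_1^{1/q} \omega_2^{1/p} \leq (\omega_1 + \omega_2)^{1/r}$ valid for any controls $\omega_1, \omega_2$; the second by $\|F\|_{p,[s,t]}\|R^G\|_{r,[s,t]}$; the third by $\|R^F\|_{r,[s,t]} \|G'\|_\infty \|S\|_{p,[s,t]}$; and the fourth by $\|F'G'\|_{q,[s,t]}\|\S\|_{\frac{p}{2},[s,t]}$. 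In each case the total exponent of the resulting control is $2/p + 1/q > 1$ or $1/p + 1/r = 2/p + 1/q > 1$, so the sewing lemma applies, yielding the limit \eqref{eq:int of controlled paths defn} along any sequence of partitions with mesh tending to zero, together with the companion estimate \eqref{eq:est int of controlled paths}.

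The main obstacle is purely algebraic: without exploiting the cancellation between the Chen cross term $F'_s G'_s S_{s,u} \otimes S_{u,t}$ and the leading cross term $-F'_s S_{s,u} G'_u S_{u,t}$ arising from $-F_{s,u} G_{u,t}$, in order to extract the $q$-variation factor $(G'_s - G'_u)$, one is left with a contribution carrying two factors of $\|S\|_p$ and no Gubinelli-derivative factor, which fails to be summable for $p \in (2,3)$ (since $2/p \leq 1$). Recognising and carrying out this cancellation is the crucial step; everything else is routine bookkeeping of variation exponents under the standing assumptions $2/p + 1/q > 1$ and $1/r = 1/p + 1/q$.
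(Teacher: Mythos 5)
Your proposal is correct and follows essentially the same route as the paper: introduce the germ $\Xi_{s,t} = F_s G_{s,t} + F'_s G'_s \S_{s,t}$, compute $\delta\Xi$ via Chen's relation and the controlled-path decompositions (your expression $F'_s(G'_s-G'_u)S_{s,u}\otimes S_{u,t} - F_{s,u}R^G_{u,t} - R^F_{s,u}G'_u S_{u,t} + (F'_s G'_s - F'_u G'_u)\S_{u,t}$ is, up to sign convention for increments, exactly the paper's \eqref{eq:delta Xi int of controlled paths}), bound each term by a control raised to a power exceeding $1$ using $1/r = 1/p + 1/q$ and $2/p + 1/q > 1$, and invoke the generalized sewing lemma. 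The cancellation you flag as the key step is indeed the crux of the algebra, and the four resulting terms map one-to-one onto the four summands in \eqref{eq:est int of controlled paths}.
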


\begin{proof}
  Set $\Xi_{s,t} := F_s G_{s,t} + F'_s G'_s \S_{s,t}$ and $\delta \Xi_{s,u,t} := \Xi_{s,t} - \Xi_{s,u} - \Xi_{u,t}$ for $0 \leq s \leq u \leq t \leq T$. Using Chen's relation \eqref{eq:Chens relation}, one can show that
  \begin{equation}\label{eq:delta Xi int of controlled paths}
    \delta \Xi_{s,u,t} = -F'_s G'_{s,u} S_{s,u} S_{u,t} - F_{s,u} R^G_{u,t} - R^F_{s,u} G'_u S_{u,t} - (F'G')_{s,u} \S_{u,t}.
  \end{equation}
  Since $1/r = 1/p + 1/q$, Young's inequality gives
  \begin{align*}
    |-F'_s G'_{s,u} S_{s,u} S_{u,t}| 
    &\leq \|F'\|_\infty \|G'\|_{q,[s,u]} \|S\|_{p,[s,u]} \|S\|_{p,[u,t]}\\
    &\lesssim \|F'\|_\infty (\|G'\|_{q,[s,u]}^q + \|S\|_{p,[s,u]}^p)^{\frac{1}{r}} \|S\|_{p,[u,t]} = w_1(s,u)^{\frac{1}{r}} w_2(u,t)^{\frac{1}{p}},
  \end{align*}
  where $w_1(s,u) := \|F'\|_\infty^r (\|G'\|_{q,[s,u]}^q + \|S\|_{p,[s,u]}^p)$ and $w_2(u,t) := \|S\|_{p,[u,t]}^p$ are control functions. Treating the other three terms on the right-hand side of \eqref{eq:delta Xi int of controlled paths} similarly, we deduce the hypotheses of the generalized sewing lemma \cite[Theorem~2.5]{Friz2018}, from which the result follows.
\end{proof}

Rough integration offers strong pathwise stability estimates, and may be viewed as arguably the most general pathwise integration theory, generalizing classical notions of integration such as those of Riemann--Stieltjes, Young and F{\"o}llmer, and allowing one to treat many well-known stochastic processes as integrators; see e.g.~\cite{Friz2020}. However, from the perspective of mathematical finance, rough integration comes with one apparent flaw: the definition of rough integral \eqref{eq: rough integration} is based on so-called ``compensated'' Riemann sums, and thus does not (at first glance) come with the natural interpretation as the capital gain process associated to an investment in a financial market. Indeed, let us suppose that $S$ represents the asset prices on a financial market and $F$ an investment strategy. In this case, neither the associated rough path $\bS = (S,\S)$ nor the controlled path $(F,F^\prime)$, assuming they exist, are uniquely determined by $S$ and $F$, but rather the value of the rough integral $\int_0^T F_u \dd \bS_u$ will depend in general on the choices of $\S$ and $F^\prime$. Moreover, the financial meaning of the term $F^\prime_s \S_{s,t}$ appearing in the compensated Riemann sum in \eqref{eq: rough integration} is far from obvious.

\smallskip

As observed in \cite{Perkowski2016}, the aforementioned drawback of rough integration from a financial perspective can be resolved by introducing the following property of the price path $S$.

\begin{property}[\textbf{RIE}]
  Let $p\in (2,3)$ and let $\mathcal{P}^n = \{0 = t^n_0 < t^n_1 < \cdots < t^n_{N_n} = T\}$, $n \in \N$, be a sequence of partitions of the interval $[0,T]$, such that $|\mathcal{P}^n| \to 0$ as $n \to \infty$. For $S \in C([0,T];\R^d)$, we define $S^n \colon [0,T] \to \R^d$ by
  \begin{equation*}
    S^n_t := S_T \1_{\{T\}}(t) + \sum_{k=0}^{N_n - 1} S_{t^n_k} \1_{[t^n_k,t^n_{k+1})}(t), \qquad t \in [0,T],
  \end{equation*}
  for each $n \in \N$. We assume that:
  \begin{itemize}
    \item the Riemann sums $\int_0^t S^n_u \otimes \d S_u := \sum_{k=0}^{N_n-1} S_{t^n_k} \otimes S_{t^n_k \wedge t,t^n_{k+1} \wedge t}$ converge uniformly as $n \to \infty$ to a limit, which we denote by $\int_0^t S_u \otimes \d S_u$, $t \in [0,T]$,
    \item and that there exists a control function $c$ such that\footnote{Here and throughout, we adopt the convention that $\frac{0}{0} := 0$.}
    \begin{equation*}
      \sup_{(s,t) \in \Delta_{[0,T]}} \frac{|S_{s,t}|^p}{c(s,t)} + \sup_{n \in \N} \, \sup_{0 \leq k < \ell \leq N_n} \frac{\big|\int_{t^n_k}^{t^n_\ell} S^n_u \otimes \d S_u - S_{t^n_k}\otimes S_{t^n_k,t^n_\ell}\big|^{\frac{p}{2}}}{c(t^n_k,t^n_\ell)} \leq 1.
    \end{equation*}
  \end{itemize}
\end{property}

\begin{definition}
  A path $S\in C([0,T];\R^d)$ is said to satisfy \textup{(RIE)} with respect to $p$ and $(\mathcal{P}^n)_{n \in \N}$, if $p$, $(\mathcal{P}^n)_{n \in \N}$ and $S$ together satisfy Property \textup{(RIE)}.
\end{definition}

As discussed in detail in \cite{Perkowski2016}, if a path $S \in C([0,T];\R^d)$ satisfies \textup{(RIE)} with respect to $p$ and $(\mathcal{P}^n)_{n \in \N}$, then $S$ can be enhanced to a $p$-rough path $\bS = (S,\S)$ by setting
\begin{equation}\label{eq:defn A st}
  \S_{s,t} := \int_s^t S_u \otimes \d S_u - S_s \otimes S_{s,t}, \qquad \text{for} \quad (s,t) \in \Delta_{[0,T]}.
\end{equation}
In other words, Property~\textup{(RIE)} ensures the existence of a rough path associated to the path~$S$. The advantage of the (more restrictive) Property~\textup{(RIE)} is that it guarantees that the corresponding rough integrals can be well approximated by classical left-point Riemann sums, as we will see in Section~\ref{subsec: rough integrals as Riemann sums}, thus allowing us to restore the financial interpretation of such integrals as capital processes.

\begin{remark}\label{remark: financial models satisfy RIE}
  The assumption that the underlying price paths satisfy Property~\textup{(RIE)} appears to be rather natural in the context of portfolio theory. Indeed, in stochastic portfolio theory the price processes are commonly modelled as semimartingales fulfilling the condition of ``no unbounded profit with bounded risk'' (NUPBR); see e.g.~\cite{Fernholz2002}. The condition (NUPBR) is also essentially the minimal condition required to ensure that expected utility maximization problems are well-posed; see \cite{Karatzas2007,Imkeller2015}. As established in \cite[Proposition~2.7 and Remark~4.16]{Perkowski2016}, the sample paths of semimartingales fulfilling (NUPBR) almost surely satisfy Property~\textup{(RIE)} with respect to every $p \in (2,3)$ and a suitably chosen sequence of partitions.
\end{remark}

\subsection{The bracket process and a rough It{\^o} formula}

A vital tool in many applications of stochastic calculus is It{\^o}'s formula, and it will also be an important ingredient in our contribution to portfolio theory. Usually, (pathwise) It{\^o} formulae are based on the notion of quadratic variation. In rough path theory, a similar role as that of the quadratic variation is played by the so-called bracket of a rough path, cf.~\cite[Definition~5.5]{Friz2020}.

\begin{definition}\label{def: bracket}
  Let $\bS = (S,\S)$ be a $p$-rough path and let $\textup{Sym}(\S)$ denote the symmetric part of $\S$. The \emph{bracket} of $\bS$ is defined as the path $[\bS] \colon [0,T] \to \R^{d \times d}$ given by
  \begin{equation*}
    [\bS]_t := S_{0,t} \otimes S_{0,t} - 2\textup{Sym}(\S_{0,t}), \qquad t \in [0,T].
  \end{equation*}
\end{definition}

The bracket of a rough path allows one to derive It{\^o} formulae for rough paths. For this purpose, note that $[\bS]$ is a continuous path of finite $p/2$-variation, which can be seen from the observation that
\begin{equation*}
  [\bS]_{s,t} = [\bS]_t - [\bS]_s = S_{s,t} \otimes S_{s,t} - 2\textup{Sym}(\S_{s,t}), \qquad \text{for all} \quad (s,t) \in \Delta_{[0,T]}.
\end{equation*}
The following It{\^o} formula for rough paths can be proven almost exactly as the one in \cite[Theorem~7.7]{Friz2020}, so we will omit its proof here; see also \cite[Theorem~2.12]{Friz2018}.

\begin{proposition}\label{prop: general Ito formula for rough paths}
  Let $\bS = (S,\S)$ be a $p$-rough path and let $\Gamma \in C^{\frac{p}{2}\textup{-var}}([0,T];\R^d)$. Suppose that $F, F'$ and $F''$ are such that $(F,F'), (F',F'') \in \crpSq$, and $F = \int_0^\cdot F'_u \dd \bS_u + \Gamma$. If $g \in C^{p + \epsilon}$ for some $\epsilon > 0$, then, for every $t \in [0,T]$, we have
  \begin{equation*}
    g(F_t) = g(F_0) + \int_0^t \D g(F_u) F'_u \dd \bS_u + \int_0^t \D g(F_u) \dd \Gamma_u + \frac{1}{2} \int_0^t \D^2g(F_u) (F'_u \otimes F'_u) \dd [\bS]_u.
  \end{equation*}
\end{proposition}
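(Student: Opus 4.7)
The plan is to follow the standard Taylor-expansion approach for rough It\^o formulas, mirroring \cite[Theorem~7.7]{Friz2020} but phrased in the $p$-variation setting. Fix $t \in [0,T]$ and a sequence of partitions $\mathcal{P}^n$ of $[0,t]$ with $|\mathcal{P}^n| \to 0$. I start from the telescoping identity $g(F_t) - g(F_0) = \sum_{[s,u] \in \mathcal{P}^n}(g(F_u) - g(F_s))$ and, using the $(p+\epsilon-2)$-H\"older regularity of $\D^2 g$, the second-order Taylor formula gives
\begin{equation*}
g(F_u) - g(F_s) = \D g(F_s) F_{s,u} + \tfrac{1}{2} \D^2 g(F_s)(F_{s,u} \otimes F_{s,u}) + \rho^{(1)}_{s,u},
\end{equation*}
with $|\rho^{(1)}_{s,u}| \lesssim |F_{s,u}|^{p+\epsilon} \leq c(s,u)^{1 + \epsilon/p}$ for a suitable control $c$. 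The sum of these remainders over $\mathcal{P}^n$ vanishes as $|\mathcal{P}^n| \to 0$ by superadditivity of $c$.

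To identify the main terms, I would insert the controlled-path structure for $F$. Since $F = \int_0^\cdot F'_v \dd \bS_v + \Gamma$ and $(F',F'') \in \crpSq$, applying the rough integral estimate \eqref{eq:rough integral estimate} to $\int_s^u F'_v \dd \bS_v$ gives
\begin{equation*}
F_{s,u} = F'_s S_{s,u} + F''_s \S_{s,u} + \Gamma_{s,u} + \rho^{(2)}_{s,u},
\end{equation*}
with $\rho^{(2)}_{s,u}$ dominated by a control raised to a power strictly above one (namely, the exponents $1/r + 1/p$ and $1/q + 2/p$, both exceeding $1$ by the standing assumption $2/p + 1/q > 1$). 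A similar expansion yields $F_{s,u} \otimes F_{s,u} = (F'_s \otimes F'_s)(S_{s,u} \otimes S_{s,u}) + \rho^{(3)}_{s,u}$. Using the identity $S_{s,u} \otimes S_{s,u} = [\bS]_{s,u} + 2\textup{Sym}(\S_{s,u})$ from Definition~\ref{def: bracket}, together with the symmetry of $\D^2 g$ to replace $\textup{Sym}(\S_{s,u})$ by $\S_{s,u}$ when contracted against $\D^2 g(F_s)(F'_s \otimes F'_s)$, the Taylor expansion regroups as
\begin{align*}
g(F_u) - g(F_s) = {} & \D g(F_s) F'_s S_{s,u} + \big[\D^2 g(F_s)(F'_s \otimes F'_s) + \D g(F_s) F''_s\big] \S_{s,u} \\
& + \D g(F_s) \Gamma_{s,u} + \tfrac{1}{2} \D^2 g(F_s)(F'_s \otimes F'_s) [\bS]_{s,u} + (\text{h.o.t.}).
\end{align*}

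By a Leibniz rule for controlled paths, the pair $\bigl(\D g(F) F',\, \D^2 g(F)(F' \otimes F') + \D g(F) F''\bigr)$ lies in $\crpSq$, so by Theorem~\ref{thm: rough integral exists} the first two summed terms converge to $\int_0^t \D g(F_v) F'_v \dd \bS_v$. The remaining two are left-point Riemann sums for the Young integrals $\int_0^t \D g(F_v) \dd \Gamma_v$ and $\tfrac{1}{2}\int_0^t \D^2 g(F_v)(F'_v \otimes F'_v) \dd [\bS]_v$, both well defined because $\D g(F)$ and $\D^2 g(F)(F' \otimes F')$ have finite $p$-variation while $\Gamma$ and $[\bS]$ have finite $p/2$-variation, so that $3/p > 1$. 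The main obstacle is the bookkeeping required to verify that every ``h.o.t.'' remainder sums to zero: each such term is a product of increments drawn from $F'$, $F''$, $R^F$, $S$, $\S$, $\Gamma$ and $[\bS]$, and one must bound each product by $w(s,u)^\theta$ with $\theta > 1$, exploiting the conditions $2/p + 1/q > 1$ and $p < 3$. This is exactly the generalized sewing estimate of \cite[Theorem~2.5]{Friz2018} already invoked in the proof of Lemma~\ref{lem: int of controlled paths exists}, so the remainder control can be imported essentially verbatim.
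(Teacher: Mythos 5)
Your argument correctly reconstructs the Taylor-expansion-plus-sewing proof of \cite[Theorem~7.7]{Friz2020} in the $p$-variation setting, which is precisely the route the paper itself references (the paper omits the proof, stating the proposition ``can be proven almost exactly as the one in \cite[Theorem~7.7]{Friz2020}''; see also \cite[Theorem~2.12]{Friz2018}). Aside from cosmetic imprecisions---for $p+\epsilon>3$ the Taylor remainder is $O(|F_{s,u}|^3)$ rather than $O(|F_{s,u}|^{p+\epsilon})$, and $\D^2 g(F)(F'\otimes F')$ in general only has finite $\max\bigl(q,\,p/(p+\epsilon-2)\bigr)$-variation rather than finite $p$-variation, though Young integrability against the $p/2$-variation path $[\bS]$ still holds since $\epsilon>0$ and $2/p+1/q>1$---your sketch is sound and matches the cited approach.
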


Assuming Property~\textup{(RIE)}, it turns out that the bracket $[\bS]$ of a rough path $\bS = (S,\S)$ does coincide precisely with the quadratic variation of the path $S$ in the sense of F{\"o}llmer~\cite{Follmer1981}.

\begin{lemma}\label{lem: property of bracket of rough paths}
  Suppose that $S\in C([0,T];\R^d)$ satisfies \textup{(RIE)} with respect to $p$ and $(\mathcal{P}^n)_{n \in \N}$. Let $\bS = (S,\S)$ be the associated rough path as defined in \eqref{eq:defn A st}. Then, the bracket $[\bS]$ has finite total variation, and is given by
  \begin{equation*}
    [\bS]_t = \lim_{n \to \infty} \sum_{k=0}^{N_n-1} S_{t^n_k \wedge t,t^n_{k+1} \wedge t} \otimes S_{t^n_k \wedge t,t^n_{k+1} \wedge t},
  \end{equation*}
  where the convergence is uniform in $t \in [0,T]$.
\end{lemma}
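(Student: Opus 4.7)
The plan is to reduce the claim to a polarization-type identity that expresses the candidate limiting sum as a telescoping term minus a symmetrized Riemann sum, and then to invoke Property (RIE) to pass to the limit uniformly in $t$.

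First, for fixed $n$ and $t \in [0,T]$, set $a_k := S_{t^n_k \wedge t}$ and $b_k := S_{t^n_{k+1}\wedge t}$, so that $b_k - a_k = S_{t^n_k \wedge t, t^n_{k+1} \wedge t}$. An elementary tensor identity gives
\begin{equation*}
(b_k - a_k) \otimes (b_k - a_k) = b_k \otimes b_k - a_k \otimes a_k - a_k \otimes (b_k - a_k) - (b_k - a_k) \otimes a_k.
\end{equation*}
Summing over $k = 0, \dots, N_n - 1$ and using telescoping together with $S_0 \otimes S_{0,t} = \sum_k a_k \otimes (b_k - a_k) + \text{telescoping terms}$, I would arrive at the identity
\begin{equation*}
\sum_{k=0}^{N_n-1} S_{t^n_k \wedge t, t^n_{k+1} \wedge t}^{\otimes 2} = S_t \otimes S_t - S_0 \otimes S_0 - 2\,\textup{Sym}\bigg(\int_0^t S^n_u \otimes \d S_u\bigg),
\end{equation*}
where I recognize $\int_0^t S^n_u \otimes \d S_u = \sum_k S_{t^n_k} \otimes S_{t^n_k \wedge t, t^n_{k+1} \wedge t}$ as in the formulation of (RIE).

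Next, by Property (RIE) the right-hand side converges uniformly in $t$ as $n \to \infty$ to $S_t \otimes S_t - S_0 \otimes S_0 - 2\,\textup{Sym}(\int_0^t S_u \otimes \d S_u)$. Substituting the definition \eqref{eq:defn A st} of $\S_{0,t}$ and expanding $2\,\textup{Sym}(S_0 \otimes S_{0,t})$ gives
\begin{equation*}
S_t \otimes S_t - S_0 \otimes S_0 - 2\,\textup{Sym}(S_0 \otimes S_{0,t}) = S_{0,t} \otimes S_{0,t},
\end{equation*}
so the limit equals $S_{0,t} \otimes S_{0,t} - 2\,\textup{Sym}(\S_{0,t}) = [\bS]_t$, uniformly in $t$, which yields the Riemann-sum representation claimed in the lemma.

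Finally, for the finite total variation, the point is that each discrete approximant $[\bS]^n_t := \sum_k S_{t^n_k \wedge t, t^n_{k+1} \wedge t}^{\otimes 2}$ is, as a function of $t$, increasing in the positive semidefinite order (each added increment being a rank-one PSD matrix), and this monotonicity passes to the uniform limit, so $[\bS]_t - [\bS]_s$ is symmetric PSD for $0 \leq s \leq t \leq T$. For any PSD matrix $A$ one has $\|A\|_F \leq \textup{tr}(A)$, and hence for any partition of $[0,T]$,
\begin{equation*}
\sum_{[s,t]} \|[\bS]_{s,t}\|_F \leq \sum_{[s,t]} \textup{tr}([\bS]_{s,t}) = \textup{tr}([\bS]_T) < \infty,
\end{equation*}
by telescoping, which gives finite total variation.

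The only mildly delicate step is the first one: keeping track of the truncations $\,\cdot \wedge t\,$ in the partition so that the discrete identity matches exactly the Riemann sum $\int_0^t S^n_u \otimes \d S_u$ appearing in (RIE); once this bookkeeping is done, the remainder of the argument is algebraic manipulation and an application of the uniform convergence guaranteed by Property (RIE).
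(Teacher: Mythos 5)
Your argument for the Riemann-sum representation is correct and, notably, more self-contained than the paper's: the paper computes the components $[\bS]^{ij}_t = S^i_t S^j_t - S^i_0 S^j_0 - \int_0^t S^i_u\,\d S^j_u - \int_0^t S^j_u\,\d S^i_u$ and then delegates directly to Lemmas~4.17 and~4.22 of \cite{Perkowski2016}, whereas you prove the same identity from scratch via the polarization/telescoping decomposition
\begin{equation*}
\sum_{k=0}^{N_n-1} S_{t^n_k \wedge t, t^n_{k+1} \wedge t}^{\otimes 2} = S_t \otimes S_t - S_0 \otimes S_0 - 2\,\textup{Sym}\bigg(\int_0^t S^n_u \otimes \d S_u\bigg),
\end{equation*}
and then pass to the limit using only the convergence built into Property~\textup{(RIE)}. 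The bookkeeping with the truncations $\cdot \wedge t$ is handled correctly, since the terms with $t^n_k > t$ carry a zero increment and so $S_{t^n_k \wedge t}$ may be replaced by $S_{t^n_k}$ without affecting the sum. That part of your proof buys a more elementary, citation-free argument for the same conclusion.

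However, there is a genuine gap in your argument for finite total variation. You claim that, for fixed $n$, the map $t \mapsto [\bS]^n_t := \sum_k S_{t^n_k \wedge t, t^n_{k+1} \wedge t}^{\otimes 2}$ is nondecreasing in the positive-semidefinite order because ``each added increment is a rank-one PSD matrix.'' This is false in general: for $t \in (t^n_j, t^n_{j+1})$ one has
\begin{equation*}
[\bS]^n_t = \sum_{k < j} S_{t^n_k, t^n_{k+1}}^{\otimes 2} + S_{t^n_j, t}^{\otimes 2},
\end{equation*}
and the term $S_{t^n_j, t}^{\otimes 2}$ is not monotone in $t$ within the subinterval (e.g.\ if $S_{t^n_j, t}$ shrinks, the rank-one matrix decreases). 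Consequently $[\bS]^n_t - [\bS]^n_s$ need not be PSD when $s$ or $t$ falls strictly inside a partition interval, so you cannot directly ``pass the monotonicity to the uniform limit.'' What is true is that $[\bS]^n$ is PSD-nondecreasing along the grid $\mathcal{P}^n([0,T])$, and this is enough: given $0 \le s < t \le T$, choose grid points $s_n, t_n \in \mathcal{P}^n([0,T])$ with $s_n \to s$ and $t_n \to t$; then $[\bS]^n_{t_n} - [\bS]^n_{s_n} = \sum_{k: s_n \le t^n_k < t_n} S_{t^n_k, t^n_{k+1}}^{\otimes 2} \succeq 0$, and uniform convergence together with continuity of the limit gives $[\bS]_t - [\bS]_s \succeq 0$. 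With this fix, your trace estimate $\sum \|[\bS]_{s,t}\|_F \le \textup{tr}([\bS]_T) < \infty$ is valid and closes the argument. The underlying idea is sound; the specific justification of PSD-monotonicity needs to go via the grid rather than via pointwise monotonicity of the approximants.
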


\begin{proof}
  The $(i,j)$-component of $[\bS]_t$ is given by
  \begin{equation*}
    [\bS]^{ij}_t = S^i_{0,t} S^j_{0,t} - \S^{ij}_{0,t} - \S^{ji}_{0,t} = S^i_t S^j_t - S^i_0 S^j_0 - \int_0^t S^i_u \dd S^j_u - \int_0^t S^j_u \dd S^i_u.
  \end{equation*}
  The result then follows from Lemmas~4.17 and~4.22 in \cite{Perkowski2016}.
\end{proof}

In view of Lemma~\ref{lem: property of bracket of rough paths}, when assuming Property \textup{(RIE)}, we also refer to the bracket~$[\bS]$ as the \emph{quadratic variation} of~$S$.

\subsection{Rough integrals as limits of Riemann sums}\label{subsec: rough integrals as Riemann sums}

As previously mentioned, the main motivation to introduce Property~\textup{(RIE)} is to obtain the rough integral as a limit of left-point Riemann sums, in order to restore the interpretation of the rough integral as the capital process associated with a financial investment. Indeed, we present the following extension of \cite[Theorem~4.19]{Perkowski2016}, which will be another central tool in our pathwise portfolio theory. The proof of Theorem~\ref{thm: Ito integral for smooth transformed RIE path} is postponed to Appendix~\ref{sec:appendix on RIE}.


\begin{theorem}\label{thm: Ito integral for smooth transformed RIE path}
  Suppose that $S \in C([0,T];\R^d)$ satisfies \textup{(RIE)} with respect to $p$ and $(\mathcal{P}^n)_{n \in \N}$. Let $q \geq p$ such that $2/p + 1/q > 1$. Let $f \in C^{p + \epsilon}$ for some $\epsilon > 0$, so that in particular $(f(S),\D f(S)) \in \crpSq$. Then, for any $(Y,Y') \in \crpSq$, the integral of $(Y,Y')$ against $(f(S),\D f(S))$, as defined in Lemma~\ref{lem: int of controlled paths exists}, is given by
  \begin{equation}\label{eq:general rough integral Riemann sums}
    \int_0^t Y_u \dd f(S)_u = \lim_{n \to \infty} \sum_{k=0}^{N_n-1} Y_{t^n_k} f(S)_{t^n_k \wedge t,t^n_{k+1} \wedge t},
  \end{equation}
  where the convergence is uniform in $t \in [0,T]$.
\end{theorem}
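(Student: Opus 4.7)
The plan is to combine a second-order Taylor expansion of $f$ around the left partition points with the original \cite[Theorem~4.19]{Perkowski2016} applied to integration against $\bS$, and then to identify the resulting limit with $\int_0^t Y_u\dd f(S)_u$ via the rough It\^o formula and the associativity of rough integration.

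First, using $f \in C^{p+\epsilon}$, I would write, for each sub-interval $[t^n_k,t^n_{k+1}]$ of $\mathcal{P}^n$,
\[
f(S)_{t^n_k,t^n_{k+1}} = \D f(S_{t^n_k})\, S_{t^n_k,t^n_{k+1}} + \tfrac{1}{2}\D^2 f(S_{t^n_k})\, S_{t^n_k,t^n_{k+1}}^{\otimes 2} + R^n_k,
\]
with a remainder satisfying $|R^n_k| \leq C\,|S_{t^n_k,t^n_{k+1}}|^{p+\epsilon}$. Substituting this into the right-hand side of \eqref{eq:general rough integral Riemann sums} (after truncation at $t$) and summing, the remainder contribution is dominated by $\|Y\|_\infty \|S\|_p^p \,(\sup_k|S_{t^n_k,t^n_{k+1}}|)^{\epsilon}$, which vanishes uniformly in $t$ by the uniform continuity of $S$. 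The linear piece $\sum_k Y_{t^n_k}\D f(S_{t^n_k})\, S_{t^n_k \wedge t,t^n_{k+1}\wedge t}$ is the left-point Riemann sum for $Y\D f(S)$ against $\bS$, and since $(Y\D f(S),\,Y'\D f(S)+Y\D^2 f(S))$ is an $S$-controlled path (obtained via the Leibniz rule for controlled paths together with the fact that $\D f\circ S$ is controlled thanks to $f\in C^{p+\epsilon}$), the original \cite[Theorem~4.19]{Perkowski2016} yields uniform convergence of this piece to $\int_0^t Y_u\D f(S_u)\dd\bS_u$. For the quadratic piece $\tfrac{1}{2}\sum_k Y_{t^n_k}\D^2 f(S_{t^n_k})\, S_{t^n_k \wedge t,t^n_{k+1}\wedge t}^{\otimes 2}$, Lemma~\ref{lem: property of bracket of rough paths} identifies its integrator $F_n(t):=\sum_{k} S_{t^n_k\wedge t,t^n_{k+1}\wedge t}^{\otimes 2}$ as a sequence converging uniformly to $[\bS]_t$ on $[0,T]$; monotonicity of the diagonal entries of $F_n$ together with Cauchy--Schwarz for the off-diagonal entries provides a uniform bound on $|F_n|_{\textup{TV}}$, and a standard Riemann--Stieltjes argument for continuous integrands against uniformly converging bounded-variation integrators then yields uniform convergence of this piece to $\tfrac{1}{2}\int_0^t Y_u\D^2 f(S_u)\dd[\bS]_u$.

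It remains to identify the sum of these three limits with the rough integral $\int_0^t Y_u\dd f(S)_u$ defined in Lemma~\ref{lem: int of controlled paths exists}. For this I would apply the rough It\^o formula (Proposition~\ref{prop: general Ito formula for rough paths}) with $g=f$, $F=S$, $F'=\mathrm{Id}$, $F''=0$ and $\Gamma=0$ to obtain
\[
f(S)_t = f(S_0) + \int_0^t \D f(S_u)\dd\bS_u + \tfrac{1}{2}\int_0^t \D^2 f(S_u)\dd[\bS]_u,
\]
and then invoke the associativity of rough integration (from Section~\ref{subsec: associativity of rough integration}) to conclude
\[
\int_0^t Y_u\dd f(S)_u = \int_0^t Y_u\D f(S_u)\dd\bS_u + \tfrac{1}{2}\int_0^t Y_u\D^2 f(S_u)\dd[\bS]_u,
\]
which matches the limit computed above. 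The main obstacle I anticipate is the bookkeeping required to make each of the three convergences uniform in the truncation time $t\in[0,T]$ rather than merely pointwise: the control functions appearing in \eqref{eq:rough integral estimate} and \eqref{eq:est int of controlled paths} must be chosen independently of $t$, and the product/composition rules for controlled paths must be verified with exponents compatible with $2/p+1/q>1$, so that $(Y\D f(S),\,Y'\D f(S)+Y\D^2 f(S))$ genuinely lies in $\crpSq$ with the claimed Gubinelli derivative.
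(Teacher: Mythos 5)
Your overall strategy coincides with the paper's: apply the rough It\^o formula plus associativity to write $\int_0^t Y_u\,\d f(S)_u = \int_0^t Y_u\D f(S_u)\,\d\bS_u + \tfrac12\int_0^t Y_u\D^2 f(S_u)\,\d[\bS]_u$, Taylor-expand $f(S)_{t^n_k,t^n_{k+1}}$ into linear, quadratic and remainder pieces, dispose of the remainder via the $|S_{u,v}|^{p+\epsilon}$ bound, and treat the linear piece with \cite[Theorem~4.19]{Perkowski2016}. Those steps, including the exponent bookkeeping for the remainder and the controlled-path structure of $Y\D f(S)$, are sound and match the paper.

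The gap is in your treatment of the quadratic piece. You assert that the integrators $F_n(t)=\sum_k S^{\otimes 2}_{t^n_k\wedge t,\,t^n_{k+1}\wedge t}$ have uniformly bounded total variation, citing ``monotonicity of the diagonal entries''. The diagonal entries of $F_n$ are monotone only \emph{along the partition points} $\{t^n_k\}$. Within a subinterval $[t^n_j,t^n_{j+1}]$ one has $F_n(t)-F_n(t^n_j)=S^{\otimes 2}_{t^n_j,t}$, whose $i$-th diagonal entry $(S^i_{t^n_j,t})^2$ oscillates as $S^i$ crosses its starting level; its total variation on $[t^n_j,t^n_{j+1}]$ is controlled by the total variation of $S$ there, which is generically infinite for a $p$-rough path with $p>2$ (e.g.\ Brownian sample paths). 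So $|F_n|_{\textup{TV}}=\infty$ in general, and a ``standard Riemann--Stieltjes argument for continuous integrands against uniformly converging bounded-variation integrators'' does not apply. A summation-by-parts attempt runs into the same problem in dual form: one is left with $\|F_n-[\bS]\|_\infty\cdot\sum_k|(Y\D^2f(S))_{t^n_k,t^n_{k+1}}|$, and the second factor typically diverges without a rate that $\|F_n-[\bS]\|_\infty$ compensates. The paper avoids this by citing \cite[Lemma~5.11]{Friz2020}, which establishes \emph{pointwise} convergence of $\sum_k Y_{t^n_k}\D^2 f(S_{t^n_k})S^{\otimes 2}_{t^n_k\wedge t,\,t^n_{k+1}\wedge t}$ to $\int_0^t Y_u\D^2 f(S_u)\,\d[\bS]_u$ via a sewing-type estimate (not a TV bound), and then upgrades to uniform convergence on $[0,T]$ by P\'olya's theorem, exploiting the continuity and monotone structure of the limit. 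Your argument needs to be replaced by something of this kind.
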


As an immediate consequence of Theorem~\ref{thm: Ito integral for smooth transformed RIE path}, assuming Property \textup{(RIE)}, we note that, for $(Y,Y') \in \crpSq$, the rough integral
\begin{equation}\label{eq:int F dS Riemann sum}
  \int_0^t Y_u \,\d \bS_u = \lim_{n \to \infty} \sum_{k=0}^{N_n-1} Y_{t^n_k} S_{t^n_k \wedge t,t^n_{k+1} \wedge t},
\end{equation}
and indeed the more general rough integral in \eqref{eq:general rough integral Riemann sums}, is independent of the Gubinelli derivative $Y^\prime$. However, in the spirit of F{\"o}llmer's pathwise quadratic variation and integration, the right-hand sides of \eqref{eq:general rough integral Riemann sums} and \eqref{eq:int F dS Riemann sum} do in general depend on the sequence of partitions $(\mathcal{P}^n)_{n \in \N}$.


\section{Pathwise (relative) portfolio wealth processes and master formula}\label{sec:master formula}

In this section we consider pathwise portfolio theory on the rough path foundation presented in Section~\ref{sec:rough integration intro}. In particular, we study the growth of wealth processes relative to the market portfolio, and provide an associated pathwise master formula analogous to that of classical stochastic portfolio theory, cf.~\cite{Fernholz1999,Strong2014,Schied2018}. We start by introducing the basic assumptions on the underlying financial market.

\subsection{The financial market}

Since we want to investigate the long-run behaviour of wealth processes, we consider the price trajectories of $d$ assets on the time interval $[0,\infty)$. As is common in stochastic portfolio theory, we do not include default risk---that is, all prices are assumed to be strictly positive---and we do not distinguish between risk-free and risky assets.

\smallskip

A partition $\mathcal{P}$ of the interval $[0,\infty)$ is a strictly increasing sequence of points $(t_i)_{i \geq 0} \subset [0,\infty)$, with $t_0 = 0$ and such that $t_i \to \infty$ as $i \to \infty$. Given any $T > 0$, we denote by $\mathcal{P}([0,T])$ the restriction of the partition $\mathcal{P} \cup \{T\}$ to the interval $[0,T]$, i.e.~$\cP([0,T]) := (\cP \cup \{T\}) \cap [0,T]$. For a path $S \colon [0,\infty) \to \R^d$, we write $S|_{[0,T]}$ for the restriction of $S$ to $[0,T]$, and we set $\R_+ := (0,\infty)$.

\begin{definition}
  For a fixed $p \in (2,3)$, we say that a path $S \in C([0,\infty);\R_+^d)$ is a \emph{price path}, if there exists a sequence of partitions $(\cP_S^n)_{n \in \N}$ of the interval $[0,\infty)$, with vanishing mesh size on compacts, such that, for all $T > 0$, the restriction $S|_{[0,T]}$ satisfies \textup{(RIE)} with respect to $p$ and $(\cP_S^n([0,T]))_{n \in \N}$.

  We denote the family of all such price paths by $\Omega_p$.
\end{definition}



It seems to be natural to allow the partitions $(\mathcal{P}^n_S)_{n \in \N}$ to depend on the price path $S$, since partitions are typically given via stopping times in stochastic frameworks.

\smallskip

Throughout the remainder of the paper, we adopt the following assumption on the regularity parameters.

\begin{assumption}\label{ass: parameters}
  Let $p \in (2,3)$, $q \geq p$ and $r > 1$ be given such that
  \begin{equation*}
    \frac{2}{p} + \frac{1}{q} > 1 \qquad \text{and} \qquad \frac{1}{r} = \frac{1}{p} + \frac{1}{q}.
  \end{equation*}
\end{assumption}

In particular, we note that $1 < p/2 \leq r < p \leq q < \infty$.

\smallskip

By Property (RIE), we can (and do) associate to every price path $S \in \Omega_p$ the $p$-rough path $\bS = (S,\S)$, as defined in~\eqref{eq:defn A st}. We can then define the \emph{market covariance} as the matrix $a = [a^{ij}]_{1 \leq i, j \leq d}$, with $(i,j)$-component given by the measure
\begin{equation}\label{eq:defn covariance matrix a}
  a^{ij}(\d s) := \frac{1}{S^i_s S^j_s} \,\d [\bS]^{ij}_s.
\end{equation}
Although we do not work in a probabilistic setting and thus should not, strictly speaking, talk about covariance in the probabilistic sense, the relation \eqref{eq:defn covariance matrix a} is consistent with classical stochastic portfolio theory (with the bracket process replaced by the quadratic variation), and it turns out to still be a useful quantity in pathwise frameworks, cf.~\cite{Schied2016,Schied2018}.

\subsection{Pathwise portfolio wealth processes}

We now introduce admissible portfolios and the corresponding wealth processes on the market defined above. To this end, we first fix the notation:
\begin{align*}
  \Delta^d := \bigg\{x = (x^1, \ldots, x^d) \in \R^d \, : \, \sum_{i=1}^d x^i = 1\bigg\},
\end{align*}
$\Delta^d_+ := \{x \in \Delta^d : x^i > 0 \ \ \forall i = 1, \ldots, d\}$ and $\overline{\Delta}^d_+ := \{x \in \Delta^d : x^i \geq 0 \ \ \forall i = 1, \ldots, d\}$.

\begin{definition}\label{def: admissible strategies}
  We say that a path $F \colon [0,\infty) \to \R^d$ is an \emph{admissible strategy} if, for every $T > 0$, there exists a path $F' \colon [0,T] \to \mathcal{L}(\R^d;\R^d)$ such that $(F|_{[0,T]},F') \in \crpSq$ is a controlled path with respect to $S$ (in the sense of Definition~\ref{def: controlled path}). We say that an admissible strategy $\pi$ is a \emph{portfolio} for $S$ if additionally $\pi_t \in \Delta^d$ for all $t \in [0,\infty)$.
\end{definition}

\begin{remark}
  As explained in \cite[Remark~4.7]{Friz2020}, if $S$ is sufficiently regular then, given an admissible strategy $F$, there could exist multiple different Gubinelli derivatives $F'$ such that the pair $(F,F')$ defines a valid controlled path with respect to $S$. However, thanks to Property \textup{(RIE)}, Theorem~\ref{thm: Ito integral for smooth transformed RIE path} shows that the rough integral $\int F \,\d \bS$ can be expressed as a limit of Riemann sums which only involve $F$ and $S$, and, therefore, is independent of the choice of $F'$. Thus, the choice of the Gubinelli derivative $F'$ is unimportant, provided that at least one exists. Indeed, one could define an equivalence relation $\sim$ on $\crpSq$ such that $(F,F') \sim (G,G')$ if $F = G$, and define the family of admissible strategies as elements of the quotient space $\crpSq/\sim$. By a slight abuse of notation, we shall therefore sometimes write simply $F \in \crpSq$ instead of $(F, F') \in \crpSq$.
\end{remark}

\begin{remark}\label{remark: functionally generalized portfolio}
  While the admissible class of portfolios introduced in Definition~\ref{def: admissible strategies} allows for a pathwise (model-free) analysis (without notions like filtration or predictability), it also covers the most frequently applied classes of functionally generated portfolios---see \cite{Fernholz1999}---and their generalizations as considered in e.g.~\cite{Strong2014} and \cite{Schied2018}. Indeed, every path-dependent functionally generated portfolio which is sufficiently smooth in the sense of Dupire~\cite{Dupire2019} (see also \cite{Cont2010}), is a controlled path and thus an admissible strategy, as shown in \cite{Ananova2020}.

  In the present work we will principally focus on ``adapted'' strategies $F$, in the sense that $F$ is a controlled path, as in Definition~\ref{def: admissible strategies}, with $F_t$ being a measurable function of $S|_{[0,t]}$ for each $t \in [0,\infty)$. In other words, if $S$ is modelled by a stochastic process then we require $F$ to be adapted to the natural filtration generated by $S$. Clearly, such adapted admissible strategies are reasonable choices in the context of mathematical finance.
\end{remark}

A portfolio $\pi = (\pi^1,\dots,\pi^d)$ represents the ratio of the investor's wealth invested into each of the $d$ assets. As is usual, we normalize the initial wealth to be $1$, since in the following we will only be concerned with the long-run growth. Suppose $S \in \Omega_p$ with corresponding sequence $(\mathcal{P}^n_S)_{n \in \N}$ of partitions. If we restrict the rebalancing according to the portfolio $\pi$ to the discrete times given by $\mathcal{P}^n_S = (t^n_j)_{j \in \N}$, then the corresponding wealth process $W^n$ satisfies
\begin{equation*}
  W^n_t = 1 + \sum_{j=1}^\infty \frac{\pi_{t_j}W^n_{t_j}}{S_{t_j}} S_{t_j \wedge t,t_{j+1} \wedge t} = 1 + \sum_{j=1}^\infty \sum_{i=1}^d \frac{\pi^i_{t_j}W^n_{t_j}}{S^i_{t_j}} S^i_{t_j \wedge t,t_{j+1} \wedge t}
\end{equation*}
with $t_j \wedge t := \min \{t_j,t\}$. Taking the limit to continuous-time (i.e.~$n \to \infty$) and keeping Property (RIE) in mind, we observe that the wealth process $W^\pi$ associated to the portfolio $\pi$ should satisfy
\begin{equation}\label{eq:wealth process}
  W^\pi_t = 1 + \int_0^t \frac{\pi_s W^\pi_s}{S_s} \dd \bS_s, \qquad t \in [0,\infty).
\end{equation}
Analogously to (classical) stochastic portfolio theory (e.g.~\cite{Karatzas2007} or \cite{Schied2018}), the wealth process associated to a portfolio may be expressed as a (rough) exponential.

\begin{lemma}\label{lem: rough exponential gives self financing integrands}
  Let $\pi$ be a portfolio for $S \in \Omega_p$. Then the wealth process $W^\pi$ (with unit initial wealth), given by
  \begin{equation*}
    W^\pi_t := \exp \bigg(\int_0^t \frac{\pi_s}{S_s} \,\d \bS_s - \frac{1}{2} \sum_{i,j=1}^d \int_0^t \frac{\pi^i_s \pi^j_s}{S^i_s S^j_s} \,\d [\bS]^{ij}_s\bigg), \qquad t \in [0,\infty),
  \end{equation*}
  satisfies \eqref{eq:wealth process}, where $\int_0^t \frac{\pi_s}{S_s} \,\d \bS_s$ is the rough integral of the controlled path $\pi/S$ with respect to rough path $\bS$, and $\int_0^t \frac{\pi^i_s \pi^j_s}{S^i_s S^j_s} \,\d [\bS]^{ij}_s$ is the usual Riemann--Stieltjes integral with respect to the $(i,j)$-component of the (finite variation) bracket $[\bS]$.
\end{lemma}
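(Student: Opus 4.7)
The plan is to apply the rough Itô formula, Proposition~\ref{prop: general Ito formula for rough paths}, with the exponential function $g(x) = \exp(x)$ to the path
\begin{equation*}
F_t := \int_0^t \frac{\pi_s}{S_s} \,\d \bS_s + \Gamma_t, \qquad \Gamma_t := -\frac{1}{2}\sum_{i,j=1}^d \int_0^t \frac{\pi^i_s \pi^j_s}{S^i_s S^j_s} \,\d [\bS]^{ij}_s,
\end{equation*}
so that $W^\pi_t = g(F_t)$. Everything is worked on a fixed compact interval $[0,T]$, after which $T > 0$ can be sent to infinity, since $S|_{[0,T]}$ satisfies (RIE) by the definition of $\Omega_p$.

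First I would verify the hypotheses of Proposition~\ref{prop: general Ito formula for rough paths}. Because $S$ takes values in $\mathbb{R}_+^d$ and $\pi$ is a portfolio, the ratio $\pi/S$ is a controlled path with respect to $S$: this uses the standard stability of $\crpSq$ under smooth composition (applied componentwise to $x \mapsto 1/x$ on a compact subset of $\mathbb{R}_+$, drawing on the prototype of Example~\ref{ex: controlled path 1+epsilon}) and under pointwise multiplication. Iterating the product rule, the Gubinelli derivative of $\pi/S$ is itself controlled, which provides an $F''$ such that $(F', F'') = (\pi/S, (\pi/S)') \in \crpSq$, and a fortiori $(F, F') \in \crpSq$. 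The drift $\Gamma$ has finite total variation and hence finite $p/2$-variation, since $[\bS]$ has finite total variation by Lemma~\ref{lem: property of bracket of rough paths}. Finally, since $F$ is continuous on $[0,T]$, its image lies in a compact subinterval of $\mathbb{R}$, so a standard localization allows $g = \exp$ to be replaced by a compactly supported $C^\infty$ (and hence $C^{p+\epsilon}$) function coinciding with $\exp$ on that subinterval, producing the same $g(F)$.

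Applying Proposition~\ref{prop: general Ito formula for rough paths} then gives
\begin{equation*}
W^\pi_t = 1 + \int_0^t \D g(F_u)\frac{\pi_u}{S_u} \,\d \bS_u + \int_0^t \D g(F_u) \,\d \Gamma_u + \frac{1}{2}\int_0^t \D^2 g(F_u)\bigg(\frac{\pi_u}{S_u} \otimes \frac{\pi_u}{S_u}\bigg) \,\d [\bS]_u.
\end{equation*}
Using $\D g = \D^2 g = g$ and $g(F_u) = W^\pi_u$, the second term equals $-\tfrac{1}{2}\int_0^t W^\pi_u \sum_{i,j} \tfrac{\pi^i_u \pi^j_u}{S^i_u S^j_u} \,\d [\bS]^{ij}_u$, which cancels exactly with the fourth. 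What remains is precisely $W^\pi_t = 1 + \int_0^t \frac{\pi_u W^\pi_u}{S_u} \,\d \bS_u$, which is \eqref{eq:wealth process}. Note that the rough integral on the right-hand side makes sense because $W^\pi = g(F)$ is itself a controlled path (as a smooth function of the controlled path $F$), and the product $W^\pi \pi / S$ is therefore controlled as well.

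The main obstacle is the bookkeeping required to produce the second-order Gubinelli derivative $F''$: one must trace through the composition-with-smooth-function and product rules for controlled paths to show that $\pi/S$ has a controlled Gubinelli derivative, since the Itô formula strictly demands $(F', F'') \in \crpSq$. Once this regularity is in hand, the cancellation between the Itô correction term and the drift $\Gamma$---which was chosen for precisely this purpose---is an immediate and transparent matching of terms.
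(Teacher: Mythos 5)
Your proof is correct, but it follows a noticeably different route than the paper's. The paper lifts $Z := \int_0^\cdot \frac{\pi_s}{S_s}\dd\bS_s$ to a one-dimensional rough path $\bZ$ (Section~\ref{sec Rough paths associated to controlled paths}), identifies $[\bZ] = \int_0^\cdot (\pi/S)^{\otimes 2}\dd[\bS]$ via Lemma~\ref{lem: Ito isometry for rough paths}, applies the rough exponential lemma (Lemma~\ref{lem: dynamics of rough exponential}) to obtain the linear RDE $W^\pi = 1 + \int W^\pi\dd\bZ$, and then translates this back to an integral against $\bS$ via the consistency (Lemma~\ref{lem: consistency of rough integrals}) and associativity (Proposition~\ref{prop: associativity of rough integration}) of rough integration. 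You instead apply the rough It{\^o} formula (Proposition~\ref{prop: general Ito formula for rough paths}) directly to the $\bS$-controlled path $F = Z + \Gamma$, effectively inlining the proof of Lemma~\ref{lem: dynamics of rough exponential} but with the non-constant Gubinelli derivative $F' = \pi/S$ in place of the constant $Y'=1$ used there. Both routes hinge on the same ingredients: the product rule (Lemma~\ref{lem: product of controlled rough paths}) to establish $(\pi/S,(\pi/S)') \in \crpSq$, and the finite total variation of $[\bS]$ (Lemma~\ref{lem: property of bracket of rough paths}). The paper's route is more modular, reusing the exponential lemma and the associativity machinery also needed in Propositions~\ref{prop: log of relative wealth} and~\ref{lemma wealth of universal portfolio}; yours is more self-contained and makes the cancellation between the drift $\Gamma$ and the It{\^o} correction term transparent in a single computation. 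The localization of $\exp$ to a compactly supported $C^\infty$ modification on the range of $F$ is a welcome detail that the paper elides.

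One small wording slip, which does not affect correctness: you say ``the Gubinelli derivative of $\pi/S$ is itself controlled,'' but what the It{\^o} formula requires is merely that $(F',F'') = (\pi/S,(\pi/S)') \in \crpSq$, i.e.\ that $\pi/S$ is a controlled path with a Gubinelli derivative of finite $q$-variation and remainder of finite $r$-variation. You do not need $(\pi/S)'$ to itself admit a further Gubinelli derivative. The product rule gives exactly the needed $(\pi/S,(\pi/S)') \in \crpSq$ in one step, with no iteration required.
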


\begin{proof}
  Note that, since $1/S = f(S)$ with the smooth function $f(x) = (1/x^1, \ldots,1/x^d)$ on $\R^d_{+}$, the pair $(1/S,\D f(S)) \in \crpSp \subset \crpSq$ is a controlled path. Therefore, for each portfolio $\pi \in \crpSq$, we can define the quotient $\pi/S = (\pi^1/S^1, \ldots, \pi^d/S^d)$, which gives an element $(\pi/S,(\pi/S)^\prime)$ in $\crpSq$; see Lemma~\ref{lem: product of controlled rough paths}.

  Setting $Z := \int_0^\cdot \frac{\pi_s}{S_s} \,\d \bS_s$, by Lemma~\ref{lem: Ito isometry for rough paths}, we have that
  \begin{equation*}
    [\bZ] = \int_0^\cdot \bigg(\frac{\pi_s}{S_s} \otimes \frac{\pi_s}{S_s}\bigg) \,\d [\bS]_s = \sum_{i,j=1}^d \int_0^\cdot \frac{\pi^i_s \pi^j_s}{S^i_s S^j_s} \,\d [\bS]^{ij}_s,
  \end{equation*}
  where $\bZ$ is the canonical rough path lift of $Z$ (see Section~\ref{sec Rough paths associated to controlled paths}). We then have that $W^\pi_t = \exp(Z_t - \frac{1}{2}[\bZ]_t)$, so that, by Lemma~\ref{lem: dynamics of rough exponential}, $W^\pi$ satisfies
  \begin{equation*}
    W^\pi_t = 1 + \int_0^t W^\pi_s \,\d \bZ_s, \qquad t \in [0,\infty).
  \end{equation*}
  By Lemma~\ref{lem: consistency of rough integrals} and Proposition~\ref{prop: associativity of rough integration} it then follows that $W^\pi$ satisfies~\eqref{eq:wealth process}.
\end{proof}

\begin{remark}
  Every portfolio $\pi$ can be associated to a self-financing admissible strategy $\xi$ by setting $\xi^i_t := \pi^i_t W^\pi_t/S^i_t$ for $i = 1,\ldots,d$. Indeed, we have that $W^\pi_t = \sum_{i=1}^d \xi^i_t S^i_t$, and that
  \begin{equation*}
    W^\pi_t = 1 + \int_0^t \frac{\pi_s W^\pi_s}{S_s} \,\d \bS_s = 1 + \int_0^t \xi_s \,\d \bS_s, \qquad t \in [0,\infty),
  \end{equation*}
  so that $\xi$ is self-financing.
\end{remark}

As in the classical setup of stochastic portfolio theory (e.g.~\cite{Fernholz2002}) we introduce the market portfolio as a reference portfolio.

\begin{lemma}\label{lem: market portfolio}
  The path $\mu \colon [0,\infty) \to \Delta^d_+$, defined by $\mu^i_t := \frac{S^i_t}{S^1_t + \cdots + S^d_t}$ for $i = 1, \ldots, d$, is a portfolio for $S \in \Omega_p$, called the \emph{market portfolio} (or \emph{market weights process}). The corresponding wealth process (with initial wealth $1$) is given by
  \begin{equation*}
    W^\mu_t = \frac{S^1_t + \cdots + S^d_t}{S^1_0 + \cdots + S^d_0}.
  \end{equation*}
\end{lemma}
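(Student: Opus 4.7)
The plan is to split the argument into two parts: first, to verify that $\mu$ is an admissible portfolio via the smooth-transformation construction of controlled paths; second, to compute the rough-exponential formula for $W^\mu$ from Lemma~\ref{lem: rough exponential gives self financing integrands} explicitly, by applying the rough It\^o formula to $\log \Sigma_t$ with $\Sigma_t := S^1_t + \cdots + S^d_t$.

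For admissibility, fix $T > 0$. Since $S|_{[0,T]}$ is continuous with image in the open positive orthant, its image is contained in a compact subset $K \subset \R_+^d$. On a neighbourhood of $K$ the map
\begin{equation*}
g(x) := \frac{x}{x^1 + \cdots + x^d}
\end{equation*}
is smooth, and after multiplication by a smooth cut-off if desired may be taken to lie in $C^{p+\epsilon}$ globally. Example~\ref{ex: controlled path 1+epsilon} then yields $(\mu|_{[0,T]}, \D g(S|_{[0,T]})) \in \crpSq$, so $\mu$ is an admissible strategy, and it is a portfolio because $\mu_t \in \Delta_+^d \subset \Delta^d$ by construction.

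To identify the wealth process, apply Proposition~\ref{prop: general Ito formula for rough paths} with $F = S$, $F' = I$ (the identity on $\R^d$), $\Gamma = S_0$, and $h(x) := \log(x^1 + \cdots + x^d)$ (smoothly extended after localisation to $K$). This yields
\begin{equation*}
\log \Sigma_t = \log \Sigma_0 + \int_0^t \sum_{i=1}^d \frac{1}{\Sigma_s} \,\d \bS^i_s - \frac{1}{2} \int_0^t \sum_{i,j=1}^d \frac{1}{\Sigma_s^2} \,\d [\bS]^{ij}_s.
\end{equation*}
Invoking the key identity $\mu^i_s / S^i_s = 1/\Sigma_s$ for each $i$, the right-hand side rewrites as
\begin{equation*}
\log \Sigma_0 + \int_0^t \frac{\mu_s}{S_s} \,\d \bS_s - \frac{1}{2} \sum_{i,j=1}^d \int_0^t \frac{\mu^i_s \mu^j_s}{S^i_s S^j_s} \,\d [\bS]^{ij}_s,
\end{equation*}
which is precisely $\log \Sigma_0 + \log W^\mu_t$ for $W^\mu$ defined by the rough-exponential formula of Lemma~\ref{lem: rough exponential gives self financing integrands}. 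Exponentiating gives $W^\mu_t = \Sigma_t/\Sigma_0$, as claimed.

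The only real subtleties are administrative: verifying that $g$ and $h$ can legitimately be treated as functions in $C^{p+\epsilon}$ via localisation to $K$, and checking that the products and compositions of controlled paths in the It\^o expansion are consistent with the controlled paths implicit in the rough-exponential formula (in particular that $\mu/S$ is itself a controlled path, obtained as the product of the controlled paths $\mu$ and $1/S = f(S)$ for $f(x) = (1/x^1, \ldots, 1/x^d)$). I expect no deeper obstacle.
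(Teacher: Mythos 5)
Your proof is correct and takes essentially the same route as the paper: verify admissibility via the smoothness of $x \mapsto x/(x^1+\cdots+x^d)$, then apply the rough It\^o formula to $\log(S^1+\cdots+S^d)$, use the identity $\mu^i_s/S^i_s = 1/\Sigma_s$, and match the result against the rough-exponential expression for $\log W^\mu$ from Lemma~\ref{lem: rough exponential gives self financing integrands}. You are slightly more explicit about the localisation/cut-off step needed to justify the $C^{p+\epsilon}$ hypotheses, which the paper leaves implicit, but the argument is otherwise identical.
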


\begin{proof}
  Since $\mu$ is a smooth function of $S$, it is a controlled path with respect to $S$, and is therefore an admissible strategy. Since $\mu^1_t + \cdots + \mu^d_t = 1$, we see that $\mu$ is indeed a portfolio.

  Let $f(x) := \log (x^1 + \cdots + x^d)$ for $x \in \R^d_+$. By the It{\^o} formula for rough paths (Proposition~\ref{prop: general Ito formula for rough paths}), it follows that
  \begin{align*}
    f(S_t) - f(S_0) &= \int_0^t \bigg(\frac{1}{S^1_s + \cdots + S^d_s}, \, \ldots, \, \frac{1}{S^1_s + \cdots + S^d_s}\bigg) \d \bS_s - \frac{1}{2} \int_0^t \bigg(\frac{\mu_s}{S_s} \otimes \frac{\mu_s}{S_s}\bigg) \,\d [\bS]_s\\
    &= \int_0^t \frac{\mu_s}{S_s} \,\d \bS_s - \frac{1}{2} \sum_{i,j=1}^d \int_0^t \frac{\mu^i_s \mu^j_s}{S^i_s S^j_s} \,\d[\bS]^{ij}_s,
  \end{align*}
  where we used the fact that $\frac{\mu^i_s}{S^i_s} = \frac{1}{S^1_s + \cdots + S^d_s}$. By Lemma~\ref{lem: rough exponential gives self financing integrands}, the right-hand side is equal to $\log W^\mu_t$, so that
  \begin{equation*}
    W^\mu_t = \exp\big(f(S_t) - f(S_0)\big) = \frac{S^1_t + \cdots + S^d_t}{S^1_0 + \cdots + S^d_0}.
  \end{equation*}
\end{proof}

\subsection{Formulae for the growth of wealth processes}

In this subsection we derive pathwise versions of classical formulae of stochastic portfolio theory---see \cite{Fernholz1999}---which describe the dynamics of the relative wealth of a portfolio with respect to the market portfolio; cf.~\cite{Schied2018} for analogous results relying on F{\"o}llmer's pathwise integration.

Given a portfolio $\pi$, we define the relative covariance of $\pi$ by $\tau^\pi = [\tau^\pi_{ij}]_{1 \leq i, j \leq d}$, where
\begin{equation}\label{eq:defn tau^pi_ij}
  \tau^{\pi}_{ij}(\d s) := (\pi_s - e_i)^\top a(\d s)(\pi_s - e_j),
\end{equation}
where $(e_i)_{1 \leq i \leq d}$ denotes the canonical basis of $\R^d$, and we recall $a(\d s)$ as defined in \eqref{eq:defn covariance matrix a}.

Henceforth, we will write
\begin{equation}\label{eq:defn relative wealth process}
  V^\pi := \frac{W^\pi}{W^\mu}
\end{equation}
for the relative wealth of a portfolio $\pi$ with respect to the market portfolio $\mu$.

\begin{proposition}\label{prop: log of relative wealth}
  Let $\pi$ be a portfolio for $S \in \Omega_p$, and let $\mu$ be the market portfolio as above. We then have that
  \begin{equation}\label{eq: formula for log of relative wealth}
    \log V^\pi_t =  \int_0^t \frac{\pi_s}{\mu_s} \,\d \mu_s - \frac{1}{2} \sum_{i,j=1}^d \int_0^t \pi^i_s \pi^j_s \tau^{\mu}_{ij}(\d s), \qquad t \in [0,\infty).
  \end{equation}
\end{proposition}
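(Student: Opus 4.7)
My plan is to start from the explicit exponential representations of $W^\pi$ and $W^\mu$ (Lemma~\ref{lem: rough exponential gives self financing integrands} and the proof of Lemma~\ref{lem: market portfolio}) and then convert the resulting rough integral against $\bS$ into a rough integral against $\mu$ by means of the rough It\^o formula (Proposition~\ref{prop: general Ito formula for rough paths}) and associativity of rough integration. The remaining step is an algebraic identity that re-expresses the finite-variation correction terms via the relative covariance $\tau^\mu$.

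\medskip

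Concretely, subtracting the two exponential formulae gives
\begin{equation*}
\log V^\pi_t \;=\; \int_0^t \frac{\pi_s-\mu_s}{S_s}\,\d \bS_s \;-\; \frac{1}{2}\sum_{i,j=1}^d \int_0^t \frac{\pi^i_s\pi^j_s - \mu^i_s\mu^j_s}{S^i_s S^j_s}\,\d [\bS]^{ij}_s.
\end{equation*}
Next, writing $\mu^i = f^i(S)$ for $f^i(x)=x^i/(x^1+\cdots+x^d)$, the rough It\^o formula yields a controlled-path expression for $\mu^i$ driven by $\bS$. Applying associativity of rough integration (Proposition~\ref{prop: associativity of rough integration}) to the controlled path $\pi/\mu$ gives
\begin{equation*}
\int_0^t \frac{\pi_s}{\mu_s}\,\d \mu_s
\;=\; \sum_{i,j}\int_0^t \frac{\pi^i_s}{\mu^i_s}\,\partial_j f^i(S_s)\,\d \bS^j_s
\;+\;\frac{1}{2}\sum_{i,j,k}\int_0^t \frac{\pi^i_s}{\mu^i_s}\,\partial^2_{jk}f^i(S_s)\,\d[\bS]^{jk}_s,
\end{equation*}
where the second integral is a Riemann--Stieltjes integral because $[\bS]$ has finite variation (Lemma~\ref{lem: property of bracket of rough paths}). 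A direct computation using $\partial_j f^i = (\delta_{ij}-\mu^i)/\Sigma$ (with $\Sigma := S^1+\cdots+S^d$) together with $\sum_i\pi^i = 1$ collapses the first sum to $\int_0^t (\pi_s-\mu_s)/S_s\,\d \bS_s$. A further calculation, using $\partial^2_{jk}f^i=(2\mu^i-\delta_{ij}-\delta_{ik})/\Sigma^2$ and $a^{jk}(\d s)=\d[\bS]^{jk}_s/(S^j_sS^k_s)$, shows that the Hessian term equals $\frac{1}{2}\int_0^t\sum_{j,k}(2\mu^j_s\mu^k_s-\pi^j_s\mu^k_s-\pi^k_s\mu^j_s)\,a^{jk}(\d s)$.

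\medskip

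Substituting the resulting expression for $\int_0^t (\pi_s-\mu_s)/S_s\,\d \bS_s$ back into the formula for $\log V^\pi_t$ and combining all finite-variation terms gives
\begin{equation*}
\log V^\pi_t \;=\; \int_0^t \frac{\pi_s}{\mu_s}\,\d\mu_s \;-\; \frac{1}{2}\int_0^t\sum_{j,k=1}^d (\mu^j_s-\pi^j_s)(\mu^k_s-\pi^k_s)\,a^{jk}(\d s).
\end{equation*}
The proof concludes by observing the algebraic identity $(\mu-\pi)^\top a(\mu-\pi) = \sum_{j,k} \pi^j\pi^k\,(\mu-e_j)^\top a(\mu-e_k) = \sum_{j,k} \pi^j\pi^k \tau^\mu_{jk}$, which follows from $\sum_j \pi^j(\mu - e_j)=\mu-\pi$ since $\sum_j \pi^j = 1$. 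The main technical obstacle is making the chain-rule/associativity step rigorous inside the rough-integration framework; the computations themselves are elementary once associativity and the rough It\^o formula are available, both of which are provided in the excerpt.
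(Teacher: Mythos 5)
Your proof is correct, but it takes a genuinely different route from the paper's. The paper's argument pivots around $\log S$ and $\log \mu$: it first expresses $\log V^\pi$ via the excess growth rate $\gamma^*_\pi$ (Step~1), then obtains a formula for $\log \mu^i$ in terms of $\int (e_i - \mu)\,\d \log S$ and transfers it via an It\^o formula for $\exp(\log\mu^i)$ to an expression for $\int \frac{\pi}{\mu}\,\d\mu$ (Step~2), and finally subtracts and invokes the classical SPT identity relating $\gamma^*_\pi$ to $\tau^\mu$ (Step~3, citing Fernholz's Lemma~1.3.4). You instead avoid the $\log$-change-of-variable entirely: you subtract the two exponential wealth formulas directly, expand $\mu = f(S)$ with the explicit first and second derivatives of $f^i(x) = x^i/(x^1 + \cdots + x^d)$, and track all the finite-variation correction terms by hand. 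The computation indeed collapses to $\int_0^t \frac{\pi_s}{\mu_s}\,\d\mu_s - \tfrac{1}{2}\sum_{j,k}\int_0^t (\mu^j_s - \pi^j_s)(\mu^k_s - \pi^k_s)\,a^{jk}(\d s)$, and your closing identity $\sum_{j,k}\pi^j\pi^k\tau^\mu_{jk} = (\mu - \pi)^\top a\,(\mu - \pi)$, which falls out of $\sum_j \pi^j(\mu - e_j) = \mu - \pi$, is a clean substitute for the Fernholz lemma the paper cites. What the paper's route buys is transparency of the link to classical SPT notation (the excess growth rate $\gamma^*_\pi$ appears explicitly and the intermediate formulas are recognizable from Fernholz's book), whereas your route is more self-contained and elementary at the cost of a somewhat longer Hessian bookkeeping. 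One small point worth spelling out in a final write-up: when you ``apply associativity'' to $\int \frac{\pi}{\mu}\,\d\mu$, you implicitly split $\mu$ into its rough-integral part $\int \D f(S)\,\d\bS$ and the finite $p/2$-variation correction $\Gamma = \tfrac{1}{2}\int \D^2 f(S)\,\d[\bS]$, apply Proposition~\ref{prop: associativity of rough integration} to the former, and interpret the latter as a Young integral. This is exactly the decomposition the paper uses in its own Step~1 when treating $\int \pi\,\d\log S$, so it is legitimate, but it deserves a sentence rather than being subsumed under the word ``associativity''.
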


\begin{remark}\label{remark: about how to express Ito integral wrt mu}
  The integral $\int_0^t \frac{\pi_s}{\mu_s} \,\d \mu_s$ appearing in \eqref{eq: formula for log of relative wealth} is interpreted as the rough integral of the $S$-controlled path $\pi/\mu$ against the $S$-controlled path $\mu$ in the sense of Lemma~\ref{lem: int of controlled paths exists}. By Theorem~\ref{thm: Ito integral for smooth transformed RIE path}, the integral $\int_0^t \frac{\pi_s}{\mu_s} \,\d \mu_s$ can also be expressed as a limit of left-point Riemann sums, which justifies the financial meaning of~\eqref{eq: formula for log of relative wealth}.
\end{remark}

\begin{proof}[Proof of Proposition~\ref{prop: log of relative wealth}]
  \textit{Step~1.} By the It{\^o} formula for rough paths (Proposition~\ref{prop: general Ito formula for rough paths}), with the usual notational convention $\log x = \sum_{i=1}^d \log x^i$, we have
  \begin{equation*}
    \log S_t = \log S_0 + \int_0^t \frac{1}{S_s} \,\d \bS_s - \frac{1}{2} \sum_{i=1}^d \int_0^t \frac{1}{(S^i_s)^2} \,\d [\bS]^{ii}_s, \qquad t \in [0,\infty).
  \end{equation*}
  Since $\pi$ and $\log S$ are $S$-controlled paths, we can define the integral of $\pi$ against $\log S$ in the sense of Lemma~\ref{lem: int of controlled paths exists}. By the associativity of rough integration (Proposition~\ref{prop: associativity of rough integration}), we have
  \begin{equation*}
    \int_0^t \pi_s \,\d \log S_s = \int_0^t \frac{\pi_s}{S_s} \,\d \bS_s  - \frac{1}{2} \sum_{i=1}^d \int_0^t \frac{\pi^i_s}{(S^i_s)^2} \,\d [\bS]^{ii}_s.
  \end{equation*}
  It is convenient to introduce the excess growth rate of the portfolio~$\pi$, given by
  \begin{equation*}
    \gamma_\pi^\ast(\d s) := \frac{1}{2} \bigg(\sum_{i=1}^d \pi^i_s a^{ii}(\d s) - \sum_{i,j=1}^d \pi^i_s \pi^j_s a^{ij}(\d s)\bigg).
  \end{equation*}
  By Lemma~\ref{lem: rough exponential gives self financing integrands}, we have that
  \begin{equation}\label{eq:log V^pi_t espressions}
    \log W^\pi_t = \int_0^t \frac{\pi_s}{S_s} \,\d \bS_s - \frac{1}{2} \sum_{i,j=1}^d \int_0^t \pi^i_s \pi^j_s a^{ij}(\d s) = \int_0^t \pi_s \,\d \log S_s + \gamma_\pi^\ast([0,t]).
  \end{equation}
  In particular, this implies that
  \begin{equation}\label{eq: expression of log relative wealth as rough integral}
    \log V^\pi_t = \int_0^t (\pi_s - \mu_s) \,\d \log S_s + \gamma_\pi^\ast([0,t]) - \gamma_\mu^\ast([0,t]).
  \end{equation}

  \textit{Step~2.} By Lemma~\ref{lem: market portfolio} and \eqref{eq:log V^pi_t espressions}, we have
  \begin{align}
    \log \mu^i_t &= \log \mu^i_0 + \log S^i_t - \log S^i_0 - \log W^\mu_t\nonumber\\
    &= \log \mu^i_0 + \log S^i_t - \log S^i_0 - \int_0^t \mu_s \,\d \log S_s - \gamma_\mu^\ast([0,t])\nonumber\\
    &= \log \mu^i_0 + \int_0^t (e_i - \mu_s) \,\d \log S_s - \gamma_\mu^\ast([0,t]).\label{eq:log mu^i expr}
  \end{align}
  By part (ii) of Proposition~\ref{prop: property of bracket of rough paths} and Lemma~\ref{lem: Ito isometry for rough paths}, we deduce that
  \begin{equation}\label{eq: [log mu] = tau^mu}
    [\log S]_t = a([0,t]), \qquad \text{and} \qquad [\log \mu]_t = \tau^\mu([0,t]).
  \end{equation}
  Applying the It{\^o} formula for rough paths (Proposition~\ref{prop: general Ito formula for rough paths}) to $\exp(\log \mu^i)$, using the associativity of rough integration (Proposition~\ref{prop: associativity of rough integration}), and recalling \eqref{eq:log mu^i expr}, we have
  \begin{equation*}
    \int_0^t \frac{\pi^i_s}{\mu^i_s} \,\d \mu^i_s = \int_0^t \pi^i_s (e_i - \mu_s) \,\d \log S_s - \int_0^t \pi^i_s \,\d \gamma_\mu^\ast(\d s) + \frac{1}{2} \int_0^t \pi^i_s \,\d [\log \mu]^{ii}_s.
  \end{equation*}
  Using \eqref{eq: [log mu] = tau^mu} and summing over $i = 1, \ldots, d$, we obtain
  \begin{equation}\label{eq: int pi/mu d mu expression}
    \int_0^t \frac{\pi_s}{\mu_s} \,\d \mu_s = \int_0^t (\pi_s - \mu_s) \,\d \log S_s - \gamma_\mu^\ast([0,t]) + \frac{1}{2} \sum_{i=1}^d \int_0^t \pi^i_s \tau^\mu_{ii}(\d s).
  \end{equation}

  \textit{Step~3.} Taking the difference of \eqref{eq: expression of log relative wealth as rough integral} and \eqref{eq: int pi/mu d mu expression}, we have
  \begin{equation*}
    \log V^\pi_t = \int_0^t \frac{\pi_s}{\mu_s} \,\d \mu_s + \gamma_\pi^\ast([0,t]) - \frac{1}{2} \sum_{i=1}^d \int_0^t \pi^i_s \tau^\mu_{ii}(\d s).
  \end{equation*}
  It remains to note that
  \begin{equation*}
    \gamma_\pi^\ast([0,t]) = \frac{1}{2} \bigg(\sum_{i=1}^d \int_0^t \pi^i_s \tau^\mu_{ii}(\d s) - \sum_{i,j=1}^d \int_0^t \pi^i_s \pi^j_s \tau^{\mu}_{ij}(\d s)\bigg),
  \end{equation*}
  which follows from a straightforward calculation; see e.g.~\cite[Lemma~1.3.4]{Fernholz2002}.
\end{proof}

While Definition~\ref{def: admissible strategies} allows for rather general portfolios, so-called functionally generated portfolios are the most frequently considered ones in SPT. In a pathwise setting such portfolios and the corresponding master formula were studied previously in \cite{Schied2018} and \cite{Cuchiero2019}. We conclude this section by deriving such a master formula for functionally generated portfolios in the present (rough) pathwise setting.

\smallskip

Let $G$ be a strictly positive function in $C^{p + \epsilon}(\Delta^d_+;\R_+)$ for some $\epsilon > 0$. One can verify that $\nabla \log G(\mu) \in \mathcal{V}^q_\mu$ is a $\mu$-controlled path for a suitable choice of $q$ (see Example~\ref{ex: controlled path 1+epsilon}), and is therefore also an $S$-controlled path by Lemma~\ref{lem: consistency of rough integrals}. Since the product of controlled paths is itself a controlled path (by Lemma~\ref{lem: product of controlled rough paths}), we see that the path $\pi$ defined by
\begin{equation}\label{eq: canonical function generated portfolio}
  \pi^i_t := \mu^i_t\bigg(\frac{\partial}{\partial x_i} \log G(\mu_t) + 1 - \sum_{k=1}^d \mu^k_t \frac{\partial}{\partial x_k} \log G(\mu_t)\bigg), \qquad t \in [0,\infty), \ \ i = 1, \ldots, d,
\end{equation}
is a $\mu$-controlled (and hence also an $S$-controlled) path, and is indeed a portfolio for $S \in \Omega_p$. The function $G$ is called a \emph{portfolio generating function}, and we say that $G$ \emph{generates} $\pi$.

\begin{theorem}[The master formula]\label{thm: master formula}
  Let $G \in C^{p + \epsilon}(\Delta^d_+;\R_+)$ for some $\epsilon > 0$ be a portfolio generating function, and let $\pi$ be the portfolio generated by $G$. The wealth of $\pi$ relative to the market portfolio is given by
  \begin{equation*}
    \log V^\pi_t = \log \bigg(\frac{G(\mu_t)}{G(\mu_0)}\bigg) - \frac{1}{2} \sum_{i,j=1}^d \int_0^t \frac{1}{G(\mu_s)} \frac{\partial^2 G(\mu_s)}{\partial x_i \partial x_j} \mu^i_s \mu^j_s \tau^\mu_{ij}(\d s), \qquad t \in [0,\infty).
  \end{equation*}
\end{theorem}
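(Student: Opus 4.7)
The plan is to reduce the claim to Proposition~\ref{prop: log of relative wealth} by applying the rough It\^o formula (Proposition~\ref{prop: general Ito formula for rough paths}) to $\log G(\mu)$ and then recognising a crucial algebraic simplification of the quadratic term involving $\pi^i\pi^j\tau^\mu_{ij}$.

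\textbf{Step 1 (reduce to a simpler integral).} By Proposition~\ref{prop: log of relative wealth}, it suffices to analyse $\int_0^t \frac{\pi_s}{\mu_s}\,\d\mu_s$ and $\sum_{i,j}\int_0^t \pi^i_s\pi^j_s\,\tau^\mu_{ij}(\d s)$. Writing $\pi^i_s/\mu^i_s = \partial_i\log G(\mu_s) + c_s$ with the scalar $c_s := 1 - \sum_k \mu^k_s\,\partial_k\log G(\mu_s)$ independent of $i$, and noting that $\sum_{i=1}^d \mu^i \equiv 1$ implies $\sum_i \d\mu^i_s = 0$ (in the sense of rough integrals, by Theorem~\ref{thm: Ito integral for smooth transformed RIE path}, since the left-point Riemann sums telescope to zero), the constant-in-$i$ term drops out and
\[
\int_0^t \frac{\pi_s}{\mu_s}\,\d\mu_s = \sum_{i=1}^d \int_0^t \partial_i\log G(\mu_s)\,\d\mu^i_s.
\]

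\textbf{Step 2 (apply the rough It\^o formula).} Since $\mu$ is a smooth function of $S$, the pair $(\mu,\D\mu(S))$ is in $\mathcal{V}^p_S\subset\mathcal{V}^q_S$, and, writing $\boldsymbol{\mu}$ for its canonical rough-path lift, Lemma~\ref{lem: Ito isometry for rough paths} together with the identity $[\log\boldsymbol{\mu}]_t = \tau^\mu([0,t])$ (established in Step~2 of the proof of Proposition~\ref{prop: log of relative wealth}) yields
\[
\d[\boldsymbol{\mu}]^{ij}_s = \mu^i_s\mu^j_s\,\tau^\mu_{ij}(\d s).
\]
Applying Proposition~\ref{prop: general Ito formula for rough paths} to the $C^{p+\epsilon}$ function $\log G$ evaluated along $\mu$, and expanding $\partial_{ij}\log G = \partial_{ij}G/G - (\partial_i G)(\partial_j G)/G^2$, gives
\begin{align*}
\log G(\mu_t) - \log G(\mu_0)
&= \sum_{i=1}^d\int_0^t \partial_i\log G(\mu_s)\,\d\mu^i_s\\
&\quad + \tfrac{1}{2}\sum_{i,j=1}^d \int_0^t \Bigl(\tfrac{\partial_{ij}G(\mu_s)}{G(\mu_s)} - \tfrac{\partial_i G(\mu_s)\,\partial_j G(\mu_s)}{G(\mu_s)^2}\Bigr)\mu^i_s\mu^j_s\,\tau^\mu_{ij}(\d s).
\end{align*}

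\textbf{Step 3 (algebraic reduction of the quadratic term).} Combining Steps~1--2 with the formula from Proposition~\ref{prop: log of relative wealth}, the desired identity will follow once we verify that
\[
\sum_{i,j=1}^d \pi^i_s\pi^j_s\,\tau^\mu_{ij}(\d s) = \sum_{i,j=1}^d \tfrac{\mu^i_s\mu^j_s\,\partial_i G(\mu_s)\,\partial_j G(\mu_s)}{G(\mu_s)^2}\,\tau^\mu_{ij}(\d s).
\]
Decomposing $\pi^i_s = \lambda^i_s + c_s\mu^i_s$ with $\lambda^i_s := \mu^i_s\,\partial_i\log G(\mu_s)$, the cross and pure-$\mu$ terms both vanish thanks to the key identity
\[
\sum_{j=1}^d \mu^j_s\,\tau^\mu_{ij}(\d s) = (\mu_s - e_i)^\top a(\d s)\sum_{j=1}^d \mu^j_s(\mu_s - e_j) = 0,
\]
since $\sum_j \mu^j_s(\mu_s - e_j) = \mu_s - \mu_s = 0$. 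This leaves only $\sum_{i,j}\lambda^i_s\lambda^j_s\,\tau^\mu_{ij}(\d s)$, which is precisely the right-hand side above.

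\textbf{Main obstacle.} The only nontrivial points are the identification $\d[\boldsymbol{\mu}]^{ij}_s = \mu^i_s\mu^j_s\,\tau^\mu_{ij}(\d s)$ (which rides on Lemma~\ref{lem: Ito isometry for rough paths} applied to $\mu = \exp\log\mu$, already used in Proposition~\ref{prop: log of relative wealth}) and the algebraic collapse in Step~3 via $\sum_j \mu^j\tau^\mu_{ij} = 0$; both are essentially bookkeeping once one sees the decomposition $\pi = \lambda + c\mu$. The routine verifications that $\partial_i\log G(\mu) \in \mathcal{V}^q_S$ and that products/quotients of controlled paths remain controlled (required to make sense of the rough integrals) follow from Example~\ref{ex: controlled path 1+epsilon} and Lemma~\ref{lem: product of controlled rough paths}.
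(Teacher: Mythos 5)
Your proposal is correct and follows essentially the same route as the paper: reduce via Proposition~\ref{prop: log of relative wealth}, drop the scalar term using $\sum_i\d\mu^i=0$, identify $[\mu]^{ij}$ with $\int\mu^i\mu^j\,\tau^\mu_{ij}(\d s)$ by pushing through $\mu^i=\exp(\log\mu^i)$, apply the rough It\^o formula to $\log G(\mu)$, and use $\sum_j\mu^j\tau^\mu_{ij}(\d s)=0$ to collapse the quadratic term. The paper organizes the algebra slightly differently (it writes $g=\nabla\log G(\mu)$ and substitutes the It\^o expansion directly into \eqref{eq: formula for log of relative wealth} rather than isolating Step~3 as a standalone claim), but the ingredients and the key identities are identical.
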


\begin{proof}
  Let $g = \nabla \log G(\mu)$, so that $g^i = \frac{\partial}{\partial x_i} \log G(\mu) = \frac{1}{G(\mu)} \frac{\partial G}{\partial x_i}(\mu)$ for each $i = 1, \ldots, d$. We can then rewrite $\eqref{eq: canonical function generated portfolio}$ as
  \begin{equation}\label{eq:pi^i in terms of mu^i and g^i}
    \pi^i = \mu^i \bigg(g^i + 1 - \sum_{k=1}^d \mu^k g^k\bigg),
  \end{equation}
  so that $\pi^i/\mu^i = g^i + 1 - \sum_{k=1}^d \mu^k g^k$. Since $\sum_{i=1}^d \mu^i_s = 1$ for all $s \geq 0$, we must have that $\sum_{i=1}^d \mu^i_{s,t} = 0$ for all $s < t$. Thus
  \begin{equation*}
    \int_0^t \frac{\pi_s}{\mu_s} \,\d \mu_s = \lim_{n \to \infty} \sum_{k=0}^{N_n-1} \sum_{i=1}^d \frac{\pi^i_{t^n_k}}{\mu^i_{t^n_k}} \mu^i_{t^n_k \wedge t,t^n_{k+1} \wedge t} = \lim_{n \to \infty} \sum_{k=0}^{N_n-1} \sum_{i=1}^d g^i_{t^n_k} \mu^i_{t^n_k \wedge t,t^n_{k+1} \wedge t} = \int_0^t g_s \,\d \mu_s.
  \end{equation*}
  We have from~\eqref{eq:defn tau^pi_ij} that $\sum_{j=1}^d \mu^j_s \tau^\mu_{ij}(\d s) = (\mu_s - e_i)^\top a(\d s) (\mu_s - \mu_s) = 0$. It follows from this and \eqref{eq:pi^i in terms of mu^i and g^i} that
  \begin{equation}\label{eq:sum pi^i pi^j tau^mu = sum g^i g^j mu^i mu^j tau^mu}
    \sum_{i,j=1}^d \pi^i_s \pi^j_s \tau^\mu_{ij}(\d s) = \sum_{i,j=1}^d g^i_s g^j_s \mu^i_s \mu^j_s \tau^\mu_{ij}(\d s).
  \end{equation}

  Recall from~\eqref{eq: [log mu] = tau^mu} that $[\log \mu]_t = \tau^\mu([0,t])$. By applying the It{\^o} formula for rough paths (Proposition~\ref{prop: general Ito formula for rough paths}) to $\mu^i = \exp(\log \mu^i)$, we see that the path $t \mapsto \mu^i_t - \int_0^t \mu^i_s \,\d \log \mu^i_s$ is of finite variation. By part (ii) of Proposition~\ref{prop: property of bracket of rough paths} and Lemma~\ref{lem: Ito isometry for rough paths}, we therefore have that
  \begin{equation}\label{eq:quad var of mu}
    [\mu]^{ij}_t = \int_0^t \mu^i_s \mu^j_s \,\d [\log \mu]^{ij}_s = \int_0^t \mu^i_s \mu^j_s \tau^\mu_{ij}(\d s).
  \end{equation}
  By the It{\^o} formula for rough paths (Proposition~\ref{prop: general Ito formula for rough paths}), we then have
  \begin{align*}  
    \log \bigg(\frac{G(\mu_t)}{G(\mu_0)}\bigg) &= \int_0^t g_s \,\d \mu_s + \frac{1}{2} \sum_{i,j=1}^d \int_0^t \bigg(\frac{1}{G(\mu_s)} \frac{\partial^2 G(\mu_s)}{\partial x_i \partial x_j} - g^i_s g^j_s\bigg) \,\d [\mu]^{ij}_s\\
    &= \int_0^t \frac{\pi_s}{\mu_s} \,\d \mu_s + \frac{1}{2} \sum_{i,j=1}^d \int_0^t \bigg(\frac{1}{G(\mu_s)} \frac{\partial^2 G(\mu_s)}{\partial x_i \partial x_j} - g^i_s g^j_s\bigg)\mu^i_s \mu^j_s \tau^\mu_{ij}(\d s).
  \end{align*}
  Combining this with~\eqref{eq: formula for log of relative wealth} and \eqref{eq:sum pi^i pi^j tau^mu = sum g^i g^j mu^i mu^j tau^mu}, we deduce the result.
\end{proof}

\section{Cover's universal portfolios and their optimality}\label{sec:Cover portfolio}

Like stochastic portfolio theory, Cover's universal portfolios \cite{Cover1991} aim to give general recipes to construct preference-free asymptotically ``optimal'' portfolios; see also \cite{Jamshidian1992} and \cite{Cover1996}. A first link between SPT and these universal portfolios was established in a pathwise framework based on F{\"o}llmer integration in \cite{Cuchiero2019} (see also \cite{Wong2015}). In this section we shall generalize the pathwise theory regarding Cover's universal portfolios developed in \cite{Cuchiero2019} to the present rough path setting.

Cover's universal portfolio is based on the idea of trading according to a portfolio which is defined as the average over a family $\mathcal{A}$ of admissible portfolios. In the spirit of \cite{Cuchiero2019}, we introduce pathwise versions of Cover's universal portfolios---that is, portfolios of the form
\begin{equation*}
  \pi^\nu_t := \frac{\int_{\mathcal{A}} \pi_t V^\pi_t \dd \nu(\pi)}{\int_{\mathcal{A}} V^\pi_t \dd \nu(\pi)}, \qquad t \in [0,\infty),
\end{equation*}
where $\nu$ is a given probability measure on $\mathcal{A}$. In order to find suitable classes $\mathcal{A}$ of admissible portfolios, we recall Assumption~\ref{ass: parameters} and make the following standing assumption throughout the entire section.

\begin{assumption}
  We fix $q' > q$ and $r' > r$ such that $\frac{2}{p} + \frac{1}{q'} > 1 $ and $\frac{1}{r'} = \frac{1}{p} + \frac{1}{q'}$.
\end{assumption}

\subsection{Admissible portfolios}\label{subsec:base set of universal protfolio}

As a first step to construct Cover's universal portfolios in our rough path setting, we need to find a suitable set of admissible portfolios. To this end, we set
\begin{equation*}
  \mathcal{V}_\mu^{q}([0,\infty);\Delta^d) := \Big\{(\pi,\pi^\prime) : \forall \, T > 0, \, (\pi,\pi^\prime)|_{[0,T]} \in \mathcal{V}_\mu^{q}([0,T];\Delta^d)\Big\}.
\end{equation*}
Then, for some fixed control function $c_\mu$ which controls the $p$-variation norm of the market portfolio $\mu$, and for some $M > 0$, we introduce a class of \emph{admissible portfolios} as the set
\begin{equation}\label{eq: configuration space for mu}
  \mathcal{A}^{M,q}(c_\mu) := \left\{(\pi,\pi') \in \mathcal{V}_\mu^{q}([0,\infty);\Delta^d) \,:\,
  \begin{array}{l}
  \Big|\frac{\pi_0}{\mu_0}\Big| + \Big|\Big(\frac{\pi}{\mu}\Big)^\prime_0\Big| \leq M,\\
  \sup_{s \leq t} \frac{|(\frac{\pi}{\mu})^\prime_{s,t}|^q}{c_\mu(s,t)} + \sup_{s \leq t} \frac{|R^{\frac{\pi}{\mu}}_{s,t}|^r}{c_\mu(s,t)} \leq 1
  \end{array}
  \right\}.
\end{equation}
Here $(\pi/\mu, (\pi/\mu)^\prime)$ denotes the product of the two $\mu$-controlled paths $(\pi, \pi')$ and $(\frac{1}{\mu},(\frac{1}{\mu})')$ (see Lemma~\ref{lem: product of controlled rough paths}). In particular, $(\pi/\mu)^\prime = \pi^\prime/\mu + \pi(1/\mu)^\prime$, and $R^{\frac{\pi}{\mu}}$ is the remainder of the controlled rough path $\pi/\mu$.

\begin{remark}
  We consider here controlled paths with respect to $\mu$, instead of with respect to $S$. As noted in Remark~\ref{remark: about how to express Ito integral wrt mu}, every $S$-controlled path $(\pi,\pi^\prime) \in \mathcal{V}_S^q$ can be used to define the integral $\int \frac{\pi_t}{\mu_t}\dd\mu_t$, and all the results in this section can also be established based on $\mathcal{V}_S^q$ with appropriate modifications. We choose to consider $(\pi,\pi^\prime) \in \mathcal{V}_\mu^q$ as a $\mu$-controlled path in order to slightly simplify the notation. It is straightforward to check that $\mathcal{V}_\mu^q \subseteq \mathcal{V}_S^q$.
\end{remark}

Let us recall from Definition~\ref{def: controlled path} that, for any $T > 0$,
\begin{equation*}
  \|(Y,Y^\prime)\|_{\mathcal{V}^q_\mu,[0,T]} = |Y_0| + |Y^\prime_0| + \|Y^\prime\|_{q,[0,T]} + \|R^Y\|_{r,[0,T]}
\end{equation*}
defines a complete norm on $\mathcal{V}_\mu^{q}([0,T];\Delta^d)$. We endow $\mathcal{A}^{M,q}(c_\mu) \subset \mathcal{V}_\mu^{q^\prime}([0,\infty);\Delta^d)$ with the seminorms
\begin{equation}\label{eq: seminorms}
  p_T^{\mu,q^\prime}((\pi,\pi^\prime)) := \Big\|\frac{\pi}{\mu}, \Big(\frac{\pi}{\mu}\Big)^\prime\Big\|_{\mathcal{V}_\mu^{q^\prime},[0,T]}, \qquad T > 0.
\end{equation}
The reason for taking $q^\prime > q$ is that it will allow us to obtain a compact embedding of $\mathcal{A}^{M,q}(c_\mu)$ into $\mathcal{V}^{q^\prime}_{\mu}$. This compactness of the set of admissible portfolios plays a crucial role in obtaining optimality of universal portfolios.

\smallskip

Let us discuss some examples of admissible portfolios. We first check that the functionally generated portfolios treated in \cite{Cuchiero2019} belong to $\mathcal{A}^{M,q}(c_\mu)$ provided that the control function $c_\mu$ is chosen appropriately. Recall that $C^k(\overline{\Delta}^d_+;\R_+)$ denotes the space of $k$-times continuously differentiable $\R_+$-valued functions on the closed (non-negative) simplex $\overline \Delta^d_+$, and that $\|G\|_{C^k} := \max_{0 \leq n \leq k} \|\D^nG\|_{\infty}$.

\begin{lemma}\label{lem: functionally generated portfolios are in base set}
  Let $K > 0$ be a constant, and let
  \begin{equation*}
    \mathcal{G}^K = \Big\{G \in C^3(\overline{\Delta}^d_+;\R_+) : \|G\|_{C^3} \leq K, \, G \geq \frac{1}{K}\Big\}.
  \end{equation*}
  Then the portfolio $\pi$ generated by $G$, as defined in \eqref{eq: canonical function generated portfolio}, belongs to $\mathcal{A}^{M,p}(c_\mu)$ for a suitable control function $c_\mu$ and constant $M$. More precisely, there exists a control function of the form $c_\mu(\hspace{1pt}\cdot\hspace{1pt}, \hspace{1pt}\cdot\hspace{1pt}) = C \|\mu\|_{p,[\hspace{1pt}\cdot\hspace{1pt}, \hspace{1pt}\cdot\hspace{1pt}]}^p$ and a constant $M > 0$, such that $C$ and $M$ only depend on $K$, and
  \begin{equation*}
    \Big\{(\pi^G,(\pi^G)^\prime) : \pi^G \text{ defined in \eqref{eq: canonical function generated portfolio} for some } G \in \mathcal{G}^K\Big\} \subset \mathcal{A}^{M,p}(c_\mu).
  \end{equation*}
  Note that here we take $q = p$ and $r = p/2$.
\end{lemma}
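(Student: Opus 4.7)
The approach is to observe that $\pi^G/\mu$ is literally a $C^2$ function of $\mu$, and then to read off all the admissibility bounds from standard composition and Taylor estimates, with constants tracked explicitly in terms of $K$. Concretely, setting $g^i(x) := \frac{1}{G(x)}\frac{\partial G}{\partial x_i}(x)$, formula \eqref{eq: canonical function generated portfolio} can be rewritten as $(\pi^G_t)^i/\mu^i_t = h^i(\mu_t)$, where
\begin{equation*}
h^i(x) := g^i(x) + 1 - \sum_{k=1}^d x^k g^k(x), \qquad x \in \overline{\Delta}^d_+, \ \ i = 1, \dots, d.
\end{equation*}
The hypotheses $\|G\|_{C^3} \leq K$ and $G \geq 1/K$, combined with the quotient and Leibniz rules, yield $h \in C^2(\overline{\Delta}^d_+;\R^d)$ with $\|h\|_{C^2} \leq C_1(K)$ for a constant $C_1(K)$ depending only on $K$ (and the fixed dimension $d$).

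Since $h \in C^2 \subset C^{1+1}$ and $\mu$ has finite $p$-variation on compacts, Example~\ref{ex: controlled path 1+epsilon} (applied with $\epsilon = 1$ and $q = p$, noting $1 \in (p-2,1]$ because $p < 3$) gives $(h(\mu), \D h(\mu)) \in \mathcal{V}^p_\mu$; that is, the pair $(\pi^G/\mu,\,(\pi^G/\mu)') := (h(\mu), \D h(\mu))$ is a $\mu$-controlled path. Combined with the trivial lift of $\mu$ and Lemma~\ref{lem: product of controlled rough paths}, this also furnishes a valid Gubinelli derivative for $\pi^G$ itself, so $(\pi^G,(\pi^G)') \in \mathcal{V}^p_\mu([0,\infty);\Delta^d)$. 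Applying the mean-value theorem to $\D h$ and Taylor's theorem to $h$ gives, for all $0 \leq s \leq t$,
\begin{equation*}
|(\D h(\mu))_{s,t}| \leq \|\D^2 h\|_\infty |\mu_{s,t}|, \qquad |R^{h(\mu)}_{s,t}| \leq \tfrac{1}{2} \|\D^2 h\|_\infty |\mu_{s,t}|^2,
\end{equation*}
so that, with $q = p$, $r = p/2$ and $|\mu_{s,t}|^p \leq \|\mu\|^p_{p,[s,t]}$,
\begin{equation*}
|(\pi^G/\mu)'_{s,t}|^p \leq C_1(K)^p \|\mu\|^p_{p,[s,t]}, \qquad |R^{\pi^G/\mu}_{s,t}|^{p/2} \leq \bigl(C_1(K)/2\bigr)^{p/2} \|\mu\|^p_{p,[s,t]}.
\end{equation*}
Choosing $C := 2\bigl(C_1(K)^p + (C_1(K)/2)^{p/2}\bigr)$ and $c_\mu(s,t) := C\,\|\mu\|^p_{p,[s,t]}$, the function $c_\mu$ is a control function (since $\|\mu\|^p_{p,\cdot}$ is superadditive), and the two supremum estimates in \eqref{eq: configuration space for mu} sum to at most $1$. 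Finally, $|h(\mu_0)| + |\D h(\mu_0)| \leq 2\|h\|_{C^2} \leq 2C_1(K) =: M$, so setting this value of $M$ yields $(\pi^G,(\pi^G)') \in \mathcal{A}^{M,p}(c_\mu)$ uniformly in $G \in \mathcal{G}^K$, with $C$ and $M$ depending only on $K$.

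\emph{Main obstacle.} The argument is essentially bookkeeping, and the only step requiring genuine care is the uniform $C^2$-bound on $h$: one must propagate the positivity bound $G \geq 1/K$ through two differentiations of the quotients $\partial_i G/G$ to obtain $\|h\|_{C^2} \leq C_1(K)$ with $C_1$ depending only on $K$. Once this is in place, the remaining estimates are immediate from the composition and Taylor estimates above.
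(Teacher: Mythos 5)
Your proof is correct and follows essentially the same approach as the paper: recognize $\pi^G/\mu$ as a $C^2$ function of $\mu$, use Taylor estimates to bound the Gubinelli derivative increment and the remainder by $\|\mu\|_{p,[s,t]}^p$ with constants depending only on $K$, and extract $M$ and $C$ accordingly. The only stylistic difference is that the paper decomposes $\pi^G/\mu = g + (1 - \mu \cdot g)\1$ (with $g = \nabla\log G(\mu)$) and estimates the pieces $g$ and $\mu\cdot g$ separately via Lemma~\ref{lem: product of controlled rough paths}, whereas you bundle the whole expression into a single $C^2$ function $h$ and estimate it in one shot; this is a bit more economical and equivalent in substance. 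One small point worth making explicit (which you use implicitly) is that the Gubinelli derivative $\D h(\mu)$ you assign to $\pi^G/\mu$ agrees with the one obtained from the product construction $(\pi^G)'/\mu + \pi^G(1/\mu)'$ used in the definition \eqref{eq: configuration space for mu}; this is a straightforward chain-rule/product-rule consistency check, but since the admissibility bounds are stated in terms of the product-constructed derivative, it is the identity that justifies reading off the bounds from $h$.
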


\begin{proof}
  Fix $G \in \mathcal{G}^K$, and let $\pi$ be the associated portfolio as defined in \eqref{eq: canonical function generated portfolio}. Since $\pi$ is defined as a $C^2$ function of $\mu$, we know immediately that it is a $\mu$-controlled path.

  A simple calculation shows that
  \begin{equation*}
    \frac{\pi_t}{\mu_t} = g_t + (1 - \mu_t \cdot g_t)\1,
  \end{equation*}
  where we write $\1 = (1, \ldots, 1)$ and $g_t = \nabla \log G(\mu_t)$, and we use $\cdot$ to denote the standard inner product on $\R^d$. The pair $(\1,0)$ is trivially a $\mu$-controlled path with $\1' = 0$ and $R^{\1} = 0$, and thus clearly satisfies the required bounds in \eqref{eq: configuration space for mu} with an arbitrary control function. It thus suffices to show that $(g,g^\prime)$ and $(\mu \cdot g, (\mu \cdot g)^\prime)$ satisfy the required bounds with control functions $c_\mu^1$ and $c_\mu^2$ respectively, since then $c_\mu := c_\mu^1 + c_\mu^2$ gives the desired control function.

  We begin with $(g,g')$. Let $F := \nabla \log G$, so that $g = F(\mu)$ and $g^\prime = \D F(\mu)$. By Taylor expansion, we can verify that, for all $s \leq t$,
  \begin{equation}\label{eq:bounds on (g,g')}
    |g_{s,t}| \leq \|\D F\|_\infty |\mu_{s,t}|, \qquad |g^\prime_{s,t}| \leq \|\D^2F\|_{\infty} |\mu_{s,t}|, \qquad |R^g_{s,t}| \leq \|\D^2F\|_{\infty} |\mu_{s,t}|^2.
  \end{equation}
  Note that $F$, $\D F$ and $\D^2F$ only depend on $\D G$, $\D^2G$, $\D^3G$ and $1/G$, and therefore, since $\|G\|_{C^3} \leq K$ and $G \geq 1/K$, there exists a constant $C = C(K)$, which only depends on $K$, such that $\|F\|_\infty \leq C$, $\|\D F\|_\infty \leq C$ and $\|\D^2F\|_{\infty} \leq C$. It follows that we can choose $c^1_\mu(s,t) = C \|\mu\|_{p,[s,t]}^p$. Note also that $\|g\|_{\infty} \leq C$ and $\|g'\|_{\infty} \leq C$.

  We now turn to $(\mu \cdot g, (\mu \cdot g)^\prime)$. Noting that $\mu$ is trivially a $\mu$-controlled path with $\mu' = 1$ and $R^\mu = 0$, and that $R^{\mu \cdot g}_{s,t} = \mu_s \cdot R^g_{s,t} + \mu_{s,t} \cdot g_{s,t}$, we deduce that
  \begin{equation*}
    |(\mu \cdot g)^\prime_{s,t}| \leq |g_{s,t}| + \|\mu\|_\infty |g'_{s,t}| + \|g'\|_\infty |\mu_{s,t}|, \qquad |R^{\mu \cdot g}_{s,t}| \leq \|\mu\|_{\infty} |R^g_{s,t}| + |\mu_{s,t}||g_{s,t}|.
  \end{equation*}
  Since $\mu_t$ takes values in the bounded set $\Delta^d_+$, we can use the bounds in \eqref{eq:bounds on (g,g')} to show that there exists a constant $L = L(K)$, depending only on $K$, such that $|(\mu \cdot g)^\prime_{s,t}| \leq L |\mu_{s,t}|$ and $|R^{\mu \cdot g}_{s,t}| \leq L |\mu_{s,t}|^2$. It follows that we may take $c_\mu^2(s,t) := L \|\mu\|_{p,[s,t]}^p$. Finally, we note that the initial values $\pi_0/\mu_0 = g_0 + (1 - \mu_0 \cdot g_0)\1 = F(\mu_0) + (1 - \mu_0 \cdot F(\mu_0))\1$ and $(\pi/\mu)^\prime_0 = \D F(\mu_0) - (F(\mu_0) + \mu_0 \D F(\mu_0))\1 $ are also bounded by a constant $M$ depending only on $K$.
\end{proof}

One particular advantage of rough integration is that the admissible strategies need not be of gradient type, giving us more flexibility in choosing admissible portfolios compared to previous approaches relying on F{\"o}llmer integration.

\begin{example}[Functionally controlled portfolios]\label{ex: functionally controlled portfolios}
  Let
  \begin{equation*}
    \mathcal{F}^{2,K} := \Big\{(\pi^F, \pi^{F,\prime}) : F \in C^2(\overline{\Delta}^d_+;\R^d), \, \|F\|_{C^2} \leq K\Big\}
  \end{equation*}
  for a given constant $K > 0$, where
  \begin{align}\label{eq:portfoliogen}
    (\pi^F_t)^i = \mu^i_t \bigg(F^i(\mu_t) + 1 - \sum_{j=1}^d \mu^j_t F^j(\mu_t)\bigg)
  \end{align}
  for $t \geq 0$ and $i = 1, \ldots, d$. Then $\mathcal{F}^{2,K} \subset \mathcal{A}^{M,p}(c_\mu)$, where we can again take $q = p$. The point here is that we can consider all $C^2$-functions $F$, rather than requiring that $F$ is of the form $F = \nabla \log G$ for some function $G$. One can verify that $\mathcal{F}^{2,K} \subset \mathcal{A}^{M,p}(c_\mu)$ for a suitable control function $c_\mu$ by following the proof of Lemma~\ref{lem: functionally generated portfolios are in base set} almost verbatim.
\end{example}

\begin{example}[Controlled equation generated portfolios]
  Let us define
  \begin{equation*}
    \mathcal{C}^{3,K} := \{f \in C^3(\R^d; \mathcal{L}(\R^d;\R^d)) : \|f\|_{C^3} \leq K\}.
  \end{equation*}
  For a given $f \in \mathcal{C}^{3,K}$, a classical result in rough path theory is that the controlled differential equation with the vector field $f$, driven by $\mu$,
  \begin{equation}\label{eq: controlled equation}
    \d Y^f_t = f(Y^f_t) \dd \mu_t, \qquad Y_0 = \xi \in \Delta^d,
  \end{equation}
  admits a unique solution $(Y^f,(Y^f)^\prime) = (\xi + \int_0^\cdot f(Y^f_u) \dd \mu_u, f(Y^f))$, which is itself a $\mu$-controlled path. Moreover, writing $A^\mu_{s,t} = \int_s^t \mu_{s,u} \otimes d \mu_u$ for the canonical rough path lift of $\mu$ (see Section~\ref{sec Rough paths associated to controlled paths}), and $c_\mu(s,t) := \|\mu\|_{p,[s,t]}^p + \|A^\mu\|_{\frac{p}{2},[s,t]}^{\frac{p}{2}}$, for every $T > 0$, there exists a constant $\Gamma_T$ depending on $p$, $c_\mu([0,T])$ and $K$, such that
  \begin{equation*}
    \sup_{(s,t) \in \Delta_{[0,T]}} \frac{|(Y^f)^\prime_{s,t}|^p}{\Gamma_T c_\mu(s,t)} + \sup_{(s,t) \in \Delta_{[0,T]}} \frac{|R^{Y^f}_{s,t}|^{\frac{p}{2}}}{\Gamma_T c_\mu(s,t)} \leq 1.
  \end{equation*}
  Consequently, as in the proof of Lemma~\ref{lem: functionally generated portfolios are in base set} one can show that there exists an increasing function $\Gamma \colon [0,\infty) \to \R_+$, depending on $p$, $c_\mu$ and $K$  such that
  \begin{equation*}
    \sup_{0 \leq s \leq t < \infty} \frac{\big|(\frac{\pi^f}{\mu})^\prime_{s,t}\big|^p}{\tilde{c}_\mu(s,t)} + \sup_{0 \leq s \leq t < \infty} \frac{\big|R^{\frac{\pi^f}{\mu}}_{s,t}\big|^{\frac{p}{2}}}{\tilde{c}_\mu(s,t)} \leq 1,
  \end{equation*}
  where $\pi^f := \mu(Y^f + (1 - \mu \cdot Y^f)\1)$ and $\tilde{c}_\mu(s,t) := \Gamma_t c_\mu(s,t)$ is again a control function. This implies that the set
  \begin{equation*}
    \big \{\pi^f = \mu(Y^f + (1 - \mu \cdot Y^f)\1) : Y^f \text{ is the solution of \eqref{eq: controlled equation} for some } f \in \mathcal{C}^{3,K}\big\} \subset \mathcal{A}^{M,p}(\tilde{c}_\mu)
  \end{equation*}
  for a suitable constant $M > 0$.
\end{example}

\subsection{Asymptotic growth of universal portfolios}

To investigate the asymptotic growth rates of our pathwise versions of Cover's universal portfolio, we first require some auxiliary results---in particular the compactness of the set of admissible portfolios.

\begin{lemma}
  The set $\mathcal{A}^{M,q}(c_\mu)$ is compact in the topology generated by the family of seminorms $\{p_T^{\mu,q^\prime} : T \in \N\}$ as defined in \eqref{eq: seminorms}, where we recall that $q < q'$.
\end{lemma}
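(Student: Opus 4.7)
The plan is to reduce compactness to sequential compactness (since the topology is induced by a countable family of seminorms, hence metrizable on the bounded set $\mathcal{A}^{M,q}(c_\mu)$), extract a pointwise/uniform limit of a sequence via Arzel\`a–Ascoli, and then upgrade uniform convergence to $q'$- and $r'$-variation convergence by an interpolation inequality. Throughout I will work with the reparametrization $Y_n := \pi_n/\mu$, $Y_n' := (\pi_n/\mu)'$, since this is precisely the object constrained by the defining inequalities of $\mathcal{A}^{M,q}(c_\mu)$.

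\emph{Extraction of a limit.} Let $((\pi_n,\pi_n'))_{n\in\mathbb{N}}\subset \mathcal{A}^{M,q}(c_\mu)$. By definition, $|Y_n'(0)| \le M$ and $|(Y_n')_{s,t}|^q \le c_\mu(s,t)$ for all $s\le t$. Continuity of the control $c_\mu$ on the diagonal gives equicontinuity of the family $\{Y_n'\}$ on every $[0,T]$, and the bound at $0$ together with the variation bound gives uniform boundedness. Similarly, the two-parameter family $\{R^{Y_n}\}$ satisfies $|R^{Y_n}_{s,t}|^r \le c_\mu(s,t)$, so it is uniformly bounded and equicontinuous on each $\Delta_{[0,T]}$ (equicontinuity in the second variable, say, follows from the identity $R^Y_{s,u} - R^Y_{s,t} = Y'_{s,t}\mu_{t,u} + R^Y_{t,u}$ derived from the controlled path relation, and analogously in the first variable). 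A diagonal application of Arzel\`a–Ascoli over $T \in \mathbb{N}$, together with compactness of the ranges of $Y_n(0)$ and $Y_n'(0)$, furnishes a subsequence (still denoted $n$) along which $Y_n'\to Y'$ uniformly on compacts, $R^{Y_n} \to R$ uniformly on compacts of $\Delta_{[0,\infty)}$, and $Y_n(0)\to Y_0$, $Y_n'(0)\to Y_0'$. Define
\begin{equation*}
Y_t := Y_0 + Y_0'\,\mu_{0,t} + R(0,t), \qquad t \in [0,\infty),
\end{equation*}
and $\pi := \mu Y$. Passing to the limit in $(Y_n)_{s,t} = (Y_n')_s \mu_{s,t} + R^{Y_n}_{s,t}$ shows that $R = R^Y$ is indeed the remainder for $(Y,Y')$, so $(Y,Y')$ is a $\mu$-controlled path, and therefore $(\pi,\pi')$ with $\pi' := \mu'Y + \mu Y'$ is well defined.

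\emph{The limit lies in $\mathcal{A}^{M,q}(c_\mu)$.} The pointwise inequalities defining $\mathcal{A}^{M,q}(c_\mu)$ are preserved under pointwise convergence of the increments of $Y_n'$ and of $R^{Y_n}$, and the constraint at $0$ passes to the limit directly. Since $Y_n(t)\in\Delta^d$ and $\Delta^d$ is closed, $Y_t\in\Delta^d$, so $\pi_t = \mu_t Y_t \in \Delta^d$ as required.

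\emph{Convergence in $p_T^{\mu,q'}$.} Fix $T > 0$. The elementary interpolation inequality
\begin{equation*}
\|g\|_{q',[0,T]}^{q'} \le (2\|g\|_{\infty,[0,T]})^{q'-q}\,\|g\|_{q,[0,T]}^{q},
\end{equation*}
applied to $g := Y_n' - Y'$, combines a $q$-variation factor which stays bounded (both $Y_n'$ and $Y'$ belong to $\mathcal{A}^{M,q}(c_\mu)$) with a supremum factor which vanishes by uniform convergence on $[0,T]$. The analogous two-parameter interpolation
\begin{equation*}
\|R^{Y_n} - R^Y\|_{r',[0,T]}^{r'} \le (2\|R^{Y_n} - R^Y\|_{\infty,\Delta_{[0,T]}})^{r'-r}\,\|R^{Y_n} - R^Y\|_{r,[0,T]}^{r}
\end{equation*}
handles the remainder term. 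Together with $Y_n(0)\to Y_0$ and $Y_n'(0)\to Y_0'$, this yields $p_T^{\mu,q'}((\pi_n - \pi, \pi_n' - \pi'))\to 0$ for every $T$.

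\emph{Main obstacle.} The only nonroutine technical step is establishing the uniform equicontinuity of the two-parameter family $\{R^{Y_n}\}$ on $\Delta_{[0,T]}$ from the superadditive control bound alone (hence the use of the Chen-type identity for remainders above). Once this is in place, the Arzel\`a–Ascoli extraction and the interpolation inequality do all the remaining work; the strict inequality $q' > q$ (and correspondingly $r' > r$) is essential precisely to activate this interpolation.
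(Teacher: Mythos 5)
Your proof follows essentially the same route as the paper: reduce to sequential compactness, extract a subsequence converging uniformly on compacts, and then upgrade uniform convergence to $q'$- and $r'$-variation convergence via the interpolation inequality (which is precisely why the strict inequalities $q'>q$, $r'>r$ are needed). The only substantive difference is that the paper invokes \cite[Proposition~5.28]{Friz2010} as a black box to extract the convergent subsequence, while you run the Arzel\`a--Ascoli argument by hand, using the Chen-type identity $R^Y_{s,u}-R^Y_{s,t}=Y'_{s,t}\mu_{t,u}+R^Y_{t,u}$ to get equicontinuity of the two-parameter remainders; both routes rest on the same underlying mechanism.

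There is, however, a local error in the $\Delta^d$-valuedness step. You assert ``Since $Y_n(t)\in\Delta^d$ and $\Delta^d$ is closed, $Y_t\in\Delta^d$, so $\pi_t=\mu_t Y_t\in\Delta^d$.'' Neither claim is right: $Y_n=\pi_n/\mu$ (component-wise division) does \emph{not} take values in $\Delta^d$ --- the sum $\sum_i \pi_n^i/\mu^i$ is not $1$ in general --- and even if $Y_t$ were in $\Delta^d$, multiplying component-wise by $\mu_t$ would not keep it there. The fix is straightforward and was implicitly what you wanted: since $\pi_n(t)=\mu(t)\,Y_n(t)\to\mu(t)\,Y(t)=\pi(t)$ pointwise and each $\pi_n(t)\in\Delta^d$ with $\Delta^d$ closed, one concludes $\pi(t)\in\Delta^d$. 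With that correction the argument is sound. A minor further remark: the equicontinuity of $\{Y_n'\}$ uses that $c_\mu$ vanishes continuously on the diagonal, which is not part of the abstract definition of a control function but does hold for the specific $c_\mu$ used here (e.g.~$\|\mu\|_{p,[\cdot,\cdot]}^p$ for continuous $\mu$); the paper sidesteps this by citing the Friz--Victoir compactness result directly.
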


\begin{proof}
  \textit{Step~1}: We first show that the set
  \begin{equation*}
    \mathcal{A} := \bigg\{(Y,Y') \in \mathcal{V}_\mu^{q}([0,\infty);\R^d) \,:\, |Y_0| + |Y^\prime_0| \leq M \quad \text{and} \quad \sup_{s \leq t} \frac{|Y^\prime_{s,t}|^q}{c_\mu(s,t)} + \sup_{s \leq t} \frac{|R^Y_{s,t}|^r}{c_\mu(s,t)} \leq 1\bigg\}
  \end{equation*}
  is compact with respect to the topology generated by the seminorms $\|\hspace{1pt}\cdot\hspace{1pt},\hspace{1pt}\cdot\hspace{1pt}\|_{\mathcal{V}_\mu^{q^\prime},[0,T]}$ for $T \in \N$. It suffices to show that for every fixed $T \in \N$, the set
  \begin{align*}
    \mathcal{A}_T :=  \bigg\{ (Y,Y') \in \mathcal{V}_\mu^{q}([0,T];\R^d) \,:\, &|Y_0| + |Y^\prime_0| \leq M \quad \text{and}\\
    &\sup_{(s,t) \in \Delta_{[0,T]}} \frac{|Y^\prime_{s,t}|^q}{c_\mu(s,t)} + \sup_{(s,t) \in \Delta_{[0,T]}} \frac{|R^Y_{s,t}|^r}{c_\mu(s,t)} \leq 1\bigg\}
  \end{align*}
  is compact with respect to the norm $\|\hspace{1pt}\cdot\hspace{1pt},\hspace{1pt}\cdot\hspace{1pt}\|_{\mathcal{V}_\mu^{q^\prime},[0,T]}$. We first note that, for all $(Y,Y^\prime) \in \mathcal{A}_T$,
  \begin{equation*}
    \|Y^\prime\|_{q,[0,T]} \leq c_\mu(0,T)^{\frac{1}{q}},\quad \|Y^\prime\|_{\infty,[0,T]} \leq M + c_\mu(0,T)^{\frac{1}{q}} \quad \text{and} \quad \|R^Y\|_{r,[0,T]} \leq c_\mu(0,T)^{\frac{1}{r}},
  \end{equation*}
  where the second bound follows from the fact that $|Y^\prime_t| \leq |Y^\prime_0| + |Y^\prime_{0,t}| \leq M + \|Y^\prime\|_{q,[0,T]}$. The $p$-variation of $Y$ can also be controlled as follows. From $Y_{s,t} = Y'_s \mu_{s,t} + R^Y_{s,t}$, we have
  \begin{equation*}
    |Y_{s,t}|^p \leq 2^{p-1} \big(\|Y^\prime\|^p_{\infty,[0,T]} |\mu_{s,t}|^p + |R^Y_{s,t}|^p\big), \qquad (s,t) \in \Delta_{[0,T]},
  \end{equation*}
  and hence
  \begin{equation*}
    \|Y\|_{p,[0,T]} \leq 2^{\frac{p-1}{p}}\big(\|Y^\prime\|_{\infty,[0,T]} \|\mu\|_{p,[0,T]} + \|R^Y\|_{p,[0,T]}\big) \leq 2^{\frac{p-1}{p}} \big(\|Y^\prime\|_{\infty,[0,T]} \|\mu\|_{p,[0,T]} + \|R^Y\|_{r,[0,T]}\big),
  \end{equation*}
  since $r < p$ (see e.g.~\cite[Remark~2.5]{Chistyakov1998}), and thus
  \begin{equation*}
    \|Y\|_{\infty,[0,T]} \leq M + \|Y\|_{p,[0,T]} \leq M + 2^{\frac{p-1}{p}} \Big((M + c_\mu(0,T)^{\frac{1}{q}}) \|\mu\|_{p,[0,T]} + c_\mu(0,T)^{\frac{1}{r}}\Big).
  \end{equation*}
  Therefore, by \cite[Proposition~5.28]{Friz2010}, every sequence $(Y^n,Y^{n,\prime})_{n \geq 1} \subset \mathcal{A}_T$ has a convergent subsequence, which we still denote by $(Y^n,Y^{n,\prime})_{n \geq 1}$, and limits $Y \in C^{p\textup{-var}}([0,T];\mathbb{R}^d)$ and $Y^\prime \in C^{q\textup{-var}}([0,T];\mathbb{R}^d)$, such that $|Y^n_0 - Y_0| + \|Y^n - Y\|_{p',[0,T]} \to 0$ and $|Y^{n,\prime}_0 - Y^\prime_0| + \|Y^{n,\prime} - Y^\prime\|_{q',[0,T]} \to 0$ respectively as $n \to \infty$, for an arbitrary $p^\prime > p$. Since
  \begin{align*}
    |R^{Y^n}_{s,t} - R^{Y^{n+1}}_{s,t}| &\leq |Y^{n,\prime}_s \mu_{s,t} - Y^{n+1,\prime}_s \mu_{s,t}| + |Y^n_{s,t} - Y^{n+1}_{s,t}|\\
    &\leq |Y^{n,\prime}_s - Y^{n+1,\prime}_s| |\mu_{s,t}| + |Y^n_{t} - Y^{n+1}_{t}|+|Y^n_{s} - Y^{n+1}_{s}| \, \longrightarrow \, 0
  \end{align*}
  as $n \to \infty$, uniformly in $(s,t) \in \Delta_{[0,T]}$, we have that
  \begin{align*}
    \|R^{Y^n} - R^{Y^{n+1}}\|_{r^\prime,[0,T]} &\leq \| R^{Y^n} - R^{Y^{n+1}}\|_{r,[0,T]}^{\frac{r}{r^\prime}} \sup_{(s,t) \in \Delta_{[0,T]}}|R^{Y^n}_{s,t} - R^{Y^{n+1}}_{s,t}|^{\frac{r^\prime - r}{r^\prime}}\\
    &\leq 2^{\frac{r}{r^\prime}} c_\mu(0,T)^{\frac{1}{r^\prime}} \sup_{(s,t) \in \Delta_{[0,T]}}|R^{Y^n}_{s,t} - R^{Y^{n+1}}_{s,t}|^{\frac{r^\prime - r}{r^\prime}} \, \longrightarrow \, 0
  \end{align*}
  as $n \to \infty$. Thus, $R^{Y^n}$ also converges to some $R^Y$ in $r'$-variation.

  To see that the limit $(Y,Y') \in \mathcal{A}_T$, we simply note that
  \begin{align*}
    \frac{|Y'_{s,t}|^q}{c_\mu(s,t)} + \frac{|R^Y_{u,v}|^r}{c_\mu(u,v)} &= \lim_{n \to \infty} \bigg(\frac{|Y^{n,\prime}_{s,t}|^q}{c_\mu(s,t)} + \frac{|R^{Y^n}_{u,v}|^r}{c_\mu(u,v)}\bigg) \leq 1,
  \end{align*}
  and then take the supremum over $(s,t) \in \Delta_{[0,T]}$ and $(u,v) \in \Delta_{[0,T]}$ on the left-hand side.

  Thus, $\mathcal{A}_T$ is compact with respect to $p^{\mu,q'}_T$, and $\mathcal{A}$ is then compact in the topology generated by the seminorms $p^{\mu,q'}_T$ for $T \in \N$.

  \textit{Step~2:} Now suppose that $\{(\pi^n,\pi^{n,\prime})\}_{n \in \N}$, is a sequence of portfolios in $\mathcal{A}^{M,q}(c_\mu)$. Correspondingly, $\{(\frac{\pi^n}{\mu},(\frac{\pi^n}{\mu})^{\prime})\}_{n \in \N}$, is then a sequence in $\mathcal{A}$ which, by the result in Step~1 above, admits a convergent subsequence with respect to the seminorms $\|\hspace{1pt}\cdot\hspace{1pt},\hspace{1pt}\cdot\hspace{1pt}\|_{\mathcal{V}_\mu^{q^\prime},[0,T]}$ for $T \in \N$. Since $\|\frac{\pi}{\mu},(\frac{\pi}{\mu})^{\prime}\|_{\mathcal{V}_\mu^{q^\prime},[0,T]} = p^{\mu,q^\prime}_T((\pi,\pi^\prime))$, the convergence also applies to the corresponding subsequence of $\{(\pi^n,\pi^{n,\prime})\}_{n \in \N}$ with respect to the seminorms $\{p^{\mu,q^\prime}_T\}_{T \in \N}$. Let $(\phi,\phi^\prime)$ be the limit of (the convergent subsequence of) $\{(\frac{\pi^n}{\mu},(\frac{\pi^n}{\mu})^{\prime})\}_{n \in \N}$. It is then easy to see that $\phi \mu$, the product of controlled paths $(\phi,\phi^\prime)$ and $(\mu,I)$, is a cluster point of $\{(\pi^n,\pi^{n,\prime})\}_{n \in \N}$ in $\mathcal{A}^{M,q}(c_\mu)$ with respect to the seminorms $\{p^{\mu,q^\prime}_T\}_{T \in \N}$.
\end{proof}

In the next auxiliary result, we establish continuity of the relative wealth of admissible portfolios with respect to the market portfolio. To this end, we recall the family of seminorms $\{p^{\mu,q^\prime}_T\}_{T > 0}$, defined in \eqref{eq: seminorms}, and, for a given sequence $\beta = \{\beta_N\}_{N \in \N}$ with $\beta_N  > 0$ for all $N \in \N$ and $\lim_{N \to \infty} \beta_N = \infty$, we introduce a metric $d_\beta$ on $\mathcal{A}^{M,q}(c_\mu)$ via
\begin{equation*}
  d_\beta((\pi,\pi^\prime), (\phi,\phi^\prime)) := \sup_{N \ge 1} \frac{1}{\beta_N \gamma_N} p^{\mu,q^\prime}_N((\pi,\pi^\prime) - (\phi,\phi^\prime)),
\end{equation*}
where
\begin{equation*}
  \gamma_N := 1 + M + c_\mu(0,N)^{\frac{1}{q}} + c_{\mu}(0,N)^{\frac{1}{r}}.
\end{equation*}
Since $p^{\mu,q^\prime}_N((\pi,\pi^\prime)) \leq \gamma_N$, we have that $d_\beta((\pi,\pi^\prime), (\phi,\phi^\prime)) < \infty$ for all portfolios $(\pi,\pi^\prime), (\phi,\phi^\prime) \in \mathcal{A}^{M,q}(c_\mu)$. The metric $d_\beta$ is thus well-defined on $\mathcal{A}^{M,q}(c_\mu)$. Moreover, it is not hard to see that the topology induced by the metric $d_\beta$ coincides with the topology generated by the family of seminorms $\{p^{\mu,q^\prime}_T\}_{T \in \N}$, so that $(\mathcal{A}^{M,q}(c_\mu), d_\beta)$ is a compact metric space. For $T > 0$, we also denote
\begin{equation}\label{eq:defn xi_T}
  \xi_T := \|\mu\|_{p,[0,T]} + \|A^\mu\|_{\frac{p}{2},[0,T]} + \sum_{i=1}^d [\mu]^{ii}_T.
\end{equation}

\begin{lemma}\label{lem: continuity of relative wealth}
  For any $T > 0$, we have that the estimate
  \begin{equation}\label{eq: Lipschitz estimate of log relative wealth}
    |\log V^\pi_T - \log V^\phi_T| \leq C \beta_N \gamma_N^2 \xi_T \hspace{1pt} d_\beta((\pi,\pi'),(\phi,\phi'))
  \end{equation}
  holds for all $(\pi,\pi'), (\phi,\phi') \in \mathcal{A}^{M,q}(c_\mu)$, for some constant $C$ which depends only on $p, q', r'$ and the dimension $d$, where $N = \lceil T \rceil$, and $V^\pi$ denotes the relative wealth process as defined in \eqref{eq:defn relative wealth process}. In particular, the map from $\mathcal{A}^{M,q}(c_\mu) \to \R$ given by $(\pi,\pi') \mapsto V^\pi_T$ is continuous with respect to the metric $d_{\beta}$.
\end{lemma}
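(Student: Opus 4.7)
The starting point is the master formula for the log relative wealth from Proposition~\ref{prop: log of relative wealth}, which gives the decomposition
\begin{equation*}
\log V^\pi_T - \log V^\phi_T = \int_0^T H_s \,\d\mu_s - \frac{1}{2} \sum_{i,j=1}^d \int_0^T (\pi^i_s\pi^j_s - \phi^i_s\phi^j_s) \,\tau^\mu_{ij}(\d s),
\end{equation*}
where $H := \pi/\mu - \phi/\mu$, viewed as a $\mu$-controlled path with Gubinelli derivative $H' := (\pi/\mu)' - (\phi/\mu)'$ and remainder $R^H := R^{\pi/\mu} - R^{\phi/\mu}$. By the very definitions of the seminorm $p_N^{\mu,q'}$ and of $d_\beta$, the quantities $|H_0|$, $|H'_0|$, $\|H'\|_{q',[0,T]}$ and $\|R^H\|_{r',[0,T]}$ are all bounded by $\|H,H'\|_{\mathcal{V}^{q'}_\mu,[0,N]} \leq \beta_N \gamma_N \, d_\beta((\pi,\pi'),(\phi,\phi'))$, which is the only source of smallness we will exploit. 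I will estimate the two integral terms separately and then combine.

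\textbf{Step 1 (rough integral).} Regard $\mu$ as trivially a $\mu$-controlled path, with Gubinelli derivative the identity $I$ and vanishing remainder. Applying the estimate \eqref{eq:est int of controlled paths} of Lemma~\ref{lem: int of controlled paths exists} on $[0,T]$, together with the identity $\int_0^T H_s\,\d\mu_s = H_0\mu_{0,T} + H'_0 \mathbb{A}^\mu_{0,T} + (\text{sewing error})$, yields
\begin{equation*}
\Bigl|\int_0^T H_s \,\d\mu_s\Bigr| \leq |H_0|\,\|\mu\|_{p,[0,T]} + |H'_0|\,\|\mathbb{A}^\mu\|_{\frac{p}{2},[0,T]} + C\bigl(\|H'\|_\infty \|\mu\|_{p,[0,T]}^{1+\frac{p}{q'}} + \|R^H\|_{r',[0,T]}\|\mu\|_{p,[0,T]} + \|H'\|_{q',[0,T]} \|\mathbb{A}^\mu\|_{\frac{p}{2},[0,T]}\bigr).
\end{equation*}
The factor $\|H'\|_\infty$ is controlled by $|H'_0| + \|H'\|_{q',[0,T]} \lesssim \beta_N \gamma_N \, d_\beta$, while the leftover non-integer power $\|\mu\|_{p,[0,T]}^{p/q'}$ is bounded by a constant multiple of $\gamma_N$ via the definition of $\gamma_N$ and the fact that $c_\mu$ controls the $p$-variation of $\mu$. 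Each remaining $\|\mu\|_{p,[0,T]}$ or $\|\mathbb{A}^\mu\|_{p/2,[0,T]}$ is bounded by $\xi_T$. Collecting these, the rough integral term is bounded by $C\beta_N \gamma_N^2 \xi_T \, d_\beta((\pi,\pi'),(\phi,\phi'))$.

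\textbf{Step 2 (finite variation term).} Factor $\pi^i_s\pi^j_s - \phi^i_s\phi^j_s = (\pi^i-\phi^i)_s\pi^j_s + \phi^i_s(\pi^j-\phi^j)_s$, and observe that coordinatewise $\pi - \phi = \mu \cdot H$, so $\|\pi-\phi\|_{\infty,[0,T]} \leq \|H\|_{\infty,[0,T]} \lesssim \beta_N\gamma_N\,d_\beta$, since $\mu^i \in [0,1]$ and
\begin{equation*}
\|H\|_{\infty,[0,T]} \leq |H_0| + \|H'\|_{\infty}\|\mu\|_{p,[0,T]} + \|R^H\|_{r',[0,T]}.
\end{equation*}
The $L^\infty$-norms of $\pi$ and $\phi$ are bounded in terms of $\gamma_N$ using the same decomposition $\pi = \mu\,(\pi/\mu)$ together with the bound $|\pi_0/\mu_0| \leq M$ and the control of $\|(\pi/\mu)'\|_\infty$ and $\|R^{\pi/\mu}\|_{r}$ via $c_\mu$. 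Finally, the total variation of $\tau^\mu_{ij}$ on $[0,T]$ is dominated by $\|a\|_{TV,[0,T]}$ times a bounded function of $\mu$, and using Cauchy--Schwarz together with $a^{ij}(\d s) = \frac{1}{S^i_sS^j_s}\d[\bS]^{ij}_s$ and \eqref{eq:quad var of mu}, this is in turn dominated by $\sum_i [\mu]^{ii}_T \leq \xi_T$. Putting these estimates together yields the bound $C\beta_N\gamma_N^2\xi_T\,d_\beta(\ldots)$ for the finite variation term as well.

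\textbf{Main obstacle.} The technical difficulty is purely bookkeeping: the sewing estimate in Lemma~\ref{lem: int of controlled paths exists} produces the slightly awkward exponent $\|\mu\|_p^{1+p/q'}$, and since the final bound is only linear in $\xi_T$ and quadratic in $\gamma_N$, one cannot afford to throw together $\|\mu\|_p$ and $\|\mathbb{A}^\mu\|_{p/2}$ indiscriminately. One has to peel off the non-integer power of $\|\mu\|_p$ as a factor of $\gamma_N$ (using that $c_\mu$ bounds the $p$-variation of $\mu$ and that $\gamma_N$ incorporates powers of $c_\mu(0,N)$), and track which of the terms $|H_0|$, $|H'_0|$, $\|H'\|_{q'}$, $\|R^H\|_{r'}$ actually provides the $d_\beta$ factor. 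Once this accounting is done carefully, all remaining factors fit into either $\beta_N\gamma_N^2$ or $\xi_T$, producing the claimed estimate; continuity of $(\pi,\pi')\mapsto V^\pi_T$ in $d_\beta$ is an immediate consequence.
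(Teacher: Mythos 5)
Your proposal follows essentially the same route as the paper's proof: decompose $\log V^\pi_T - \log V^\phi_T$ via Proposition~\ref{prop: log of relative wealth}, bound the rough integral term using the sewing estimate and convert to $p^{\mu,q'}_N$ and then $d_\beta$, bound the finite-variation term via $L^\infty$ bounds of $\pi/\mu$ from the controlled path structure combined with Cauchy--Schwarz for $[\mu]^{ij}$, and collect factors into $\beta_N\gamma_N^2\xi_T$. The one substantive deviation is in Step~1: you invoke the estimate~\eqref{eq:est int of controlled paths} of Lemma~\ref{lem: int of controlled paths exists}, whereas the paper applies the cleaner rough-integral estimate~\eqref{eq:rough integral estimate} directly to the rough path $(\mu,A^\mu)$. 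Your choice introduces the extraneous term $\|H'\|_\infty(\|\mu'\|_{q'}^{q'}+\|\mu\|_p^p)^{1/r'}\|\mu\|_p$, which you then work to absorb. Two remarks on this. First, this term actually vanishes: taking $\mu$ as the trivial $\mu$-controlled path means $G'=I$ is constant, so $G'_{s,u}=0$ in~\eqref{eq:delta Xi int of controlled paths} and $\|G'\|_{q'}=0$; i.e., the ``main obstacle'' you identify disappears if you trace through the sewing lemma rather than quoting the worst-case bound. Second, if you do keep it, the exponent is off: $(\|\mu\|_p^p)^{1/r'}\|\mu\|_p = \|\mu\|_p^{1+p/r'}$ and $p/r' = 1 + p/q'$, so the power should be $2+p/q'$, not $1+p/q'$; your absorption argument (peel $\gamma_N$ from one non-integer power and $\xi_T$ from the remaining $\|\mu\|_p$) still closes, but as written the accounting is inconsistent with Lemma~\ref{lem: int of controlled paths exists}. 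Neither point is a genuine gap: the strategy is sound and matches the paper; the paper's choice of estimate is simply the one that makes the bookkeeping trivial.
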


\begin{proof}
  By Proposition~\ref{prop: log of relative wealth} and the relation in \eqref{eq:quad var of mu}, we have that, for any $(\pi,\pi') \in \mathcal{A}^{M,q}(c_\mu)$,
  \begin{equation*}
    \log V^\pi_T = \int_0^T \frac{\pi_s}{\mu_s} \dd \mu_s - \frac{1}{2}\sum_{i,j=1}^d \int_0^T \frac{\pi^i_s\pi^j_s}{\mu^i_s\mu^j_s} \dd [\mu]^{ij}_s,
  \end{equation*}
  which implies that, for $(\pi,\pi'), (\phi,\phi') \in \mathcal{A}^{M,q}(c_\mu)$,
  \begin{equation*}
    |\log V^\pi_T - \log V^\phi_T| \leq \bigg|\int_0^T \frac{\pi_s - \phi_s}{\mu_s} \dd \mu_s\bigg| + \frac{1}{2} \bigg|\sum_{i,j=1}^d \int_0^T \frac{(\pi^i_s - \phi^i_s)(\pi^j_s + \phi^j_s)}{\mu^i_s\mu^j_s} \dd [\mu]^{ij}_s\bigg|.
  \end{equation*}

  We aim to bound the two terms on the right-hand side. Let $A^\mu$ be the canonical rough path lift of $\mu$ (as defined in Section~\ref{sec Rough paths associated to controlled paths}), namely $A^\mu_{s,t} = \int_s^t \mu_{s,u} \otimes \d \mu_u$. Writing $N = \lceil T \rceil$, by the estimate for rough integrals in \eqref{eq:rough integral estimate}, we obtain
  \begin{align*}
    \bigg|\int_0^T \frac{\pi_s - \phi_s}{\mu_s} \dd \mu_s\bigg| &\lesssim \|R^{\frac{\pi - \phi}{\mu}}\|_{r',[0,T]} \|\mu\|_{p,[0,T]} + \Big\|\Big(\frac{\pi - \phi}{\mu}\Big)^\prime\Big\|_{q',[0,T]} \|A^\mu\|_{\frac{p}{2},[0,T]}\\
    &\qquad + \Big|\frac{\pi_0 - \phi_0}{\mu_0}\Big| \|\mu\|_{p,[0,T]} + \Big|\Big(\frac{\pi - \phi}{\mu}\Big)^\prime_0\Big| \|A^\mu\|_{\frac{p}{2},[0,T]}\\
    &\lesssim  p^{\mu,q^\prime}_N((\pi,\pi^\prime) - (\phi,\phi^\prime))(\|\mu\|_{p,[0,T]} + \|A^\mu\|_{\frac{p}{2},[0,T]})\\
    &\leq \beta_N \gamma_N d_{\beta}((\pi,\pi'),(\phi,\phi')) (\|\mu\|_{p,[0,T]} + \|A^\mu\|_{\frac{p}{2},[0,T]}).
  \end{align*}

  For the second term, we note that
  \begin{equation}\label{eq:ugly estimate}
    \bigg|\int_0^T \frac{(\pi^i_s - \phi^i_s)(\pi^j_s + \phi^j_s)}{\mu^i_s \mu^j_s} \dd [\mu]^{ij}_s\bigg| \lesssim \Big\|\frac{\pi - \phi}{\mu}\Big\|_{\infty,[0,T]} \Big\|\frac{\pi + \phi}{\mu}\Big\|_{\infty,[0,T]} \sum_{i=1}^d [\mu]^{ii}_T.
  \end{equation}
  It follows from the relation $\frac{\pi_t}{\mu_t} = \frac{\pi_0}{\mu_0} + (\frac{\pi}{\mu})^\prime_0 \mu_{0,t} + R^{\frac{\pi}{\mu}}_{0,t}$, and the fact that $\mu$ takes values in the bounded set $\Delta^d_+$, that
  \begin{equation*}
    \Big\|\frac{\pi}{\mu}\Big\|_{\infty,[0,T]} \lesssim M + c_{\mu}(0,T)^{\frac{1}{r}} \leq \gamma_N.
  \end{equation*}
  It follows similarly from $\frac{\pi_t - \phi_t}{\mu_t} = \frac{\pi_0 - \phi_0}{\mu_0} + (\frac{\pi - \phi}{\mu})^\prime_0 \mu_{0,t} + R^{\frac{\pi - \phi}{\mu}}_{0,t}$, that
  \begin{equation*}
    \Big\|\frac{\pi - \phi}{\mu}\Big\|_{\infty,[0,T]} \lesssim p^{\mu,q^\prime}_N((\pi,\pi') - (\phi,\phi')) \leq \beta_N \gamma_N d_\beta((\pi,\pi'),(\phi,\phi')).
  \end{equation*}
  Substituting back into \eqref{eq:ugly estimate}, we obtain
  \begin{equation*}
    \bigg|\int_0^T \frac{(\pi^i_s - \phi^i_s)(\pi^j_s + \phi^j_s)}{\mu^i_s \mu^j_s} \dd [\mu]^{ij}_s\bigg| \lesssim \beta_N \gamma_N^2 d_\beta((\pi,\pi'),(\phi,\phi')) \sum_{i=1}^d [\mu]^{ii}_T.
  \end{equation*}
  Combining the inequalities above, we deduce the desired estimate.
\end{proof}

In the following, we will sometimes write simply $\mathcal{A}^{M,q} := \mathcal{A}^{M,q}(c_\mu)$ for brevity.

\medskip

For $(\pi,\pi') \in \mathcal{A}^{M,q}$, we have by definition that $\pi$ is a $\mu$-controlled path. We also have that the relative wealth $V^\pi$ is also a $\mu$-controlled rough path---as can be seen for instance from Proposition~\ref{prop: log of relative wealth}---and hence the product $\pi V^\pi$ is also a controlled path. Let $\nu$ be a fixed probability measure on $(\mathcal{A}^{M,q},d_{\beta})$. Observe that for every $T > 0$ the space $\mathcal{V}_\mu^{q}([0,T];\R^d)$ of controlled paths is a Banach space, and that, as we will see during the proof of Lemma~\ref{lemma wealth of universal portfolio} below, $V^\pi$ is the unique solution to the rough differential equation \eqref{eq:RDE for relative wealth of universal portfolio}, which implies that the mapping $\pi \mapsto V^\pi|_{[0,T]} \in \mathcal{V}_\mu^{q}([0,T];\R^d)$ is continuous by the continuity of the It\^o--Lyons map (see e.g.~\cite[Theorem~1]{Lejay2014a}). Hence, for every $T >0$ we can define the Bochner integral $\int_{\mathcal{A}^{M,q}} (\pi V^\pi)|_{[0,T]} \dd \nu(\pi)$, which is thus itself another controlled path defined on $[0,T]$. The $\mu$-controlled path
\begin{equation}\label{eq: universal portfolio}
  \pi^\nu_t := \frac{\int_{\mathcal{A}^{M,q}} \pi_t V^\pi_t \dd \nu(\pi)}{\int_{\mathcal{A}^{M,q}} V^\pi_t \dd \nu(\pi)}, \qquad t \in [0,\infty),
\end{equation}
is then well-defined, and defines indeed a portfolio in $\mathcal{V}_{\mu}^q$, called the \textit{universal portfolio} associated to the set $\mathcal{A}^{M,q}$ of admissible portfolios.

\begin{lemma}\label{lemma wealth of universal portfolio}
  Let $\pi^\nu$ be the universal portfolio as defined in \eqref{eq: universal portfolio}. Then, for all $T > 0$,
  \begin{equation*}
    V^{\pi^\nu}_T = \int_{\mathcal{A}^{M,q}} V^\pi_T \dd \nu(\pi).
  \end{equation*}
\end{lemma}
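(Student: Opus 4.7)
The plan is to show that both $V^{\pi^\nu}$ and the functional $U_t := \int_{\mathcal{A}^{M,q}} V^\pi_t \dd \nu(\pi)$ are solutions of the same linear rough differential equation driven by $\mu$, and then to invoke uniqueness.

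First, I would check that for every admissible $(\pi,\pi') \in \mathcal{A}^{M,q}$, the relative wealth satisfies the linear RDE
\begin{equation*}
V^\pi_t = 1 + \int_0^t \frac{V^\pi_s \pi_s}{\mu_s} \dd \mu_s \qquad (\star).
\end{equation*}
This is the exact analogue of Lemma~\ref{lem: rough exponential gives self financing integrands}, but with $(S,\mathbf{S})$ replaced by $(\mu,\text{its canonical lift})$: set $Z_t := \int_0^t \frac{\pi_s}{\mu_s}\dd \mu_s$ and use Lemma~\ref{lem: Ito isometry for rough paths} together with the identity $[\mu]^{ij}_t = \int_0^t \mu^i_s \mu^j_s\,\tau^\mu_{ij}(\d s)$ from~\eqref{eq:quad var of mu} to compute $[\mathbf{Z}]_t = \sum_{i,j}\int_0^t \pi^i_s\pi^j_s\,\tau^\mu_{ij}(\d s)$; Proposition~\ref{prop: log of relative wealth} then identifies $V^\pi_t = \exp(Z_t - \tfrac12 [\mathbf{Z}]_t)$, and Lemma~\ref{lem: dynamics of rough exponential} combined with the associativity of rough integration (Proposition~\ref{prop: associativity of rough integration}) delivers $(\star)$.

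Next, as noted immediately before the statement of the lemma, the Itô--Lyons map for $(\star)$ makes $\pi \mapsto V^\pi|_{[0,T]}$ a continuous map from the compact metric space $(\mathcal{A}^{M,q},d_\beta)$ (Lemma~\ref{lem: compactness of configuration space}) into the Banach space $\mathcal{V}_\mu^q([0,T];\R)$. Hence the Bochner integral $U|_{[0,T]} := \int_{\mathcal{A}^{M,q}} V^\pi|_{[0,T]}\dd \nu(\pi)$ defines a $\mu$-controlled path, and similarly the product $V^\pi \pi/\mu$ depends continuously on $\pi$ in $\mathcal{V}_\mu^q$. Integrating $(\star)$ against $\nu$ and exchanging the Bochner and rough integrals yields
\begin{equation*}
U_t = 1 + \int_{\mathcal{A}^{M,q}}\!\!\bigg(\int_0^t \frac{V^\pi_s \pi_s}{\mu_s}\dd \mu_s\bigg) \dd\nu(\pi) = 1 + \int_0^t \frac{\int_{\mathcal{A}^{M,q}} V^\pi_s \pi_s\dd \nu(\pi)}{\mu_s} \dd \mu_s = 1 + \int_0^t \frac{U_s \pi^\nu_s}{\mu_s} \dd\mu_s,
\end{equation*}
where the last equality is just the defining relation \eqref{eq: universal portfolio} for $\pi^\nu$. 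Thus $U$ and $V^{\pi^\nu}$ both solve the same linear rough differential equation with initial value $1$; by the uniqueness part of the Itô--Lyons theorem, $U_t = V^{\pi^\nu}_t$ for all $t \in [0,T]$, and the claim follows since $T$ is arbitrary.

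The main obstacle I anticipate is the Fubini-type interchange between the $\nu$-integral and the rough integral. In the classical Riemann--Stieltjes or Lebesgue setting this is routine, but here the rough integral is not a pointwise limit of linear functionals of the integrand alone; rigorous justification requires continuity of the integration operator $(F,F')\mapsto \int_0^t (F,F')\dd \mu$ on $\mathcal{V}_\mu^q$, which is available via the rough integral estimate \eqref{eq:est int of controlled paths} and the continuity statement in Lemma~\ref{lem: continuity of relative wealth}, together with the standard fact that Bochner integration commutes with bounded linear maps into a Banach space. Once this technical point is settled, the rest of the argument reduces to the uniqueness of linear RDE solutions.
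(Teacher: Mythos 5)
Your proposal is correct and follows essentially the same route as the paper: identify $V^\pi$ as the unique solution of the linear RDE $Y_t = 1 + \int_0^t Y_s \frac{\pi_s}{\mu_s}\dd\mu_s$ via Proposition~\ref{prop: log of relative wealth}, Lemma~\ref{lem: dynamics of rough exponential} and Proposition~\ref{prop: associativity of rough integration}, then interchange the $\nu$-integral with the rough integral, apply the algebraic definition of $\pi^\nu$, and conclude by uniqueness. The one minor difference is in how you justify the interchange: the paper invokes its dedicated Fubini theorem for rough integration (Theorem~\ref{thm: rough Fubini}), whereas you appeal to the boundedness and linearity of the rough integration operator on $\mathcal{V}^q_\mu$ plus the commutation of Bochner integration with bounded linear maps; this is a legitimate and conceptually clean shortcut (note, though, that the continuity ingredient you need is the bound \eqref{eq:est int of controlled paths} on the integration operator itself, not Lemma~\ref{lem: continuity of relative wealth}, which concerns $\pi \mapsto V^\pi_T$).
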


\begin{proof}
  By Proposition~\ref{prop: log of relative wealth} and the relation in \eqref{eq:quad var of mu}, we have, for any portfolio $\pi$,
  \begin{equation*}
    V^\pi_t = \exp \bigg(\int_0^t \frac{\pi_s}{\mu_s} \dd \mu_s - \frac{1}{2} \sum_{i,j=1}^d \int_0^t \frac{\pi^{i}_s \pi^{j}_s}{\mu^i_s \mu^j_s} \dd [\mu]^{ij}_s\bigg).
  \end{equation*}
  Setting $Z := \int_0^\cdot \frac{\pi_s}{\mu_s} \dd \mu_s$, by Lemma~\ref{lem: Ito isometry for rough paths}, we can rewrite the relation above as $V^\pi = \exp (Z - \frac{1}{2} [\bZ])$. Thus, by Lemma~\ref{lem: dynamics of rough exponential}, Lemma~\ref{lem: consistency of rough integrals} and Proposition~\ref{prop: associativity of rough integration}, we deduce that $V^\pi$ is the unique solution $Y$ to the linear rough differential equation
  \begin{equation}\label{eq:RDE for relative wealth of universal portfolio}
    Y_t = 1 + \int_0^t Y_s \frac{\pi_s}{\mu_s} \dd \mu_s, \qquad t \geq 0.
  \end{equation}
  It is therefore sufficient to show that the path $t \mapsto \int_{\mathcal{A}^{M,q}} V^\pi_t \dd \nu(\pi)$ also satisfies the RDE~\eqref{eq:RDE for relative wealth of universal portfolio} with $\pi$ replaced by $\pi^\nu$. By the definition of the universal portfolio in \eqref{eq: universal portfolio}, we have
  \begin{equation}\label{eq:universal portfolio simple relation}
    \int_{\mathcal{A}^{M,q}} V^\pi_s \dd \nu(\pi) \frac{\pi^\nu_s}{\mu_s} = \int_{\mathcal{A}^{M,q}} \frac{\pi_s}{\mu_s} V^\pi_s \dd \nu(\pi).
  \end{equation}
  Recalling that $V^\pi$ satisfies \eqref{eq:RDE for relative wealth of universal portfolio}, we know that
  \begin{equation*}
    V^\pi_t = 1 + \int_0^t \frac{\pi_s}{\mu_s} V^\pi_s \dd \mu_s.
  \end{equation*}
By the Fubini theorem for rough integration (Theorem~\ref{thm: rough Fubini}) we then have that
\begin{align*}
\int_{\mathcal{A}^{M,q}} V^\pi_t \dd \nu(\pi) &= 1 + \int_0^t \int_{\mathcal{A}^{M,q}} \frac{\pi_s}{\mu_s} V^\pi_s \dd \nu(\pi) \dd \mu_s = 1 + \int_0^t \int_{\mathcal{A}^{M,q}} V^\pi_s \dd \nu(\pi) \frac{\pi^\nu_s}{\mu_s} \dd \mu_s,
\end{align*}
where we used \eqref{eq:universal portfolio simple relation} to obtain the last equality. Hence, both $V^{\pi^\nu}$ and $\int_{\mathcal{A}^{M,q}} V^\pi \dd \nu(\pi)$ are the unique solution of the same RDE, and thus coincide.
\end{proof}

With these preparations in place, we now aim to compare the growth rates of the universal portfolio \eqref{eq: universal portfolio} and the best retrospectively chosen portfolio. For this purpose, we fix an $M > 0$, and assume that there exists a compact metric space $(\mathcal{K}, d_{\mathcal{K}})$ together with a mapping $\iota : (\mathcal{K}, d_{\mathcal{K}}) \to (\mathcal{A}^{M,q}, d_\beta)$ such that $\iota$ is continuous and injective (and thus a homeomorphism onto its image), and that for every $T > 0$ and $x, y \in \mathcal{K}$, we have that
\begin{equation}\label{eq: lipschitz type estimation for embedding}
  |\log V^{\iota(x)}_T - \log V^{\iota(y)}_T| \leq C \lambda(T) d_{\mathcal{K}}(x,y),
\end{equation}
where $\lambda$ is a positive function of $T$, and $C$ is a universal constant independent of $T$. Here we list some examples of $(\mathcal{K}, d_{\mathcal{K}})$, $\iota$ and $\lambda$:
\begin{enumerate}
  \item $\mathcal{K} = C^{p+\alpha,K}(\overline{\Delta}^d_+; \R^d) = \{G \in C^{p+\alpha}(\overline{\Delta}^d_+;\R^d) : \|G\|_{C^{p+\alpha}} \leq K, \hspace{1pt} G \geq \frac{1}{K}\}$, $d_{\mathcal{K}}(G,\tilde{G}) = \|G - \tilde{G}\|_{C^{2}}$, $\iota(G) = \pi^G$, where $\alpha > 0$ and $\pi^G$ is a classical functionally generated portfolio of the form \eqref{eq: canonical function generated portfolio}. In this case we can take $\lambda(T) = 1 + \max_{i=1,\ldots,d} [\mu]^{ii}_T$; see the proof of \cite[Lemma~4.4]{Cuchiero2019}.
  \item $\mathcal{K} = C^{2+\alpha,K}(\overline{\Delta}^d_+; \R^d) = \{F \in C^{2+\alpha}(\overline{\Delta}^d_+;\R^d): \|F\|_{C^{2+\alpha}} \leq K\}$, $d_{\mathcal{K}}(F,\tilde{F}) = \|F - \tilde{F}\|_{C^{2}}$, $\iota(F) = \pi^F$, where $\alpha \in (0,1]$ and $\pi^F$ is a functionally controlled portfolio defined as in \eqref{eq:portfoliogen}. In this case one may take $\lambda(T) = (1 + \|\mu\|_{p,[0,T]}^2) \xi_T$, where $\xi_T$ is defined in \eqref{eq:defn xi_T}; see Lemma \ref{lemma continuity log wealth func cont port} below.
  \item $\mathcal{K} = \mathcal{A}^{M,q}$, $d_{\mathcal{K}} = d_\beta$, $\iota = \text{Id}_{\mathcal{A}^{M,q}}$.  In view of \eqref{eq: Lipschitz estimate of log relative wealth} we have $\lambda(T) = \beta_{\lceil T \rceil}\gamma_{\lceil T \rceil}^2\xi_T$.
\end{enumerate}
Given such a compact space $(\mathcal{K}, d_{\mathcal{K}})$ equipped with an embedding $\iota$ as above, we define
\begin{equation*}
  V^{\ast,\mathcal{K},\iota}_T = \sup_{x \in \mathcal{K}}V^{\iota(x)}_T = \sup_{\pi \in \iota(\mathcal{K})} V^\pi_T.
\end{equation*}
By the compactness of $\mathcal{K}$ and the continuity provided by the estimate in \eqref{eq: lipschitz type estimation for embedding}, we have that, for each $T > 0$, there exists a portfolio $\pi^{\ast,T} \in \iota(\mathcal{K})$, which can be expressed as $\pi^{\ast,T} = \iota(x^\ast)$ for some $x^\ast \in \mathcal{K}$, known as the \emph{best retrospectively chosen portfolio} associated with $\mathcal{K}$ and $\iota$, such that
\begin{equation}\label{eq:best retro portfolio}
  V^{\ast,\mathcal{K},\iota}_T = V^{\pi^{\ast,T}}.
\end{equation}

The following theorem provides an analogue of \cite[Theorem~4.11]{Cuchiero2019} in our rough path setting.

\begin{theorem}\label{thm: Cover's theorem}
  Let $(\mathcal{K},d_{\mathcal{K}})$ be a compact metric space equipped with a continuous embedding $\iota\colon (\mathcal{K}, d_{\mathcal{K}}) \to (\mathcal{A}^{M,q}, d_\beta)$ which satisfies the bound in \eqref{eq: lipschitz type estimation for embedding} for some positive function $\lambda$. Let $m$ be a probability measure on $\mathcal{K}$ with full support, and let $\nu = \iota_{\ast}(m)$ denote the pushforward measure on $\mathcal{A}^{M,q}$. If $\lim_{T \to \infty} \lambda(T) = \infty$, then
  \begin{equation*}
    \lim_{T \to \infty} \frac{1}{\lambda(T)} \Big(\log V^{\ast,\mathcal{K},\iota}_T - \log V^{\pi^\nu}_T\Big) = 0.
  \end{equation*}
\end{theorem}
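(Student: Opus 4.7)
My plan is to adapt Cover's classical averaging argument to the rough path setting, with the two essential inputs being the Fubini-type identity $V^{\pi^\nu}_T = \int_{\mathcal{A}^{M,q}} V^\pi_T \dd \nu(\pi)$ from Lemma~\ref{lemma wealth of universal portfolio} and the Lipschitz-type bound \eqref{eq: lipschitz type estimation for embedding}. First I would rewrite, using $\nu = \iota_*(m)$,
\begin{equation*}
V^{\pi^\nu}_T = \int_{\mathcal{K}} V^{\iota(x)}_T \dd m(x),
\end{equation*}
from which $V^{\pi^\nu}_T \leq V^{*,\mathcal{K},\iota}_T$ follows as an immediate averaging inequality, so the quantity $\log V^{*,\mathcal{K},\iota}_T - \log V^{\pi^\nu}_T$ is automatically non-negative and we only need an upper bound on it.

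Next I would localize around the best retrospective portfolio. Fix $\delta > 0$, pick $x^{*,T} \in \mathcal{K}$ attaining $V^{*,\mathcal{K},\iota}_T$ (which exists by compactness of $\mathcal{K}$ and continuity of $x \mapsto V^{\iota(x)}_T$, itself a consequence of \eqref{eq: lipschitz type estimation for embedding}), and set $B_\delta := \{x \in \mathcal{K} : d_{\mathcal{K}}(x,x^{*,T}) < \delta\}$. Estimate \eqref{eq: lipschitz type estimation for embedding} gives $\log V^{\iota(x)}_T \geq \log V^{*,\mathcal{K},\iota}_T - C\lambda(T)\delta$ for every $x \in B_\delta$, so restricting the integral to $B_\delta$ yields
\begin{equation*}
V^{\pi^\nu}_T \;\geq\; m(B_\delta)\, V^{*,\mathcal{K},\iota}_T\, e^{-C\lambda(T)\delta},
\end{equation*}
and taking logs and dividing by $\lambda(T)$ gives
\begin{equation*}
0 \;\leq\; \frac{\log V^{*,\mathcal{K},\iota}_T - \log V^{\pi^\nu}_T}{\lambda(T)} \;\leq\; C\delta - \frac{\log m(B_\delta)}{\lambda(T)}.
\end{equation*}

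The only real obstacle is to bound $m(B_\delta)$ from below \emph{uniformly in $T$}, since the center $x^{*,T}$ depends on $T$. This is where I would use compactness of $\mathcal{K}$ together with the full-support hypothesis on $m$: cover $\mathcal{K}$ by finitely many balls $B(y_i,\delta/2)$, $i = 1, \dots, N_\delta$, and note that for any $x \in \mathcal{K}$ there is some $i$ with $d_{\mathcal{K}}(x,y_i) < \delta/2$, whence $B(x,\delta) \supset B(y_i,\delta/2)$ and hence
\begin{equation*}
\inf_{x \in \mathcal{K}} m(B(x,\delta)) \;\geq\; \min_{1 \leq i \leq N_\delta} m(B(y_i,\delta/2)) \;=:\; c_\delta \;>\; 0.
\end{equation*}
Plugging $m(B_\delta) \geq c_\delta$ into the previous display and sending $T \to \infty$ (using $\lambda(T) \to \infty$, which makes the $\log c_\delta/\lambda(T)$ term disappear) yields $\limsup_{T \to \infty} \lambda(T)^{-1}(\log V^{*,\mathcal{K},\iota}_T - \log V^{\pi^\nu}_T) \leq C\delta$; since $\delta > 0$ was arbitrary, taking $\delta \downarrow 0$ concludes. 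The measurability needed to invoke Lemma~\ref{lemma wealth of universal portfolio} for the pushforward $\nu$, and the mild check that $x^{*,T}$ is Borel-measurable in $T$ (not actually required, as we only need \emph{some} maximizer at each fixed $T$), are the only auxiliary points I would verify in passing.
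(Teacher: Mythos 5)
Your proof is correct and takes essentially the same route as the paper's: fix a small ball around the best retrospectively chosen portfolio, use the Lipschitz bound \eqref{eq: lipschitz type estimation for embedding} to bound the wealth of nearby portfolios from below, and exploit compactness together with full support to obtain a lower bound on the $m$-measure of the ball that is uniform over the ($T$-dependent) centre. The only cosmetic difference is that the paper runs the argument on $(V^{\pi^\nu}_T/V^{\pi^{*,T}}_T)^{1/\lambda(T)}$ and invokes Jensen's inequality to move the power inside the integral before lower-bounding the integrand, whereas you simply restrict the integral and bound the integrand by its infimum directly (and make the uniform measure lower bound explicit through a finite-cover argument); the two are equivalent.
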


In particular, if $\mathcal{K} = C^{p+\alpha,K}(\overline{\Delta}^d_+; \R^d) = \{G \in C^{p+\alpha}(\overline{\Delta}^d_+;\R^d) : \|G\|_{C^{p+\alpha}} \leq K, \hspace{1pt} G \ge \frac{1}{K}\}$, $d_{\mathcal{K}}(G,\tilde{G}) = \|G - \tilde{G}\|_{C^{2}}$, $\iota(G) = \pi^G$, where $\pi^G$ is a classical functionally generated portfolio of the form \eqref{eq: canonical function generated portfolio}, and $\lambda(T) = 1 + \max_{i=1,\ldots,d} [\mu]^{ii}_T$, then one also infers the version of Cover's theorem obtained in \cite[Theorem 4.11]{Cuchiero2019}.

\begin{proof}[Proof of Theorem~\ref{thm: Cover's theorem}]
  As the inequality ``$\geq$'' is trivial, we need only show the reverse inequality. As $\mathcal{K}$ is compact and $m$ has full support, we have that, for any $\eta \in (0,1)$, there exists a $\delta > 0$ such that every $\eta$-ball around a point $x \in \mathcal{K}$ with respect to $d_{\mathcal{K}}$ has $m$-measure bigger than $\delta$.

  Let $T > 0$ be such that $\lambda(T) \geq 1$, and let $\pi^{\ast,T} = \iota(x^*)$ be the best retrospectively chosen portfolio, as in \eqref{eq:best retro portfolio}. For any portfolio $\pi  = \iota(x) \in \iota(\mathcal{K}) \subseteq \mathcal{A}^{M,q}(c_{\mu})$ such that $d_{\mathcal{K}}(x, x^*) \leq \eta$, the estimate in \eqref{eq: lipschitz type estimation for embedding} implies that
  \begin{equation*}
    \frac{1}{\lambda(T)} \Big(\log V^\pi_T - \log V^{\pi^{\ast,T}}_T\Big) \geq -C d_{\mathcal{K}}(x, x^*) \geq -C \eta,
  \end{equation*}
  for some constant~$C$. For any $\varepsilon > 0$, we can therefore choose $\eta$ small enough such that
  \begin{equation}\label{eq:inequality diff log rel wealth epsilon}
    \frac{1}{\lambda(T)} \Big(\log V^\pi_T - \log V^{\pi^{\ast,T}}_T\Big) \geq -\varepsilon.
  \end{equation}
  Let $B_{\eta}(x^*)$ denote the $\eta$-ball in $\mathcal{K}$ around the point $x^*$ with respect to the metric $d_{\mathcal{K}}$, which has $m$-measure $|B_{\eta}(x^*)| \geq \delta$. By Lemma~\ref{lemma wealth of universal portfolio} and Jensen's inequality, we have that
  \begin{equation*}
    (V^{\pi^\nu}_T)^{\frac{1}{\lambda(T)}} \geq \bigg(\int_{B_{\eta}(x^*)} V^{\iota(x)}_T \dd m(x)\bigg)^{\frac{1}{\lambda(T)}} \geq |B_{\eta}(x^*)|^{\frac{1}{\lambda(T)} - 1} \int_{B_{\eta}(x^*)} (V^{\iota(x)}_T)^{\frac{1}{\lambda(T)}} \dd m(x).
  \end{equation*}
  Then, using \eqref{eq:inequality diff log rel wealth epsilon}, we have
  \begin{equation*}
    \bigg(\frac{V^{\pi^\nu}_T}{V^{\pi^{\ast,T}}_T} \bigg)^{\frac{1}{\lambda(T)}} \geq |B_{\eta}(x^*)|^{\frac{1}{\lambda(T)} - 1} \int_{B_{\eta}(x^*)} \bigg(\frac{V^{\iota(x)}_T}{V^{\iota(x^*)}_T} \bigg)^{\frac{1}{\lambda(T)}} \dd m(x) \geq |B_{\eta}(x^*)|^{\frac{1}{\lambda(T)}} e^{-\epsilon} \geq \delta^{\frac{1}{\lambda(T)}} e^{-\epsilon}.
  \end{equation*}
  Taking $\epsilon > 0$ arbitrarily small (which determines $\eta$ and hence also $\delta$) and then $T > 0$ sufficiently large, we deduce the desired inequality.
\end{proof}

\subsection{Universal portfolios based on functionally controlled portfolios}\label{subsection universal portfolios functionally controlled}

The most frequently considered classes of portfolios are those which are generated by functions acting on the underlying price trajectories, such as the functionally generated portfolios in Lemma~\ref{lem: functionally generated portfolios are in base set}. In this section we shall investigate the growth rate of universal portfolios based on the more general class of functionally controlled portfolios, as introduced in Example~\ref{ex: functionally controlled portfolios}. More precisely, we fix constants $\alpha \in (0,1]$ and $K > 0$, and consider the sets
\begin{equation*}
  C^{2+\alpha,K}(\overline{\Delta}^d_+;\R^d) := \{F \in C^{2+\alpha}(\overline{\Delta}^d_+;\R^d): \|F\|_{C^{2+\alpha}} \leq K\}
\end{equation*}
and
\begin{equation*}
  \mathcal{F}^{2+\alpha,K} := \big\{(\pi^F, \pi^{F,\prime}): F \in C^{2+\alpha,K}(\overline  \Delta^d_+;\R^d)\big\},
\end{equation*}
where the portfolio $\pi^F$ is of the form in \eqref{eq:portfoliogen}. Here we recall that $C^{2+\alpha}$ denotes the space of twice continuously differentiable functions whose second derivative is $\alpha$-H{\"o}lder continuous.

\begin{lemma}\label{lem: continuity embedding of function spaces}
  For any $T > 0$ and any $F, G \in C^{2+\alpha,K}(\overline{\Delta}^d_+;\R^d)$, we have that
  \begin{equation}\label{eq: local estimates for generalized functionally generated portfolios}
    p^{\mu,p}_T((\pi^F,\pi^{F,\prime}) - (\pi^G,\pi^{G,\prime})) \leq C \|F - G\|_{C^2} (1 + \|\mu\|_{p,[0,T]}^2),
  \end{equation}
  where the constant $C$ depends only on $p, d$ and $K$. Considering the map $\Phi \colon C^{2+\alpha,K}(\overline{\Delta}^d_+;\R^d) \rightarrow \mathcal{F}^{2+\alpha,K}$ given by\footnote{Note that $\Phi$ plays the role of the embedding $\iota$ in the previous section.}
  \begin{equation*}
    F \, \mapsto \, \Phi(F) := (\pi^F, \pi^{F,\prime}),
  \end{equation*}
  where $\pi^F$ is of the form in \eqref{eq:portfoliogen}, we thus have that $\Phi$ is continuous with respect to the $C^{2}$-distance on $C^{2+\alpha,K}(\overline{\Delta}^d_+;\R^d)$ and each of the seminorms $\{p^{\mu,p}_T\}_{T > 0}$ on $\mathcal{F}^{2+\alpha,K} \subset \mathcal{A}^{M,p}(c_\mu)$. As the notation suggests, here $p^{\mu,p}_T$ is defined as in \eqref{eq: seminorms} with $q'$ replaced by $p$.
\end{lemma}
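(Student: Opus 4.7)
The plan is to follow the strategy already employed in the proof of Lemma~\ref{lem: functionally generated portfolios are in base set}, but applied directly to the difference $H := F - G$. A direct expansion of \eqref{eq:portfoliogen} gives
\begin{equation*}
\frac{\pi^F_t - \pi^G_t}{\mu_t} = H(\mu_t) - (\mu_t \cdot H(\mu_t))\mathbf{1} =: g(\mu_t),
\end{equation*}
where $g(x) := H(x) - (x \cdot H(x))\mathbf{1}$ is a $C^2$ map on $\overline{\Delta}^d_+$. Since $\overline{\Delta}^d_+$ is compact, the product and chain rules for the term $x \mapsto x \cdot H(x)$ immediately yield $\|g\|_{C^2} \leq C_d \|H\|_{C^2}$, with $C_d$ depending only on the dimension.

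Next, I would identify the $\mu$-controlled path $(((\pi^F - \pi^G)/\mu), ((\pi^F - \pi^G)/\mu)')$ with the standard structure $(g(\mu), \mathrm{D}g(\mu))$ associated to the smooth function $g$ (cf.\ Example~\ref{ex: controlled path 1+epsilon}). The seminorm $p^{\mu,p}_T$ then decomposes into the initial values $|g(\mu_0)|$ and $|\mathrm{D}g(\mu_0)|$ — bounded by $\|g\|_{C^1}$ — together with $\|\mathrm{D}g(\mu)\|_{p,[0,T]}$ and $\|R^{g(\mu)}\|_{p/2,[0,T]}$. Two applications of Taylor's theorem (exactly as in \eqref{eq:bounds on (g,g')}) give
\begin{equation*}
|\mathrm{D}g(\mu)_{s,t}| \leq \|\mathrm{D}^2 g\|_\infty |\mu_{s,t}|, \qquad |R^{g(\mu)}_{s,t}| \leq \tfrac{1}{2}\|\mathrm{D}^2 g\|_\infty |\mu_{s,t}|^2,
\end{equation*}
so these two terms are controlled by $\|g\|_{C^2}\|\mu\|_{p,[0,T]}$ and $\|g\|_{C^2}\|\mu\|_{p,[0,T]}^2$ respectively. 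Summing, substituting $\|g\|_{C^2} \leq C_d \|H\|_{C^2}$, and using $\|\mu\|_{p,[0,T]} \leq 1 + \|\mu\|_{p,[0,T]}^2$, yields the required estimate \eqref{eq: local estimates for generalized functionally generated portfolios}.

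The second claim — continuity of $\Phi$ with respect to the $C^2$-distance on $C^{2+\alpha,K}(\overline{\Delta}^d_+;\mathbb{R}^d)$ and each of the seminorms $p^{\mu,p}_T$ on $\mathcal{F}^{2+\alpha,K}$ — is then an immediate consequence of the bound, since the right-hand side of \eqref{eq: local estimates for generalized functionally generated portfolios} is finite for every fixed $T$ and tends to zero as $\|F - G\|_{C^2} \to 0$. The main (and only mildly technical) obstacle is the bookkeeping behind the estimate $\|g\|_{C^2} \leq C_d\|H\|_{C^2}$, which however is routine because only finitely many polynomial-in-$x$ factors appear when differentiating $x \cdot H(x)$ and each such factor is uniformly bounded on $\overline{\Delta}^d_+$; once this is in place, everything else reduces to the same controlled-path Taylor estimates already invoked in Lemma~\ref{lem: functionally generated portfolios are in base set}.
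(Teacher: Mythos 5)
Your proof is correct and takes essentially the same route as the paper's: both hinge on Taylor-type bounds for smooth-function-of-$\mu$ controlled paths and the compactness of $\overline{\Delta}^d_+$. The one genuine (if minor) difference is organizational: where the paper estimates $F(\mu)-G(\mu)$ and $\mu\cdot(F(\mu)-G(\mu))$ as two separate controlled paths and handles the cross-term in the product remainder by hand, you fold everything into the single $C^2$ function $g(x) = H(x) - (x\cdot H(x))\mathbf{1}$ and apply the Taylor bounds once; this is cleaner and avoids the explicit product-remainder computation. One step worth being a bit more explicit about in a final write-up is the identification $((\pi^F-\pi^G)/\mu)' = \D g(\mu)$: a priori the left-hand side is defined via Lemma~\ref{lem: product of controlled rough paths} as $\pi^{F,\prime}/\mu + \pi^F(1/\mu)^\prime - \pi^{G,\prime}/\mu - \pi^G(1/\mu)^\prime$, and one should note that the Leibniz rule for the product of controlled paths agrees componentwise with the ordinary chain rule applied to $g$, so the two Gubinelli derivatives coincide; this is true and routine, but it is the point that makes your shortcut legitimate.
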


\begin{proof}
  In the following, for notational simplicity we will omit the Gubinelli derivative in the norms $\|\hspace{1pt}\cdot\hspace{1pt},\hspace{1pt}\cdot\hspace{1pt}\|_{\mathcal{V}^p_\mu,[0,T]}$ and seminorms $p^{\mu,p}_T((\hspace{1pt}\cdot\hspace{1pt},\hspace{1pt}\cdot\hspace{1pt}))$; that is, we will write e.g.~$\|\pi\|_{\mathcal{V}^p_\mu,[0,T]}$ instead of $\|\pi,\pi'\|_{\mathcal{V}^p_\mu,[0,T]}$. Let $F, G \in C^{2+\alpha,K}$ and $s \leq t$. We have
  \begin{align*}
   \Big|(\D F - \D G)(\mu_t) - (\D F - \D G)(\mu_s)\Big| &= \bigg|\int_0^1 (\D^2F - \D^2G)(\mu_s + \lambda \mu_{s,t}) \mu_{s,t} \dd \lambda\bigg|\\
   &\leq \|\D^2F - \D^2G\|_{\infty} |\mu_{s,t}|,
  \end{align*}
  so that
  \begin{equation*}
    \|\D F(\mu)  - \D G(\mu)\|_{p,[0,T]} \leq \|F - G\|_{C^2} \|\mu\|_{p,[0,T]}.
  \end{equation*}
  Similarly, since
  \begin{equation*}
    R^{F(\mu)}_{s,t} = F(\mu_t) - F(\mu_s) - \D F(\mu_s) \mu_{s,t} = \int_0^1 \int_0^1 \D^2F(\mu_s + \lambda_1 \lambda_2 \mu_{s,t}) \mu_{s,t}^{\otimes 2} \lambda_1 \dd \lambda_2 \dd \lambda_1,
  \end{equation*}
  we have
  \begin{equation*}
    \|R^{F(\mu)} - R^{G(\mu)}\|_{\frac{p}{2},[0,T]} \leq \|F - G\|_{C^2} \|\mu\|_{p,[0,T]}^2.
  \end{equation*}
  Thus, for $\mu$-controlled paths $(F(\mu), \D F(\mu))$ and $(G(\mu), \D G(\mu))$, we have that
  \begin{equation}\label{eq: F(mu) - G(mu) cont path norm}
    \|F(\mu) - G(\mu)\|_{\mathcal{V}^p_\mu,[0,T]} \lesssim \|F - G\|_{C^2} (1 + \|\mu\|_{p,[0,T]}^2).
  \end{equation}
  Writing $\pi^F_t/\mu_t = F(\mu_t) + (1 - \mu_t \cdot F(\mu_t))\1$ and $\pi^G_t/\mu_t = G(\mu_t) + (1 - \mu_t \cdot G(\mu_t))\1$, we have that
  \begin{equation*}
    \frac{\pi^F_t - \pi^G_t}{\mu_t} = F(\mu_t) - G(\mu_t) - (\mu_t \cdot (F(\mu_t) - G(\mu_t)))\1,
  \end{equation*}
  so that
  \begin{equation}\label{eq:seminorm of pi^F - pi^G}
    p^{\mu,p}_T(\pi^F - \pi^G) \lesssim \|F(\mu) - G(\mu)\|_{\mathcal{V}^p_\mu,[0,T]} + \|\mu \cdot (F(\mu) - G(\mu))\|_{\mathcal{V}^p_\mu,[0,T]}.
  \end{equation}
  Similarly to the proof of Lemma~\ref{lem: functionally generated portfolios are in base set}, noting that $R^{\mu \cdot (F(\mu) - G(\mu))}_{s,t} = \mu_s \cdot R^{F(\mu) - G(\mu)}_{s,t} + \mu_{s,t} \cdot (F(\mu) - G(\mu))_{s,t}$, we have that
  \begin{equation*}
    |R^{\mu \cdot (F(\mu) - G(\mu))}_{s,t}| \leq \|\mu\|_{\infty,[0,T]} |R^{F(\mu) - G(\mu)}_{s,t}| + |\mu_{s,t}| |(F(\mu) - G(\mu))_{s,t}| \lesssim \|F - G\|_{C^2} |\mu_{s,t}|^2,
  \end{equation*}
  where we used the fact that $\mu$ is bounded, and we deduce that
  \begin{equation*}
    \|\mu \cdot (F(\mu) - G(\mu))\|_{\mathcal{V}^p_\mu,[0,T]} \lesssim \|F - G\|_{C^2} (1 + \|\mu\| _{p,[0,T]}^2).
  \end{equation*}
  Combining this with \eqref{eq: F(mu) - G(mu) cont path norm} and \eqref{eq:seminorm of pi^F - pi^G}, we obtain the estimate in \eqref{eq: local estimates for generalized functionally generated portfolios}, which then implies the desired continuity of $\Phi$.
\end{proof}

\begin{lemma}\label{lemma continuity log wealth func cont port}
  For any $T > 0$ and any $F, G \in C^{2+\alpha,K}(\overline{\Delta}^d_+;\R^d)$, we have that
  \begin{equation}\label{eq: bound for log wealth for generalized functionally generated portfolios}
    |\log V^{\pi^F}_T - \log V^{\pi^G}_T| \leq C \|F - G\|_{C^2} (1 + \|\mu\|_{p,[0,T]}^2) \xi_T,
  \end{equation}
  where $\xi_T$ is defined as in \eqref{eq:defn xi_T}, and the constant $C$ depends only on $p, d$ and $K$.
\end{lemma}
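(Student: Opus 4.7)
The plan is to mimic the proof of Lemma 4.5, but with $\pi = \pi^F$ and $\phi = \pi^G$, using Lemma 4.7 (which has just been proved) to convert the estimate on the seminorm $p^{\mu,p}_T$ into an estimate in terms of $\|F-G\|_{C^2}$.

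First, applying Proposition 3.5 to both $\pi^F$ and $\pi^G$ and taking the difference, I obtain
\begin{equation*}
\log V^{\pi^F}_T - \log V^{\pi^G}_T = \int_0^T \frac{\pi^F_s - \pi^G_s}{\mu_s} \, \d \mu_s - \frac{1}{2} \sum_{i,j=1}^d \int_0^T \frac{(\pi^{F,i}_s - \pi^{G,i}_s)\pi^{F,j}_s + \pi^{G,i}_s(\pi^{F,j}_s - \pi^{G,j}_s)}{\mu^i_s \mu^j_s} \, \d [\mu]^{ij}_s,
\end{equation*}
after using the identity $ab - cd = (a-c)b + c(b-d)$ inside the quadratic term. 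The first integral is a rough integral against the $S$-controlled path $\mu$, while the rest are ordinary Riemann--Stieltjes integrals against the finite variation paths $[\mu]^{ij}$.

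For the rough integral, I invoke the estimate \eqref{eq:rough integral estimate} from Theorem~\ref{thm: rough integral exists} with $p = q$, yielding
\begin{equation*}
\bigg|\int_0^T \frac{\pi^F_s - \pi^G_s}{\mu_s} \, \d \mu_s\bigg| \lesssim p^{\mu,p}_T(\pi^F - \pi^G) \, \bigl(\|\mu\|_{p,[0,T]} + \|A^\mu\|_{\frac{p}{2},[0,T]}\bigr),
\end{equation*}
where the initial-value contributions are absorbed into $p^{\mu,p}_T$. Lemma~\ref{lem: continuity embedding of function spaces} then controls the seminorm by $\|F-G\|_{C^2}(1 + \|\mu\|_{p,[0,T]}^2)$, giving a contribution to the final bound of the form $\|F-G\|_{C^2}(1 + \|\mu\|_{p,[0,T]}^2)(\|\mu\|_{p,[0,T]} + \|A^\mu\|_{p/2,[0,T]})$.

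For the Riemann--Stieltjes pieces, I bound each integral crudely by
\begin{equation*}
\bigg|\int_0^T \frac{(\pi^{F,i}_s - \pi^{G,i}_s)\pi^{F,j}_s}{\mu^i_s \mu^j_s} \, \d [\mu]^{ij}_s\bigg| \lesssim \Big\|\frac{\pi^F - \pi^G}{\mu}\Big\|_{\infty,[0,T]} \Big\|\frac{\pi^F}{\mu}\Big\|_{\infty,[0,T]} \sum_{i=1}^d [\mu]^{ii}_T,
\end{equation*}
and analogously for the other term. Using $\pi^F_t/\mu_t = F(\mu_t) + (1 - \mu_t \cdot F(\mu_t))\1$ together with the boundedness of $\mu$ and $\|F\|_{C^2} \leq K$, one sees that $\|\pi^F/\mu\|_{\infty,[0,T]}$ and $\|\pi^G/\mu\|_{\infty,[0,T]}$ are bounded by a constant depending only on $K$ and $d$, while $\|(\pi^F - \pi^G)/\mu\|_{\infty,[0,T]} \lesssim \|F - G\|_{C^2}$. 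Hence the second group of terms is bounded by a multiple of $\|F-G\|_{C^2} \sum_i [\mu]^{ii}_T$.

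Putting the two contributions together and recalling the definition $\xi_T = \|\mu\|_{p,[0,T]} + \|A^\mu\|_{p/2,[0,T]} + \sum_{i=1}^d [\mu]^{ii}_T$, one obtains
\begin{equation*}
|\log V^{\pi^F}_T - \log V^{\pi^G}_T| \lesssim \|F - G\|_{C^2} (1 + \|\mu\|_{p,[0,T]}^2) \, \xi_T,
\end{equation*}
as required. There is no real obstacle in this argument; the only care needed is in verifying that the uniform bounds on $\pi^F/\mu$ use only $\|F\|_{C^0}$ and the boundedness of $\mu$ (so no factor of $\|\mu\|_{p,[0,T]}$ slips into these sup-norm estimates), which keeps the final estimate linear in $\xi_T$ and quadratic in $\|\mu\|_{p,[0,T]}$.
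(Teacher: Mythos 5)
Your proof is correct and follows essentially the same route as the paper: both reduce the estimate to bounding the rough integral of $(\pi^F-\pi^G)/\mu$ against $\mu$ via \eqref{eq:rough integral estimate} together with Lemma~\ref{lem: continuity embedding of function spaces}, and bounding the Riemann--Stieltjes integrals against $[\mu]^{ij}$ by sup-norm estimates and $\sum_i [\mu]^{ii}_T$. The only cosmetic difference is the algebraic splitting of $\pi^{F,i}\pi^{F,j}-\pi^{G,i}\pi^{G,j}$, where you use $(a-c)b+c(b-d)$ and the paper uses $(a-c)(b+d)$, discarding the antisymmetric cross term by the symmetry of $[\mu]^{ij}$ in $(i,j)$.
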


\begin{proof}
  We recall that during the proof of Lemma~\ref{lem: continuity of relative wealth} we showed that
  \begin{equation*}
    |\log V^{\pi^F}_T - \log V^{\pi^G}_T| \leq \bigg|\int_0^T \frac{\pi^F_s - \pi^G_s}{\mu_s} \dd \mu_s\bigg| + \frac{1}{2} \bigg|\sum_{i,j=1}^d \int_0^T \frac{(\pi^{F,i}_s - \pi^{G,i}_s)(\pi^{F,j}_s + \pi^{G,j}_s)}{\mu^i_s \mu^j_s} \dd [\mu]^{ij}_s\bigg|,
  \end{equation*}
  and (in the current setting replacing $q'$ by $p$)
  \begin{equation*}
    \bigg|\int_0^T \frac{\pi^F_s - \pi^G_s}{\mu_s} \dd \mu_s\bigg| \lesssim p^{\mu,p}_T((\pi^F,\pi^{F,\prime}) - (\pi^G,\pi^{G,\prime}))(\|\mu\|_{p,[0,T]} + \|A^\mu\|_{\frac{p}{2},[0,T]}).
  \end{equation*}
  By the estimate in \eqref{eq: local estimates for generalized functionally generated portfolios}, we obtain
  \begin{equation*}
    \bigg|\int_0^T \frac{\pi^F_s - \pi^G_s}{\mu_s} \dd \mu_s\bigg| \lesssim (1 + \|\mu\|_{p,[0,T]}^2) (\|\mu\|_{p,[0,T]} + \|A^\mu\|_{\frac{p}{2},[0,T]}) \|F - G\|_{C^2}.
  \end{equation*}
  Since $\|F\|_{C^{2+\alpha}} \leq K$ and $\|G\|_{C^{2+\alpha}} \leq K$, recalling \eqref{eq:portfoliogen}, we can verify that
  \begin{equation*}
    \bigg|\frac{(\pi^{F,i}_s - \pi^{G,i}_s)(\pi^{F,j}_s + \pi^{G,j}_s)}{\mu^i_s \mu^j_s}\bigg| \lesssim \|F - G\|_{C^2}.
  \end{equation*}
  Hence, we have that
  \begin{equation*}
    \bigg|\sum_{i,j=1}^d \int_0^T \frac{(\pi^{F,i}_s - \pi^{G,i}_s)(\pi^{F,j}_s + \pi^{G,j}_s)}{\mu^i_s \mu^j_s} \dd [\mu]^{ij}_s\bigg| \lesssim \|F - G\|_{C^2} \sum_{i=1}^d [\mu]^{ii}_T.
  \end{equation*}
  Combining the estimates above, we obtain \eqref{eq: bound for log wealth for generalized functionally generated portfolios}.
\end{proof}

As a special case of Theorem~\ref{thm: Cover's theorem}, we can deduce an asymptotic growth rate for the universal portfolio in the case that our portfolios are restricted to the class $\mathcal{F}^{2+\alpha,K}$ of functionally controlled portfolios.

\smallskip

Let $m$ be a fixed probability measure on $C^{2+\alpha, K} = C^{2+\alpha,K}(\overline{\Delta}^d_+;\R^d)$, and define $\nu := \Phi_* m$ as the pushforward measure on $\mathcal{F}^{2+\alpha,K}$ of $m$ under the map $\Phi$ given in Lemma~\ref{lem: continuity embedding of function spaces}. The universal portfolio based on functionally controlled portfolios is then defined by
\begin{equation}\label{eq: universal portfolio1}
  \pi^\nu_t := \frac{\int_{\mathcal{F}^{2+\alpha,K}} \pi_t V^\pi_t \dd \nu(\pi)}{\int_{\mathcal{F}^{2+\alpha,K}} V^\pi_t \dd \nu(\pi)}, \qquad t \in [0,\infty),
\end{equation}
and the wealth process of the best retrospectively chosen portfolio is defined as
\begin{equation}\label{eq: best retrospective portfolio1}
V^{\ast,K,\alpha}_T := \sup_{\pi \in \mathcal{F}^{2+\alpha,K}} V^\pi_T = \sup_{F \in C^{2+\alpha,K}} V^{\pi^F}_T.
\end{equation}

By Lemma~\ref{lemma continuity log wealth func cont port}, the mapping $F \mapsto V^{\pi^F}_T$ is a continuous map on $C^{2+\alpha,K}$ with respect to the $C^2$-norm. We also have that $C^{2+\alpha,K}$ is compact with respect to the $C^2$-norm (see \cite[Lemma~4.1]{Cuchiero2019}). Combining these two facts, we see that, for each $T > 0$, there exists a function $F^\ast_T \in C^{2+\alpha,K}$ such that
\begin{equation*}
V^{\ast,K,\alpha}_T = V^{\pi^{F^\ast_T}}_T.
\end{equation*}


\begin{theorem}\label{thm: Cover's theorem for generalized functionally generated portfolios}
Let $m$ be a probability measure on $C^{2+\alpha,K}$ with full support. Let $\pi^\nu$ be the universal portfolio as defined in \eqref{eq: universal portfolio1}, and define $V^{\ast,K,\alpha}_T$ as in \eqref{eq: best retrospective portfolio1}.

\begin{enumerate}[(i)]
\item If $\lim_{T \to \infty} (1 + \|\mu\|_{p,[0,T]}^2) \xi_T = \infty$, where as usual $\xi_T$ is defined as in \eqref{eq:defn xi_T}, then
\begin{equation}\label{eq:Cover's theorem func cont part i}
\lim_{T \to \infty} \frac{1}{(1 + \|\mu\|_{p,[0,T]}^2) \xi_T} \Big(\log V^{\ast,K,\alpha}_T - \log V^{\pi^\nu}_T\Big) = 0.
\end{equation}
\item With the shorthand notation $\xi_{k,k+1} := \|\mu\|_{p,[k,k+1]} + \|A^\mu\|_{\frac{p}{2},[k,k+1]} + \sum_{i=1}^d [\mu]^{ii}_{k,k+1}$ for each $k \in \N$, if $\lim_{T \to \infty} \sum_{k=0}^{\lceil T \rceil - 1} (1 + \|\mu\|_{p,[k,k+1]}^2) \xi_{k,k+1} = \infty$, then
\begin{equation}\label{eq:Cover's theorem func cont part ii}
\lim_{T \to \infty} \frac{1}{\sum_{k=0}^{\lceil T \rceil - 1} (1 + \|\mu\|_{p,[k,k+1]}^2) \xi_{k,k+1}} \Big(\log V^{\ast,K,\alpha}_T - \log V^{\pi^\nu}_T\Big) = 0.
\end{equation}
\end{enumerate}
\end{theorem}

\begin{proof}
The result of part~(i) follows from Theorem~\ref{thm: Cover's theorem} applied with $\mathcal{K} = C^{2+\alpha,K}$, $d_{\mathcal{K}}(F,G) = \|F - G\|_{C^2}$, $\iota = \Phi$ and $\lambda(T) = (1 + \|\mu\|_{p,[0,T]}^2) \xi_T$, noting from the result of Lemma~\ref{lemma continuity log wealth func cont port} that the bound in \eqref{eq: lipschitz type estimation for embedding} is indeed satisfied in this case.

The result of part~(ii) follows similarly with $\lambda(T) = \sum_{k=0}^{\lceil T \rceil - 1} (1 + \|\mu\|_{p,[k,k+1]}^2) \xi_{k,k+1}$. That the bound in \eqref{eq: lipschitz type estimation for embedding} is satisfied in this case follows from a very straightforward adaptation of the proofs of Lemmas~\ref{lem: continuity embedding of function spaces} and \ref{lemma continuity log wealth func cont port}, whereby the same estimates are applied over the subinterval $[k,k+1]$ for each $k = 0, \ldots, \lceil T \rceil - 1$, and the integrals over $[0,T]$ in the proof of Lemma~\ref{lemma continuity log wealth func cont port} are trivially bounded by the sum of integrals over these subintervals.
\end{proof}

\begin{remark}
The result of Theorem~\ref{thm: Cover's theorem for generalized functionally generated portfolios} is stated for two different ``clocks'', namely $(1 + \|\mu\|_{p,[0,T]}^2) \xi_T$ and $\sum_{k=0}^{\lceil T \rceil - 1} (1 + \|\mu\|^2_{p,[k,k+1]}) \xi_{k,k+1}$. One may wonder whether one of these clocks always dominates the other, making one of the statements superfluous. However, this is not the case.

On the one hand, in Section~\ref{sect: example of nontriviality of growth rate} below we will exhibit a particular scenario which demonstrates the non-triviality of the growth rate established in \eqref{eq:Cover's theorem func cont part i}. In this setting, one may check that $(1 + \|\mu\|_{p,[0,T]}^2) \xi_T$ gives a strictly better asymptotic rate than if one were to use the sum over a partition of subintervals, as in part~(ii) of Theorem~\ref{thm: Cover's theorem for generalized functionally generated portfolios}.

On the other hand, in Section~\ref{sec:Functionally controlled protfolios in probabilistic models} below we will consider a probabilistic model, where the market portfolio $\mu$ is given by the solution of a stochastic differential equation driven by Brownian motion. Using the fact that Brownian motion has independent increments, and the strong law of large numbers, in Theorem~\ref{thm:complogopt} we will use \eqref{eq:Cover's theorem func cont part ii} to improve the asymptotic growth rate to $T$. That is, we will actually show that, almost surely,
\begin{equation*}
\lim_{T \to \infty} \frac{1}{T} \Big(\log V^{\ast,K,\alpha}_T - \log V^{\pi^\nu}_T\Big) = 0.
\end{equation*}
It is therefore valuable to include both parts of Theorem~\ref{thm: Cover's theorem for generalized functionally generated portfolios}.
\end{remark}

\begin{remark}
Strictly speaking, Theorems~\ref{thm: Cover's theorem} (which also recovers the version of Cover's theorem established in \cite{Cuchiero2019}) and \ref{thm: Cover's theorem for generalized functionally generated portfolios} do not say that the universal portfolio $\pi^\nu$ performs asymptotically as well as the best retrospectively chosen one; rather, they provide bounds on how large the gap can become as time increases. For instance, for classical functionally generated portfolios of the form in \eqref{eq: canonical function generated portfolio} the gap is $o(\max_{i = 1, \ldots, d} [\mu]^{ii}_T)$, and for functionally controlled portfolios of the form in \eqref{eq:portfoliogen} the gap is, e.g.~$o((1 + \|\mu\|_{p,[0,T]}^2) \xi_T)$.
\end{remark}


\subsection{The non-triviality of the asymptotic growth rate}\label{sect: example of nontriviality of growth rate}

In this section we will show that the asymptotic growth rate $\lambda(T) = (1 + \|\mu\|_{p,[0,T]}^2) \xi_T$ for functionally controlled portfolios, as established in part~(i) of Theorem~\ref{thm: Cover's theorem for generalized functionally generated portfolios}, is non-trivial, in the sense that there exists an instance of the market portfolio $\mu = (\mu_t)_{t \in [0,\infty)}$ such that
\begin{equation*}
\limsup_{T \to \infty}\frac{\log V^{\ast,K,\alpha}_T}{(1 + \|\mu\|_{p,[0,T]}^2) \xi_T} > 0 \quad \text{and} \quad \lim_{T \to \infty} \frac{\log V^{\ast,K,\alpha}_T - \log V^{\pi^\nu}_T}{(1 + \|\mu\|_{p,[0,T]}^2) \xi_T} = 0,
\end{equation*}
where $\nu = \Phi_{\ast} m$ for an arbitrary probability measure $m$ on $C^{2+\alpha,K}$ with full support.

\begin{lemma}\label{lemma: asymptotic growth rate is nontrivial}
  Let $p \in (2,3)$ as usual, and then fix $\lambda > 0$ such that $\frac{1}{p} < \lambda < \frac{1}{2}$. Let $d = 3$ and let $\mu = (\mu_t)_{t \in [0,\infty)}$ be the continuous $\Delta^3_+$-valued path given by
  \begin{equation*}
    \mu_t = \left(\hspace{-4pt}\begin{array}{c}
    \mu^1_t\\
    \mu^2_t\\
    \mu^3_t
    \end{array}\hspace{-4pt}\right) = \left(\hspace{-4pt}\begin{array}{c}
    \frac{1}{3} (1 + \frac{k^{-\lambda}}{3} (1 - \cos t))\\
    \frac{1}{3} (1 + \frac{k^{-\lambda}}{3} \sin t)\\
    \frac{1}{3} (1 + \frac{k^{-\lambda}}{3} (\cos t - 1 - \sin t))
    \end{array}\hspace{-4pt}\right), \qquad t \in [2\pi (k-1),2\pi k),
  \end{equation*}
  for each $k \in \N$. For $\alpha \in (0,1]$ and $K > 0$, let $V^{\ast,K,\alpha}_T$ be the wealth induced by the best retrospectively chosen portfolio over $\mathcal{F}^{2+\alpha,K}$ at time $T$. Then
  \begin{equation*}
    \limsup_{T \to \infty}\frac{\log V^{\ast,K,\alpha}_T}{(1 + \|\mu\|_{p,[0,T]}^2) \xi_T} > 0.
  \end{equation*}
\end{lemma}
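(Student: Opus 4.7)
The plan is to take $S := \mu$ itself, which is strictly positive ($\Delta^3_+$-valued) and whose induced market weights trivially equal $\mu$. Since on each period $[2\pi(k-1), 2\pi k]$ the path is smooth and the one-sided derivatives at each junction $t=2\pi k$ both vanish (because $\sin(2\pi k) = 0$), $\mu$ is globally $C^1$ with locally finite $1$-variation. This makes Property \textup{(RIE)} easy to verify along any mesh-vanishing sequence of partitions, the iterated integral $A^\mu_{s,t} := \int_s^t \mu_{s,u} \otimes \dd\mu_u$ is the classical Riemann--Stieltjes integral, and by Lemma~\ref{lem: property of bracket of rough paths} the bracket $[\mu]$ vanishes identically.

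First, I would show that the denominator $(1+\|\mu\|_{p,[0,T]}^2)\xi_T$ is uniformly bounded in $T$. On period $k$, $\mu$ traces a loop of diameter $\asymp k^{-\lambda}$ enclosing area $\asymp k^{-2\lambda}$, giving
\begin{equation*}
\|\mu\|_{p,[2\pi(k-1),2\pi k]}^p \lesssim k^{-\lambda p}, \qquad \|A^\mu\|_{p/2,[2\pi(k-1),2\pi k]}^{p/2} \lesssim k^{-\lambda p}.
\end{equation*}
The standard merging bound $\|\mu\|_{p,[0,T]}^p \leq C \sum_k \|\mu\|_{p,[t_{k-1},t_k]}^p$ across concatenated intervals then yields $\|\mu\|_{p,[0,2\pi N]}^p + \|A^\mu\|_{p/2,[0,2\pi N]}^{p/2} \lesssim \sum_{k=1}^N k^{-\lambda p}$. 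Since $\lambda p > 1$, this series converges; together with $\sum_i [\mu]^{ii}_T = 0$ (smooth path $\Rightarrow$ zero bracket) we obtain $\sup_{T > 0}(1+\|\mu\|_{p,[0,T]}^2)\xi_T < \infty$.

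For the numerator, I would exhibit one functionally controlled portfolio with divergent log-wealth. Take $F^1 \equiv F^3 \equiv 0$, $F^2(x) := -c\,x^1$, with $c > 0$ chosen small enough that $\|F\|_{C^{2+\alpha}} \leq K$; then $(\pi^F,\pi^{F,\prime}) \in \mathcal{F}^{2+\alpha,K}$. Proposition~\ref{prop: log of relative wealth}, together with $[\mu]=0$, the identity $\pi^F/\mu = F(\mu) + (1 - \mu \cdot F(\mu))\1$, and the constraint $\sum_i \dd\mu^i_s = 0$ (since $\mu$ is simplex-valued), collapses the master formula to
\begin{equation*}
\log V^{\pi^F}_T = \int_0^T \frac{\pi^F_s}{\mu_s}\dd\mu_s = -c\int_0^T \mu^1_s \dd\mu^2_s.
\end{equation*}
Because $\mu$ is $C^1$, this is a classical Riemann--Stieltjes line integral. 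On period $k$ the curve $s \mapsto (\mu^1_s,\mu^2_s)$ is a clockwise circle of radius $k^{-\lambda}/9$, so by Green's theorem each period contributes the positive increment $c\pi(k^{-\lambda}/9)^2 \asymp k^{-2\lambda}$. Using $2\lambda < 1$,
\begin{equation*}
\log V^{*,K,\alpha}_{2\pi N} \geq \log V^{\pi^F}_{2\pi N} \gtrsim \sum_{k=1}^N k^{-2\lambda} \asymp N^{1-2\lambda} \;\longrightarrow\; \infty.
\end{equation*}
Combined with boundedness of the denominator, this forces $\limsup_{T \to \infty}\log V^{*,K,\alpha}_T/[(1+\|\mu\|_{p,[0,T]}^2)\xi_T] = +\infty > 0$.

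The main obstacle I anticipate is the rigorous identification of the rough integral in the master formula with the classical line integral of $F$ along the smooth loop, which is what makes Green's theorem applicable. I expect this to follow routinely from Theorem~\ref{thm: Ito integral for smooth transformed RIE path}: for $\mu \in C^1$, the rough integral equals the limit of the left-point Riemann sums $\sum F(\mu_{t^n_k})\mu_{t^n_k,t^n_{k+1}}$, which is precisely the Riemann--Stieltjes integral, and the compensated area term drops out because the bracket vanishes.
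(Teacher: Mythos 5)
Your treatment of the numerator is correct and essentially matches the paper: you identify the master formula with a classical Riemann--Stieltjes line integral (the bracket vanishes because $\mu$ is smooth), choose a non-gradient $F$, and observe that the signed area of each clockwise loop contributes $\asymp k^{-2\lambda}$, so $\log V^{\pi^F}_{2\pi n} \asymp \sum_{k=1}^n k^{-2\lambda} \to \infty$. (The paper uses $F(x) = (x_2,0,0)^\top$, which gives $\log V^{\pi^F}_T = \int_0^T \mu^2_t\, \dd\mu^1_t$, but your choice is equivalent up to sign and constant.)

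However, your treatment of the denominator contains a genuine error: $\|A^\mu\|_{p/2,[0,T]}$ is \emph{not} uniformly bounded, so the denominator diverges as $T \to \infty$. The ``merging bound'' you invoke for the area process fails. For a one-parameter path that returns to the same point at each $2\pi k$, one can indeed control the global $p$-variation by the per-period $p$-variations, which is why $\|\mu\|_{p,[0,2\pi n]}$ stays bounded (the paper uses exactly that $\lambda p > 1$). But $A^\mu$ is a two-parameter function satisfying Chen's relation, not additivity, and the area accumulates: since $\mu_{2\pi(k-1)} = \mu_{2\pi k} = (\tfrac13,\tfrac13,\tfrac13)$ for every $k$, all cross-terms in Chen's relation vanish along the partition $\{0, 2\pi, 4\pi, \ldots\}$, giving
\begin{equation*}
A^\mu_{0,2\pi n} = \sum_{k=1}^n A^\mu_{2\pi(k-1),2\pi k},
\end{equation*}
and each summand has the same sign with $|A^{\mu,2,1}_{2\pi(k-1),2\pi k}| = \tfrac{\pi}{81} k^{-2\lambda}$. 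Testing the trivial partition $\{0,2\pi n\}$ in the definition of $p/2$-variation yields
\begin{equation*}
\|A^\mu\|_{p/2,[0,2\pi n]} \geq |A^\mu_{0,2\pi n}| \gtrsim \sum_{k=1}^n k^{-2\lambda} \longrightarrow \infty,
\end{equation*}
since $2\lambda < 1$. Hence $\xi_T$ diverges, and your conclusion ``$\limsup = +\infty$'' is false. The paper's proof shows instead that both $\log V^{\pi^F}_{2\pi n}$ and $\xi_{2\pi n}$ are of exact order $\sum_{k=1}^n k^{-2\lambda}$ (with $\|\mu\|_{p,[0,T]}^2$ bounded), so the ratio converges to a finite positive constant rather than to infinity; this still delivers $\limsup > 0$, but by a more delicate matching of growth rates rather than by a bounded denominator.
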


\begin{proof}
  Recall that for any portfolio~$\pi$, it follows from Proposition~\ref{prop: log of relative wealth} that
  \begin{equation*}
    \log V^\pi_T = \int_0^T \frac{\pi_s}{\mu_s} \dd \mu_s - \frac{1}{2} \sum_{i,j = 1}^d \int_0^T \frac{\pi^i_s \pi^j_s}{\mu^i_s \mu^j_s} \dd [\mu]^{ij}_s.
  \end{equation*}
  Clearly, since $\mu$ is continuous with bounded variation on every compact interval, we have that $[\mu] = 0$, so that the second term vanishes. For any functionally controlled portfolio $\pi^F \in \mathcal{F}^{2+\alpha,K}$, using the relation
  \begin{equation*}
    \frac{\pi^{F,i}_t}{\mu^i_t} = F^i(\mu_t) + 1 - \sum_{j=1}^d \mu^j_t F^j(\mu_t), \qquad i = 1, \ldots, d,
  \end{equation*}
  together with the fact that $\sum_{i=1}^d \d \mu^i_t = 0$ (since $\sum_{i=1}^d \mu^i_t = 1$), we deduce that
  \begin{equation}\label{eq: a simple expression of log wealth}
    \log V^{\pi^F}_T =  \int_0^T \frac{\pi^F_t}{\mu_t} \dd \mu_t = \sum_{i=1}^d \int_0^T \frac{\pi^{F,i}_t}{\mu^i_t} \dd \mu^i_t = \sum_{i=1}^d \int_0^T F^i(\mu_t) \dd \mu^i_t.
  \end{equation}
  We now choose the function $F \in C^{2+\alpha,K}$ given by
  \begin{equation*}
    F(x) = \left(\hspace{-4pt}\begin{array}{c}
    x_2\\
    0\\
    0
    \end{array}\hspace{-4pt}\right)
  \end{equation*}
  for $x = (x_1,x_2,x_3)^\top \in \overline{\Delta}^3_+$. Substituting this function into \eqref{eq: a simple expression of log wealth}, we have
  \begin{equation*}
    \log V^{\pi^F}_T  = \int_0^T \frac{\pi^F_t}{\mu_t} \dd \mu_t = \sum_{i=1}^3 \int_0^T F^i(\mu_t) \dd \mu^i_t = \int_0^T \mu^2_t \dd \mu^1_t.
  \end{equation*}
  For $n \in \N$, we compute
  \begin{align*}
    \int_0^{2\pi n} \mu^2_t \dd \mu^1_t &= \sum_{k=1}^n \int_{2\pi (k-1)}^{2\pi k} \mu^2_t \dd \mu^1_t = \sum_{k=1}^n \int_0^{2\pi} \frac{1}{3} \bigg(1 + \frac{k^{-\lambda}}{3} \sin t\bigg) \cdot \frac{k^{-\lambda}}{9} \sin t \dd t\\
    &= \sum_{k=1}^n \frac{k^{-2\lambda}}{81} \int_0^{2\pi} \sin^2 t \dd t = \frac{\pi}{81} \sum_{k=1}^n k^{-2\lambda},
  \end{align*}
  and note that
  \begin{equation*}
    \|\mu\|_{p,[0,2\pi n]} \lesssim \bigg(\sum_{k=1}^n k^{-\lambda p}\bigg)^{\hspace{-2pt}\frac{1}{p}} < \bigg(\sum_{k=1}^\infty k^{-\lambda p}\bigg)^{\hspace{-2pt}\frac{1}{p}} < \infty
  \end{equation*}
  for every $n \in \N$.

  Writing $A^\mu_{s,t} = [A^{\mu,i,j}_{s,t}]_{i, j = 1, 2, 3} = \int_s^t (\mu_u - \mu_s) \otimes \d \mu_u$ for the canonical rough path lift of $\mu$, and using the monotonicity of trigonometric functions on the intervals $[0, \frac{\pi}{2}]$, $[\frac{\pi}{2}, \pi]$, $[\pi, \frac{3\pi}{2}]$ and $[\frac{3\pi}{2}, 2\pi]$, one can readily check that
  \begin{equation*}
    \|A^\mu\|_{\frac{p}{2},[0,2\pi n]} \sim A^{\mu,2,1}_{0,2\pi n} = \int_0^{2\pi n} \mu^2_t \dd \mu^1_t \sim \sum_{k=1}^n k^{-2\lambda}.
  \end{equation*}
  Recalling that $\xi_T = \|\mu\|_{p,[0,T]} + \|A^\mu\|_{\frac{p}{2},[0,T]}$ (since $[\mu] = 0$), and combining the calculations above, we deduce that, for $T = 2\pi n$,
  \begin{equation*}
    \frac{\log V^{\ast,K,\alpha}_T}{(1 + \|\mu\|_{p,[0,T]}^2) \xi_T} \geq \frac{\log V^{\pi^F}_{2\pi n}}{(1 + \|\mu\|_{p,[0,2\pi n]}^2) \xi_{2\pi n}} \gtrsim \frac{\sum_{k=1}^n k^{-2\lambda}}{1 + \sum_{k=1}^n k^{-2\lambda}} \, \longrightarrow \, 1 \qquad \text{as} \quad n \to \infty,
  \end{equation*}
  where we used the fact that $2\lambda < 1$.
\end{proof}

The example in Lemma~\ref{lemma: asymptotic growth rate is nontrivial} thus shows that for functionally controlled portfolios $\pi^F$ generated by a function $F \in C^{2+\alpha,K}$ which is not necessarily of gradient-type, the asymptotic growth rate $(1 + \|\mu\|_{p,[0,T]}^2) \xi_T$ appearing in Theorem~\ref{thm: Cover's theorem for generalized functionally generated portfolios} is actually sharp, in the sense that the log-relative wealth $\log V^{\pi^F}_T$ and the rate $(1 + \|\mu\|_{p,[0,T]}^2) \xi_T$ grow at the same rate (up to a multiplicative constant) as $T \to \infty$.

\subsection{Functionally controlled portfolios have better performance}

Let us conclude this section by showing that classical functionally generated portfolios of form in~\eqref{eq: canonical function generated portfolio}, which are induced by functions of gradient type, are in general not optimal among the class of functionally controlled portfolios of the form in~\eqref{eq:portfoliogen}.

Let $\mu$ be a continuous $\Delta^d_+$-valued path which, for simplicity, we assume to have finite variation on every bounded interval (and which therefore trivially satisfies Property~\textup{(RIE)}). For any $F \in C^{2+\alpha,K}(\overline{\Delta}^d_+;\R^d)$, we know, as we saw in \eqref{eq: a simple expression of log wealth} above, that for every $T > 0$,
\begin{equation*}
  \log V^{\pi^F}_T = \int_0^T \frac{\pi^F_s}{\mu_s} \dd \mu_s - \frac{1}{2} \sum_{i,j = 1}^d \int_0^T \frac{\pi^{F,i}_s \pi^{F,j}_s}{\mu^i_s \mu^j_s} \dd [\mu]^{ij}_s = \int_0^T F(\mu_s) \dd \mu_s,
\end{equation*}
since the quadratic variation $[\mu]$ vanishes. Suppose now that the generating function~$F$ were of gradient-type, so that $F = \nabla f$ for some suitably smooth real-valued function~$f$. We then have that
\begin{equation*}
  \log V^{\pi^F}_T = \int_0^T \nabla f(\mu_s) \dd \mu_s = f(\mu_T) - f(\mu_0),
\end{equation*}
which implies together with the mean value theorem that
\begin{equation*}
  |\log V^{\pi^F}_T| \leq \|\nabla f\|_\infty |\mu_T - \mu_0| = \|F\|_\infty |\mu_T - \mu_0| \leq 2K,
\end{equation*}
as $\|F\|_\infty \leq K$ and $\mu_T, \mu_0 \in \Delta^d_+$. In particular, we have that
\begin{equation}\label{eq:bounded wealth for gradients}
  \sup_{T \geq 0} \, \log V^{\pi^F}_T \leq 2K < \infty
\end{equation}
for every generating function~$F$ of gradient type.

\smallskip

Now let $\mu$ be the market portfolio given in Lemma~\ref{lemma: asymptotic growth rate is nontrivial}, and let $F(x_1,x_2,x_3) = (x_2,0,0)^\top$, which we note is \emph{not} of gradient type. In the proof of Lemma~\ref{lemma: asymptotic growth rate is nontrivial} we saw, for $T = 2\pi n$ with any $n \in \N$, that $\log V^{\pi^F}_T = \int_0^T \mu^2_t \dd \mu^1_t = \frac{\pi}{81} \sum_{k=1}^n k^{-2\lambda}$ for some positive $\lambda <\frac{1}{2}$. We thus immediately have that
\begin{equation}\label{eq:infinite asymp wealth for non-gradient}
  \limsup_{T \to \infty} \, \log V^{\pi^F}_T = \infty.
\end{equation}
Comparing \eqref{eq:infinite asymp wealth for non-gradient} with \eqref{eq:bounded wealth for gradients}, it is clear that the best retrospectively chosen portfolio over the set of functionally controlled portfolios cannot be of gradient type. Indeed, we infer that among the class of all functionally controlled portfolios, those corresponding to gradient-type generating functions are in general far from being optimal, demonstrating the need to go beyond gradient-type generating functions.

\section{Functionally controlled portfolios in probabilistic models}\label{sec:Functionally controlled protfolios in probabilistic models}

In this section we shall demonstrate some further links between our purely pathwise theory and classical stochastic portfolio theory in a probabilistic setting. In particular, this will allow us to illustrate again the advantages of functionally controlled portfolios, as introduced in Example~\ref{ex: functionally controlled portfolios}, compared to (pathwise) functionally generated portfolios (see Lemma~\ref{lem: functionally generated portfolios are in base set}), as were previously treated in \cite{Schied2018,Cuchiero2019} based on F{\"o}llmer integration.

\subsection{Probabilistic model for the market portfolio}

Whereas in the previous sections we worked in a purely pathwise setting, we now assume that the market portfolio (also known as the market weights process) $\mu = (\mu^1_t, \ldots, \mu_t^d)_{t \in [0,\infty)}$ is described by a time-homogeneous Markovian It{\^o}-diffusion with values in $\Delta_+^d$, of the form
\begin{equation}\label{eq:muP}
  \mu_t = \mu_0 + \int_0^t c(\mu_s) \lambda(\mu_s) \dd s + \int_0^t \sqrt{c(\mu_s)} \dd W_s, \qquad t \in [0,\infty),
\end{equation}
where $\mu_0$ is distributed according to some measure $\rho$ on $\Delta_+^{d}$, $W$ is a $d$-dimensional Brownian motion and $\sqrt{\hspace{1pt}\cdot\hspace{1pt}}$ denotes the matrix square root. We assume that $\mu$ is the canonical process defined on path space ($\Omega, \mathcal{F}, \mathbb{P})$, i.e.~$\Omega = C([0,\infty);\Delta^d_+)$, $\mathcal{F} = \sigma(\mu_t : t \in [0, \infty))$, and $\mathbb{P}$ denotes the law of $\mu$. For the moment $\lambda$ is just assumed to be a Borel measurable function from $\Delta_+^d$ to $\R^d$. Writing $\mathbb{S}_+^d$ for the set of positive semi-definite symmetric matrices, $c \in C(\overline{\Delta}_+^d;\mathbb{S}_+^d)$ is such that
\begin{equation*}
  c(x) \mathbf{1} = 0 \qquad \text{for all} \quad x \in \Delta_+^d.
\end{equation*}
The latter requirement is necessary to guarantee that the process~$\mu$ lies in $\Delta_+^d$. For a complete characterization of stochastic invariance of the closed simplex (under additional regularity conditions on the coefficients~$\lambda$ and~$c$) we refer to \cite[Theorem~2.3]{AbiJaber2017} and the references therein. To ensure that the process stays in the open simplex $\Delta_+^d$, conditions for non-attainment of the boundary are established for instance in \cite[Theorem~5.7]{Filipovic2016}. These conditions build on versions of what is sometimes called ``McKean’s argument'' (see \cite{Mayerhofer2011a} for an overview and further references).

We further suppose that the so-called \emph{structure condition} is satisfied, that is
\begin{equation}\label{eq: condition for NUPBR}
  \int_0^T \lambda^\top (\mu_s) c(\mu_s) \lambda(\mu_s) \dd s < \infty \quad \mathbb{P}\text{-a.s.}, \quad \text{for all} \quad T \in [0,\infty),
\end{equation}
which is equivalent to  ``no unbounded profit with bounded risk'' (NUPBR); see e.g.~Theorem~3.4 in \cite{Hulley2010}.

\begin{remark}
  As (NUPBR) is satisfied due to \eqref{eq: condition for NUPBR}, the sample paths of $\mu$ almost surely satisfy Property \textup{(RIE)} with respect to every $p \in (2,3)$ and a suitable sequence of partitions, cf.~Remark~\ref{remark: financial models satisfy RIE}.
\end{remark}

We further impose the following ergodicity assumption in the spirit of \cite[Section~2.2, Theorem~2.6 and Section~2.2.3, Theorem~2.8]{E:16}, along with an integrability condition on $\lambda$.

\begin{assumption}\label{ass:1}
  We assume that the market portfolio $\mu$, given by the dynamics in \eqref{eq:muP}, is an ergodic process with stationary measure $\rho$ on $\Delta_+^d$. That is, we suppose that $\rho p_t = \rho$ for every $t \in [0,\infty)$, where here $(p_t)_{t \in [0,\infty)}$ denotes the transition probability of $\mu$. Furthermore, we suppose that $\lambda \in L^2(\Delta^d_+,\rho;\mathbb{R}^d)$.
\end{assumption}

Note that the assumption that $\rho$ is a stationary measure implies that the shift semigroup $\Theta_t (\omega) = \omega(t + \cdot)$, $t \in [0, \infty)$, $\omega \in \Omega$, preserves the measure $\mathbb{P}$, in the sense that $\mathbb{P} \circ \Theta_t^{-1} = \mathbb{P}$. Hence, the ``ergodic theorem in continuous time'' (see \cite[Section~2.2, Theorem~2.6, Theorem~2.8]{E:16}) can be applied.

\smallskip

While on the pathwise market $\Omega_p$ the portfolios were given by $\mu$-controlled paths $(\pi,\pi') \in \mathcal{V}^q_{\mu}$ (recall Definition~\ref{def: controlled path}), in the present semimartingale setting we consider a portfolio $\pi$ to be an element of the set $\Pi$ of all predictable processes $\pi$ taking values in $\Delta^d$, such that the It{\^o} integral
\begin{equation*}
  \int_0^T \frac{\pi_s}{\mu_s} \dd \mu_s = \int_0^T \sum_{i=1}^d \frac{\pi^i_s}{\mu_s^i} \dd \mu_s^i
\end{equation*}
is well-defined for every $T \in [0,\infty)$. As established in \cite[Section~4.2.3]{Cuchiero2019}, for $\pi \in \Pi$, the relative wealth process (recall \eqref{eq:defn relative wealth process}) can be written in the usual form, that is
\begin{align}\label{eq:portfolio}
  V^\pi_T &= \exp \bigg(\int_0^T \frac{\pi_s}{\mu_s} \dd \mu_s - \frac{1}{2} \int_0^T \sum_{i,j=1}^d \frac{\pi^i_s \pi^j_s}{\mu_s^i \mu_s^j} c^{ij}(\mu_s) \dd s\bigg), \qquad T \in [0,\infty).
\end{align}

\begin{remark}
  Note that if $(\pi,\pi^\prime)$ is an adapted process with sample paths which are almost surely $\mu$-controlled paths, then it is predictable, and under Property \textup{(RIE)} the rough integral interpretation of $\int_0^T \frac{\pi_s}{\mu_s} \dd \mu_s$ coincides almost surely with the It{\^o} integral interpretation. Indeed, the rough integral can be approximated by left-point Riemann sums (see Theorem~\ref{thm: Ito integral for smooth transformed RIE path}), while the It{\^o} integral can be approximated by the same Riemann sums in probability (see e.g.~\cite[Theorem~II.21]{Protter2004}). Moreover, as established in Proposition~\ref{prop: log of relative wealth}, the identity in \eqref{eq:portfolio} holds even in a pathwise setting.
\end{remark}

\subsection{The log-optimal portfolio and equivalence of its asymptotic growth rate with Cover's universal and the best retrospectively chosen portfolio}\label{subsec:assgrowth}

The results in this section will illustrate that in the presence of an appropriate probabilistic structure the asymptotic growth rate can be significantly improved for scenarios outside a null set.

\smallskip

For a given $T > 0$, the \emph{log-optimal portfolio} $\widehat{\pi}$ is the maximizer of the optimization problem
\begin{equation}\label{eq:definition log-optimal portfolio}
  \sup_{\pi \in \Pi} \mathbb{E} [\log V^\pi_T].
\end{equation}
We write
\begin{equation*}
  \widehat{V}_T := V_T^{\widehat{\pi}}
\end{equation*}
for the corresponding wealth process. As shown in \cite[Section~4.2.3]{Cuchiero2019}, if $\mu$ satisfies the dynamics in \eqref{eq:muP}, then $\widehat{\pi} = (\widehat{\pi}^1, \dots, \widehat{\pi}^d)$ can be expressed as
\begin{align}\label{eq:logopt}
  \widehat{\pi}_t^i = \mu^i_{t} \bigg(\lambda^i(\mu_t) + 1 - \sum_{j=1}^d \mu^j_{t} \lambda^j(\mu_t)\bigg), \qquad t \in [0,\infty),
\end{align}
and, due to \eqref{eq:portfolio}, the expected value of the log-optimal portfolio satisfies
\begin{align}\label{eq:expectedlogopt}
  \mathbb{E} [\log \widehat{V}_T] = \sup_{\pi \in \Pi} \mathbb{E} [\log V^\pi_T] = \frac{1}{2} \mathbb{E} \bigg[\int_0^T \lambda^\top (\mu_s) c(\mu_s) \lambda(\mu_s) \dd s\bigg].
\end{align}
We suppose that the log-optimal portfolio has finite maximal expected utility and require thus additionally to \eqref{eq: condition for NUPBR} that 
\begin{equation*}
  \mathbb{E} \bigg[\int_0^T \lambda^\top (\mu_s) c(\mu_s) \lambda(\mu_s) \dd s\bigg]< \infty.
\end{equation*}
From the expression in \eqref{eq:logopt}, we see immediately that the log-optimal portfolio $\widehat{\pi}$ belongs to the class of functionally controlled portfolios, as defined in Example~\ref{ex: functionally controlled portfolios}, whenever $\lambda$ is sufficiently smooth. In general, however, it does \emph{not} belong to the smaller class of functionally generated portfolios, as we will see in Section~\ref{subsec:comp}.

\smallskip

In \eqref{eq:definition log-optimal portfolio}, the supremum is taken over \emph{all} predictable strategies in $\Pi$. However, since the optimizer is actually of the form in \eqref{eq:logopt}, we can also take the supremum in~\eqref{eq:definition log-optimal portfolio} over a smaller set. Indeed, it is sufficient to consider (functionally controlled) portfolios of the form
\begin{equation}\label{eq:genportLinfty}
  (\pi^F_t)^i = \mu_t^i \bigg(F^i(\mu_t) + 1 - \sum_{j=1}^d \mu_t^j F^j(\mu_t)\bigg),
\end{equation}
for functions $F$ in the space $L^2(\Delta_+^d, \rho; \R^d)$. 

Clearly, any portfolio $\pi^F$ of the form in \eqref{eq:genportLinfty} can itself be considered as a function $\pi^F \in L^2(\Delta_+^d, \rho;\R^d)$ which maps $x \mapsto \pi^F(x)$, where
\begin{equation}\label{eq:pi^F(x) defn}
  [\pi^F(x)]^i = x^i \bigg(F^i(x) + 1 - \sum_{j=1}^d x^j F^j(x)\bigg),
\end{equation}
with the corresponding portfolio then being given by $t \mapsto \pi^F(\mu_t)$.

In the current probabilistic setting we establish the following equivalence of the asymptotic growth rates of the log-optimal, best retrospectively chosen and the universal portfolio based on functionally controlled portfolios of the form in \eqref{eq:genportLinfty}, which can be viewed as a generalization of \cite[Theorem~4.12]{Cuchiero2019} for non-functionally generated portfolios.

\begin{theorem}\label{thm:complogopt}
  Let $\mu$ be a market weights process with the dynamics in \eqref{eq:muP}.
  \begin{enumerate}
    \item[(i)] Suppose that $\mu$ and $\lambda$ satisfy Assumption~\ref{ass:1}, and that $c \in C(\overline{\Delta}_+^d;\mathbb{S}^d_+)$. Let $m$ be a probability measure on $L^2(\Delta_+^d,\rho; \mathbb{R}^d)$ such that $\lambda \in \operatorname{supp}(m)$. Define the universal portfolio $\pi^{\nu}$ analogously to \eqref{eq: universal portfolio1} but with $\nu$ being the pushforward measure of $m$ under the mapping $F \, \mapsto \, \pi^F$ with $\pi^F$ as in \eqref{eq:pi^F(x) defn}, cf.~\cite[Section~4.2.2]{Cuchiero2019}. Suppose that there exists an integrable random variable $w$ such that, for each $T > 0$, the growth rate of the universal portfolio satisfies
    \begin{equation}\label{eq:technical}
      \frac{1}{T} \log V^{\pi^\nu}_T \geq -w.
    \end{equation}
    We then have that
    \begin{equation}\label{eq: asymptotic rate for log optimal and universal}
      \liminf_{T\to \infty} \frac{1}{T} \log V^{\pi^\nu}_T = \lim_{T \to \infty} \frac{1}{T} \log \widehat{V}_T = \widehat{L}, \qquad \mathbb{P}\text{-a.s.},
    \end{equation}
    where $\widehat{L} $ is given by
    \begin{equation*}
      \widehat{L} := \frac{1}{2} \int_{\Delta^d_+} \lambda^{\top}(x) c(x) \lambda(x) \, \rho(\d x).
    \end{equation*}
    \item[(ii)] Suppose that
    \begin{equation}\label{eq: smooth conditions}
      \lambda \in C^{3}_b(\overline{\Delta}_+^d;\R^d), \quad \text{and} \quad \sqrt{c} \in C^{3}_b(\overline{\Delta}_+^d;\mathbb{S}_+^d).
    \end{equation}
    With the same notation as in Section~\ref{subsection universal portfolios functionally controlled}, let $m$ be a probability measure on $C^{2+\alpha,K}$ with full support, and let $\nu = \Phi_\ast m$ be the pushforward measure on $\mathcal{F}^{2+\alpha,K}$ of $m$ under the map $\Phi$ given in Lemma~\ref{lem: continuity embedding of function spaces}. Let $\pi^\nu$ be the universal portfolio as defined in \eqref{eq: universal portfolio1}, and let $V^{\ast,K,\alpha}$ be the wealth process of the best retrospectively chosen portfolio, as in \eqref{eq: best retrospective portfolio1}. We then have that
    \begin{equation}\label{eq: asymptotic rate for universal and best, stochastic}
      \lim_{T \to \infty} \frac{1}{T} \Big(\log V^{\ast,K,\alpha}_T - \log V^{\pi^\nu}_T\Big) = 0, \qquad \mathbb{P}\text{-a.s.}
    \end{equation}
    \item[(iii)] Suppose that $\mu, \lambda$ and $c$ satisfy both Assumption~\ref{ass:1} and \eqref{eq: smooth conditions}, and that $K > 0$ is sufficiently large to ensure that $\lambda \in C^{2+\alpha,K}$. Let $m, \nu, \pi^\nu$ and $V^{\ast,K,\alpha}$ be as in part~(ii) above. Then
    \begin{equation}\label{eq: equivalence for three convergence rates}
      \liminf_{T \to \infty} \frac{1}{T} \log V^{\pi^\nu}_T = \liminf_{T \to \infty} \frac{1}{T} \log V^{\ast,K,\alpha}_T = \lim_{T \to \infty} \frac{1}{T} \log \widehat{V}_T = \widehat{L}, \qquad \mathbb{P}\text{-a.s.}
    \end{equation}
  \end{enumerate}
\end{theorem}

\begin{remark}
Note that the assumption of ergodicity in Assumption~\ref{ass:1} is only needed for assertions (i) and (iii). The equivalence of the asymptotic growth rates of the best retrospectively chosen and  Cover's universal portfolio, as established in part (ii), holds for all Brownian driven SDEs with sufficiently smooth coefficients.
\end{remark}

As preparation for the proof of Theorem~\ref{thm:complogopt}, we need the following technical lemma, which is an adaptation of \cite[Lemma~3.1]{Hubalek2002}.

\begin{lemma}\label{lem:universaldomination}
  Let $(f_n)_{n \in \mathbb{N}}$ be a sequence of non-negative measurable functions on some topological space $\mathcal{A}$, such that the map $a \mapsto \liminf_{n \to \infty} f_n(a)$ is continuous at some point $\widehat{a} \in \mathcal{A}$. Let $\nu$ be a probability measure on $\mathcal{A}$ with $\widehat{a} \in \operatorname{supp}(\nu)$. Then
  \begin{equation*}
    \liminf_{n \to \infty} f_n(\widehat{a}) \leq \liminf_{n \to \infty} \bigg(\int_{\mathcal{A}} f_n^n(a) \, \nu(\d a)\bigg)^{\frac{1}{n}}.
  \end{equation*}
\end{lemma}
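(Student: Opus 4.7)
My plan is to reduce to the case $L := \liminf_{n \to \infty} f_n(\widehat{a}) > 0$ (if $L = 0$, the claim is trivial since the right-hand side is non-negative) and then exploit the continuity of $a \mapsto \liminf_n f_n(a)$ at $\widehat a$ together with the fact that $\widehat{a} \in \operatorname{supp}(\nu)$ to produce a neighborhood of $\widehat a$ on which the values $f_n(a)$ eventually dominate $L-2\varepsilon$ for any fixed $\varepsilon \in (0, L/2)$.

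First I would fix $\varepsilon \in (0, L/2)$ and invoke continuity of the liminf at $\widehat a$ to obtain an open neighborhood $U$ of $\widehat a$ such that $\liminf_{n \to \infty} f_n(a) \geq L - \varepsilon$ for every $a \in U$. Since $\widehat a \in \operatorname{supp}(\nu)$, this yields $\nu(U) > 0$. Then I would introduce the measurable sets
\[
B_n := \{a \in U : f_n(a) \geq L - 2\varepsilon\}, \qquad n \in \N.
\]
By the defining property of the liminf, for every $a \in U$ there exists $N(a)$ such that $f_n(a) \geq L - 2\varepsilon$ for all $n \geq N(a)$; equivalently, $\1_{B_n}(a) \to 1$ pointwise on $U$. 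Dominated convergence (with dominating function $\1_U$) therefore gives $\lim_{n \to \infty} \nu(B_n) = \nu(U) > 0$.

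The decisive estimate is then the simple pointwise bound $f_n^n(a) \geq (L - 2\varepsilon)^n \1_{B_n}(a)$ on $U$, which, after integrating against $\nu$, yields
\[
\int_{\mathcal{A}} f_n^n(a) \, \nu(\d a) \geq (L - 2\varepsilon)^n \, \nu(B_n).
\]
Taking $n$-th roots and passing to the $\liminf$, and using that $\nu(B_n)^{1/n} \to 1$ (because $\nu(B_n)$ converges to a strictly positive limit, so $\log \nu(B_n)$ stays bounded), we obtain
\[
\liminf_{n \to \infty} \Big(\int_{\mathcal{A}} f_n^n(a) \, \nu(\d a)\Big)^{\!1/n} \geq (L - 2\varepsilon) \liminf_{n \to \infty} \nu(B_n)^{1/n} = L - 2\varepsilon.
\]
Letting $\varepsilon \downarrow 0$ completes the argument.

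The argument is essentially elementary; the only place requiring care is the justification that $\nu(B_n)^{1/n} \to 1$, which is where both the positivity of $\nu(U)$ (hence the hypothesis $\widehat a \in \operatorname{supp}(\nu)$) and the continuity assumption at $\widehat a$ come together. Everything else is a direct chain of measure-theoretic inequalities, so this is the conceptual crux of the proof.
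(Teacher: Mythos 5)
Your proof is correct, and it follows a genuinely different route from the paper's. The paper argues by taking an arbitrary bounded, non-negative test function $g$ with $\int_{\mathcal{A}} g \,\d\nu = 1$, applying Fatou's lemma and then H\"older's inequality to bound $\int_{\mathcal{A}} (\liminf_n f_n)\, g \,\d\nu$ by $\liminf_n \big(\int f_n^n \,\d\nu\big)^{1/n}$, using that $\big(\int g^{n/(n-1)} \,\d\nu\big)^{(n-1)/n} \to 1$; it then concludes by choosing $g$ concentrated near $\widehat{a}$ (which implicitly uses the support condition and continuity, exactly the ingredients you deploy directly). You instead make the neighborhood $U$ explicit from the start, cut it down to the level sets $B_n = \{a \in U : f_n(a) \geq L - 2\varepsilon\}$, and use the elementary lower bound $\int f_n^n \,\d\nu \geq (L - 2\varepsilon)^n \nu(B_n)$ together with dominated convergence to get $\nu(B_n)^{1/n} \to 1$. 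Your argument avoids H\"older's inequality entirely and makes the role of the support and continuity hypotheses transparent; the paper's version is shorter and more in the style of a textbook duality argument. One small point worth flagging in a fully written version: if $\liminf_n f_n(\widehat{a}) = \infty$, your phrasing ``$\varepsilon \in (0, L/2)$'' and ``$\liminf_n f_n(a) \geq L - \varepsilon$'' should be replaced by the obvious variant with an arbitrary threshold $M$, but the mechanism is unchanged.
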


\begin{proof}
  Let $g \geq 0$ be bounded measurable function such that $\int_{\mathcal{A}} g(a) \, \nu(\d a) = 1$. By Fatou's lemma and H{\"o}lder's inequality,
  \begin{align*}
    \int_{\mathcal{A}} &\liminf_{n \to \infty} f_n(a) g(a) \, \nu(\d a) \leq \liminf_{n \to \infty} \int_{\mathcal{A}} f_n(a) g(a) \, \nu(\d a)\\
    &\leq \liminf_{n \to \infty} \bigg(\int_{\mathcal{A}} f_n^n(a) \, \nu(\d a)\bigg)^{\frac{1}{n}} \bigg(\int_{\mathcal{A}} g^{\frac{n}{n-1}}(a) \, \nu(\d a)\bigg)^{\frac{n-1}{n}} = \liminf_{n \to \infty} \bigg(\int_{\mathcal{A}} f_n^n(a) \, \nu(\d a)\bigg)^{\frac{1}{n}},
  \end{align*}
  where the last equality follows from the fact that $\lim_{n \to \infty} \int_{\mathcal{A}} g^{\frac{n}{n-1}} \, \nu(\d a) = \int_{\mathcal{A}} g(a) \, \nu(\d a)$ by the dominated convergence theorem. Since $g$ was arbitrary, $\widehat{a}$ lies in the support of $\nu$, and $\liminf_{n \to \infty} f_n$ is continuous at $\widehat{a}$, we deduce the result.
\end{proof}

\begin{proof}[Proof of Theorem~\ref{thm:complogopt}]
  \emph{Part~(i):}
  By the conditions on $\lambda$ and $c$, and the fact that we consider portfolios of the form in \eqref{eq:genportLinfty} with $F \in L^2(\Delta^d_+, \rho; \mathbb{R}^d)$, we see that the assumptions of \cite[Theorem~4.9]{Cuchiero2019} are satisfied. Thus, for each $F \in L^2(\Delta^d_+, \rho; \mathbb{R}^d)$, we have that
  \begin{equation}\label{eq:log wealth converges}
    \lim_{T \to \infty} \frac{1}{T} \log V^{\pi^F}_T = L^{\pi^F}, \quad \mathbb{P}\text{-a.s.},
  \end{equation}
  where
  \begin{equation*}
    L^{\pi^F} := \int_{\Delta^d_+} \bigg(\frac{\pi^F(x)}{x}\bigg)^{\hspace{-2pt}\top} c(x) \lambda(x) \, \rho(\d x) - \frac{1}{2} \int_{\Delta^d_+} \bigg(\frac{\pi^F(x)}{x}\bigg)^{\hspace{-2pt}\top} c(x) \bigg(\frac{\pi^F(x)}{x}\bigg) \, \rho(\d x).
  \end{equation*}
  Taking the supremum over $F \in L^2(\Delta^d_+, \rho; \mathbb{R}^d)$, we find that
  \begin{equation*}
    \sup_{F \in L^2(\Delta^d_+, \rho; \mathbb{R}^d)} L^{\pi^F} = L^{\pi^\lambda} = \widehat{L}.
  \end{equation*}
  Recalling~\eqref{eq:logopt} and \eqref{eq:log wealth converges}, it follows that, $\mathbb{P}$-a.s.,
  \begin{equation}\label{eq:log V hat conv L hat}
    \lim_{T \to \infty} \frac{1}{T} \log \widehat{V}_T = \lim_{T \to \infty} \frac{1}{T} \log V^{\pi^\lambda}_T = L^{\pi^\lambda} = \widehat{L}.
  \end{equation}
  Note that the map
  \begin{equation*}
    F \, \mapsto \, \exp(L^{\pi^{F}}) = \lim_{T \to \infty} \Big(V^{\pi^F}_T\Big)^{\frac{1}{T}}
  \end{equation*}
  is continuous with respect to the $L^2(\Delta^d_+, \rho; \mathbb{R}^d)$-norm. Thus, applying Lemma~\ref{lem:universaldomination} with $f_T(F) = (V^{\pi^F}_T)^{\frac{1}{T}}$, and recalling Lemma~\ref{lemma wealth of universal portfolio}, we deduce that
  \begin{equation}\label{eq:pathwiselim1}
    \lim_{T \to \infty} \frac{1}{T} \log V^{\pi^\lambda}_T \leq \liminf_{T \to \infty} \frac{1}{T} \log V^{\pi^\nu}_T, \quad \mathbb{P}\text{-a.s.}
  \end{equation}
  On the other hand, by the definition of the log-optimal portfolio,
  \begin{equation}\label{eq:expectdom1}
    \mathbb{E} [\log V^{\pi^\nu}_T] \leq \mathbb{E} [\log \widehat{V}_T].
  \end{equation}
  By \eqref{eq:expectedlogopt} and the ergodicity of the process $\mu$, we have that
  \begin{equation}\label{eq:expected log-opt wealth converges}
    \lim_{T \to \infty} \frac{1}{T} \mathbb{E} [\log \widehat{V}_T] = \widehat{L}.
  \end{equation}
  By Fatou's lemma (which we may apply by the condition in \eqref{eq:technical}), \eqref{eq:expectdom1}, \eqref{eq:expected log-opt wealth converges}, \eqref{eq:log V hat conv L hat} and \eqref{eq:pathwiselim1}, we then have that, $\mathbb{P}$-a.s.,
  \begin{align*}
    \mathbb{E} \bigg[\liminf_{T \to \infty} \frac{1}{T} \log V^{\pi^\nu}_T\bigg] &\leq \liminf_{T \to \infty} \frac{1}{T} \mathbb{E} [\log V^{\pi^\nu}_T] \leq \liminf_{T \to \infty} \frac{1}{T} \mathbb{E} [\log \widehat{V}_T]\\
    &= \widehat{L} = \lim_{T \to \infty} \frac{1}{T} \log \widehat{V}_T \leq \liminf_{T \to \infty} \frac{1}{T} \log V^{\pi^\nu}_T,
  \end{align*}
  from which the result \eqref{eq: asymptotic rate for log optimal and universal} follows.

  \smallskip

  \emph{Part~(ii):}
  The process $\mu$ is assumed to satisfy the It\^o SDE \eqref{eq:muP}, but since the vector fields $\lambda(\cdot) c(\cdot)$ and $\sqrt{c}(\cdot)$ are in $C^3$ with bounded derivatives, $\mu$ also coincides almost surely with the unique solution of the rough differential equation
  \begin{equation*}
    \mu_t = \mu_0 + \int_0^t c(\mu_s) \lambda(\mu_s) \dd s + \int_0^t \sqrt{c(\mu_s)} \dd \bW_s,
  \end{equation*}
  driven by the standard It\^o-rough path lift $\bW = (W,\W)$ of $W$ (see e.g.~\cite{Friz2020}). By standard rough path estimates (see e.g.~\cite[(11.10)]{Friz2020}), for each $k \in \N$, we may deduce an estimate of the form
  \begin{equation*}
    \|\mu\|_{p,[k,k+1]} \lesssim 1 + (\|\bW\|_{p,[k,k+1]} \vee \|\bW\|_{p,[k,k+1]}^p),
  \end{equation*}
  where $\|\bW\|_{p,[k,k+1]} := \|W\|_{{p,[k,k+1]}} + \|\W\|_{{\frac{p}{2},[k,k+1]}}^{\frac{1}{2}}$, and the implied multiplicative constant is independent of $k$ and $T$. Using the bound in \eqref{eq:est int of controlled paths}, a similar estimate can be inferred for the rough path lift $A^\mu$ of $\mu$, defined as in \eqref{eq:canonical lift of Z}. Writing $\text{tr}(\cdot)$ for the trace operator, it also follows from Lemma~\ref{lem: Ito isometry for rough paths} and the boundedness of $c$ that
  \begin{equation*}
    \sum_{i=1}^d [\mu]^{ii}_{k,k+1} = \text{tr} \bigg(\int_k^{k+1} c(\mu_t)  \dd [\bW]_t\bigg) = \int_k^{k+1} \text{tr} (c(\mu_t)) \dd t \lesssim 1,
  \end{equation*}
  where we used that $[\bW]_t = t I_d$ as shown e.g.~in \cite[Example 5.9]{Friz2020}.
  We therefore deduce the existence of a polynomial $g$ such that
  \begin{equation}\label{eq:clock bounded by polynomial}
    (1 + \|\mu\|_{p,[k,k+1]}^2) \xi_{k,k+1} \leq g(\|\bW\|_{p,[k,k+1]})
  \end{equation}
  for every $k \in \N$, with $\xi_{k,k+1}$ defined as in Theorem~\ref{thm: Cover's theorem for generalized functionally generated portfolios}.

  Since Brownian motion is a L\'evy process, the random variables $g(\|\bW\|_{p,[k,k+1]})$, $k \in \N$, are independent and identically distributed. Moreover, by the enhanced Burkholder--Davis--Gundy inequality\footnote{Strictly speaking, the enhanced BDG inequality was proved for geometric rough paths constructed via Stratonovich integration. However, since $[\bW]_t = t I_d$, it is easy to see that it also holds for the It\^o lift $\bW$.} (see \cite[Theorem~14.12]{Friz2010}) applied to each of the monomials comprising $g$, we have that $\E[g(\|\bW\|_{p,[0,1]})] < \infty$. Thus, by the strong law of large numbers, we have that, almost surely,
  \begin{equation}\label{eq:SLLN convergence of sum of poly}
    \frac{1}{T} \sum_{k=0}^{\lceil T \rceil - 1} g(\|\bW\|_{p,[k,k+1]}) \, \longrightarrow \, \E[g(\|\bW\|_{p,[0,1]})] \qquad \text{as} \quad T \, \longrightarrow \, \infty.
  \end{equation}
  From \eqref{eq:clock bounded by polynomial}, \eqref{eq:SLLN convergence of sum of poly} and the result of part~(ii) of Theorem~\ref{thm: Cover's theorem for generalized functionally generated portfolios}, we then deduce that, almost surely,
  \begin{align*}
    &\limsup_{T \to \infty} \frac{1}{T} \Big(\log V^{\ast,K,\alpha}_T - \log V^{\pi^\nu}_T\Big)\\
    &\leq \limsup_{T \to \infty} \frac{\sum_{k=0}^{\lceil T \rceil - 1} g(\|\bW\|_{p,[k,k+1]})}{T} \cdot \frac{\log V^{\ast,K,\alpha}_T - \log V^{\pi^\nu}_T}{\sum_{k=0}^{\lceil T \rceil - 1} (1 + \|\mu\|_{p,[k,k+1]}^2) \xi_{k,k+1}} = 0,
  \end{align*}
  which immediately implies \eqref{eq: asymptotic rate for universal and best, stochastic}.

  \smallskip

  \emph{Part~(iii):}
  We have from part~(ii) that \eqref{eq: asymptotic rate for universal and best, stochastic} holds. It is straightforward to check that the result of part~(i) also holds when we restrict to portfolios generated by functions $F \in C^{2+\alpha,K}$. Thus, it suffices to verify the technical condition in \eqref{eq:technical}, since then part~(i) implies that \eqref{eq: asymptotic rate for log optimal and universal} holds, which, combined with \eqref{eq: asymptotic rate for universal and best, stochastic}, gives \eqref{eq: equivalence for three convergence rates}.

  To this end, we first note that, similarly to the proof of part~(ii) above, we may deduce that there exists a polynomial $g$ such that, for any $F \in C^{2+\alpha,K}$,
  \begin{equation*}
    |\log V^{\pi^F}_T| \leq \|F\|_{C^2} \sum_{k=0}^{\lceil T \rceil - 1} g(\|\bW\|_{p,[k,k+1]})
  \end{equation*}
  for all $T > 0$. In particular, we have that
  \begin{equation*}
    \log V^{\pi^F}_T \geq - K \sum_{k=0}^{\lceil T \rceil - 1} g(\|\bW\|_{p,[k,k+1]}).
  \end{equation*}
  Since, by Lemma~\ref{lemma wealth of universal portfolio}, $V^{\pi^\nu}_T = \int_{C^{2+\alpha,K}} V^{\pi^F}_T \dd m(F)$, and using Jensen's inequality, we then have
  \begin{equation*}
    \frac{1}{T} \log V^{\pi^\nu}_T \geq \frac{1}{T} \int_{C^{2+\alpha,K}} \log V^{\pi^F}_T \dd m(F) \geq -\frac{K}{T} \sum_{k=0}^{\lceil T \rceil - 1} g(\|\bW\|_{p,[k,k+1]}),
  \end{equation*}
and, again by the strong law of large numbers, \eqref{eq:SLLN convergence of sum of poly} holds almost surely. It is also straightforward to verify that
  \begin{equation*}
    \bigg(\frac{1}{T} \sum_{k=0}^{\lceil T \rceil - 1} g(\|\bW\|_{p,[k,k+1]})\bigg)^2 \leq \frac{\lceil T \rceil}{T^2} \sum_{k=0}^{\lceil T \rceil - 1} g(\|\bW\|_{p,[k,k+1]})^2,
  \end{equation*}
  so that, for all $T > 1$,
  \begin{equation*}
    \E\bigg[\bigg(\frac{1}{T} \sum_{k=0}^{\lceil T \rceil - 1} g(\|\bW\|_{p,[k,k+1]})\bigg)^2\bigg] \leq \frac{\lceil T \rceil^2}{T^2} \E[g(\|\bW\|_{p,[0,1]})^2] \leq 4\E[g(\|\bW\|_{p,[0,1]})^2] < \infty.
  \end{equation*}
  We deduce that the family $\frac{1}{T} \sum_{k=0}^{\lceil T \rceil - 1} g(\|\bW\|_{p,[k,k+1]})$ for $T > 1$ is bounded in $L^2(\Omega,\mathbb{P})$, and therefore uniformly integrable. Thus, $\frac{1}{T} \sum_{k=0}^{\lceil T \rceil - 1} g(\|\bW\|_{p,[k,k+1]}) \to \E[g(\|\bW\|_{p,[0,1]})]$ as $T \to \infty$ both almost surely and in $L^1(\Omega,\mathbb{P})$. It follows that
  \begin{equation*}
    \frac{1}{T} \log V^{\pi^\nu}_T \geq -w_T,
  \end{equation*}
  for some random variables $w_T$, $T > 0$, which converge as $T \to \infty$ to an integrable random variable $w$ almost surely and in $L^1(\Omega,\mathbb{P})$. Although weaker than the condition in \eqref{eq:technical}, it is straightforward to verify that this condition suffices, as it is sufficient for the application of Fatou's lemma in the proof of part~(i).
\end{proof}


\subsection{Comparison of functionally controlled and functionally generated portfolios}\label{subsec:comp}

Recall that, as we observed from the expression in \eqref{eq:logopt}, the log-optimal portfolio $\widehat{\pi}$ belongs to the class of functionally controlled portfolios, provided that the drift characteristic $\lambda$---as introduced in the model \eqref{eq:muP}---is sufficiently smooth. In fact, the log-optimal portfolio $\widehat{\pi}$ is known to be even a (classical) functionally generated portfolio if $\lambda$ can be written in the gradient form
\begin{equation*}
  \lambda(x) = \nabla \log G(x) = \frac{\nabla G(x)}{G(x)}, \qquad x \in \Delta_+^d,
\end{equation*}
for some differentiable function $G \colon \Delta^d_+ \to \mathbb{R}_+$; see \cite[Proposition~4.7]{Cuchiero2019}.

\smallskip

Considering again the stochastic model in \eqref{eq:muP}, we shall show in this section that the log-optimal portfolio may genuinely \emph{not} be a functionally generated portfolio, but still a functionally controlled one, in cases when $\lambda$ is not of the above gradient type. We will then illustrate numerically that the  difference between the true log-optimal portfolio and an approximate ``best'' portfolio based on a class of gradient type trading strategies can be substantial. This demonstrates that such extensions beyond classical functionally generated portfolios are crucial.

\smallskip

Let us consider a so-called volatility stabilized market model of the form in \eqref{eq:muP}, where, for some $\gamma > 0$, the diffusion matrix is given by
\begin{equation*}
  c^{ij}(\mu) := \gamma \mu^i (\delta_{ij} - \mu^j), \qquad i, j = 1, \ldots, d,
\end{equation*}
where $\delta_{ij}$ is the Kronecker delta, and the drift is given by
\begin{equation*}
  c(\mu) \lambda(\mu) = B \mu,
\end{equation*}
where $B \in \mathbb{R}^{d \times d}$ is defined by $B^{ij} := \frac{1 + \alpha}{2} (1 - \delta_{ij} d)$ for some $\alpha > \gamma - 1$. In the context of stochastic portfolio theory these models were first considered in \cite{Fernholz2005}. The condition $ \alpha > \gamma -1$ assures non-attainment of the boundary, as proved in \cite[Proposition~5.7]{Cuchiero2019a}, i.e.~the process $\mu$ takes values in $\Delta^d_+$.

We can solve this linear system for $\lambda$, and find as general solution
\begin{equation*}
  \lambda^i(\mu) = \frac{1+\alpha}{2 \gamma \mu^i} + C, \qquad i = 1, \ldots, d,
\end{equation*}
for an arbitrary $C \in \mathbb{R}$. Note that this is well-defined as $\mu$ always stays within the interior of the unit simplex $\Delta_+^d$ due to the condition $\alpha > \gamma - 1$. We now define the function $f^{\alpha} \colon \mathbb{R}^d_+ \to \mathbb{R}$ by
\begin{align}\label{eq:funcgen}
  f^{\alpha}(x):= \frac{1 + \alpha}{2 \gamma} \sum_{i=1}^d \log(x^i) + C \sum_{i=1}^d x^i.
\end{align}
Then $\partial_i f^{\alpha}(x) = (1 + \alpha)/(2 \gamma x^i) + C$ for $i = 1, \ldots, d$, so that
\begin{equation*}
  \lambda(x) = \nabla f^{\alpha}(x) = \nabla \log G(x), \qquad x \in \Delta_+^d,
\end{equation*}
where $G(x) := \exp (f^\alpha(x))$. Hence, in this volatility stabilized model the log-optimal portfolio $\widehat{\pi}$ can be realized as a functionally generated portfolio. It follows from \eqref{eq:expectedlogopt} that
\begin{equation*}
  \sup_{\pi \in \Pi} \mathbb{E} [\log V^\pi_T] = \frac{(1 + \alpha)^2}{8 \gamma} \bigg(\mathbb{E} \bigg[\int_0^T \sum_{i=1}^d \frac{1}{\mu^i_s} \dd s\bigg] - d^2 T\bigg).
\end{equation*}

A generalization of this model is a polynomial model with the same diffusion matrix (for some fixed $\gamma$), but a more general drift matrix $B$ just satisfying $B^{jj}=-\sum_{i \neq j} B^{ij}$ and $B^{ij} \geq 0$ for $i \neq j$ (see \cite[Definition~4.9]{Cuchiero2019a}). In this case $\lambda$ is in general no longer of gradient type. To see this, let $d = 3$, and
\begin{align}\label{eq:genB}
  B = \begin{pmatrix}
  -p & q  & r\\
  p & -q & 0\\
  0 & 0 & -r
  \end{pmatrix}
\end{align}
for $p, q , r >0$ such that $2 \min(p,q,r)- \gamma \geq 0$, where the latter condition is imposed to guarantee non-attainment of the boundary (see \cite[Propostion~5.7]{Cuchiero2019a}). We refer also to \cite[Theorem~5.1]{Cuchiero2019a} for the relation to (NUPBR) and relative arbitrages.

The solution $\lambda$ of $c(x) \lambda(x) = B x$ is now found to be
\begin{align*}
  \lambda^1(x) &= \frac{1}{\gamma} \bigg(r - p + q \frac{x^2}{x^1} + r \frac{x^3}{x^1}\bigg) + C,\\
  \lambda^2(x) &= \frac{1}{\gamma} \bigg(r - q + p \frac{x^1}{x^2}\bigg) + C,\\
  \lambda^3(x) &= C,
\end{align*}
which cannot be realized as a gradient, for instance since $\frac{\partial \lambda^3}{\partial x^1} \neq \frac{\partial \lambda^1}{\partial x^3}$.

Let us now compare the log-optimal portfolio
\begin{equation*}
  (\widehat{\pi}_t)^i = \mu^i_t \bigg(\lambda^i(\mu_t) + 1 - \sum_{j=1}^d \mu^j_t \lambda^j(\mu_t)\bigg)
\end{equation*}
with the functionally generated portfolio
\begin{equation*}
  (\pi_t^{\alpha}) = \mu^i_t \bigg(\partial_i f^{\alpha}(\mu_t) + 1 - \sum_{j=1}^d \mu^j_t \partial_j f^{\alpha}(\mu_t)\bigg),
\end{equation*}
with $f^{\alpha}$ as defined in \eqref{eq:funcgen}. We seek the value of $\alpha$ which optimizes
\begin{equation*}
  \sup_\alpha \mathbb{E} [\log V^{\pi^\alpha}_T].
\end{equation*}
By \eqref{eq:muP} and \eqref{eq:portfolio}, we have that
\begin{align*}
  \mathbb{E} &[\log V^{\pi^\alpha}_T] = \mathbb{E} \bigg[\int_0^T \nabla^\top \hspace{-1.5pt} f^{\alpha}(\mu_s) B \mu_s \dd s - \frac{1}{2} \int_0^T \nabla^\top \hspace{-1.5pt} f^{\alpha}(\mu_s) c(\mu_s) \nabla f^{\alpha}(\mu_s) \dd s\bigg]\\
  &= \frac{1+\alpha}{2 \gamma} \mathbb{E} \bigg[\int_0^T \bigg(\frac{1}{\mu^1_s}, \ldots, \frac{1}{\mu^d_s}\bigg) B \mu_s \dd s\bigg] - \frac{(1+\alpha)^2}{8 \gamma} \bigg(\mathbb{E} \bigg[\int_0^T \sum_{i=1}^d \frac{1}{\mu^i_s} \dd s\bigg] - d^2 T\bigg).
\end{align*}
Since this expression is concave in $\alpha$, we find the optimizer $\alpha^\ast$ to be given by
\begin{equation*}
  \alpha^\ast = \frac{2 \mathbb{E} \Big[\int_0^T \Big(\frac{1}{\mu^1_s}, \ldots, \frac{1}{\mu^d_s}\Big) B \mu_s \dd s\Big]}{\mathbb{E} \Big[\int_0^T \sum_{i=1}^d \frac{1}{\mu^i_s} \dd s\Big] - d^2 T} - 1.
\end{equation*}
Note that if $B$ is the drift matrix of a volatility stabilized market model with parameter $\alpha$, the right-hand side yields exactly $\alpha$, and we find the correct log-optimal portfolio. However, when we take $\pi^{\alpha^\ast}$ as an approximate portfolio, for instance in the case of $B$ being of the form \eqref{eq:genB}, this leads to Figure~\ref{fig1}. There, with the parameters $p = 0.15$, $q = 0.3$, $r = 0.2$, the functions $t \mapsto \mathbb{E} [\log \widehat{V}_t]$ (blue) and $t \mapsto \mathbb{E} [\log V^{\pi^{\alpha^\ast}}_t]$ (orange) are plotted, where the expected value is computed via a Monte Carlo simulation. This shows a significantly better performance of the log-optimal portfolio and, thus, illustrates a clear benefit from going beyond functionally generated portfolios in stochastic portfolio theory.

\appendix

\section{On the rough path foundation}\label{sec:rough integration}

In this appendix we collect some results regarding rough integration, including its associativity and a Fubini type theorem. While such elementary results are well-known for stochastic It{\^o} integration and other classical theories of integration, the presented results seem to be novel in the context of rough path theory and are essential for the model-free portfolio theory developed in the previous sections.

\smallskip

Throughout this section we will consider a general $p$-rough path $\bX = (X,\X)$---that is, we will \emph{not} impose Property (RIE)---and, as usual, we will assume that $p, q$ and $r$ satisfy Assumption~\ref{ass: parameters}, so that in particular $1 < p/2 \leq r < p \leq q < \infty$.

\subsection{Products of controlled paths}

As a first step towards the associativity of rough integration, we show that the product of two controlled paths is again a controlled path; see \cite[Corollary~7.4]{Friz2020} for a similar result in a H{\"o}lder-rough path setting.

\begin{lemma}\label{lem: product of controlled rough paths}
  Let $X \in C^{p\textup{-var}}([0,T];\R^d)$. The product operator $\Pi$, given by
  \begin{align*}
    \crpXq([0,T];\R^d) \times \crpXq([0,T];\R^d) &\rightarrow \crpXq([0,T];\R^d),\\
    ((F,F'), (G,G')) &\mapsto (FG,(FG)'),
  \end{align*}
where $(FG)^i := F^i G^i$ and $((FG)')^{ij} := (F')^{ij} G^i + F^i (G')^{ij}$ for every $1 \leq i, j \leq d$, is a continuous bilinear map, and comes with the estimate
  \begin{equation}\label{eq:B continuous bilinear est}
    \|(F,F')(G,G')\|_{\crpXq} \leq C(1 + \|X\|_{p})^2 \|F,F'\|_{\crpXq} \|G,G'\|_{\crpXq},
  \end{equation}
  where the constant $C$ depends on $p, q, r$ and the dimension $d$. We call $\Pi((F,F'),(G,G'))$ the product of $(F,F')$ and $(G,G')$, which we sometimes denote simply by $FG$.
\end{lemma}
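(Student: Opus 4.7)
The plan is to verify the controlled path relation for $(FG,(FG)')$ directly from the defining decompositions of $F$ and $G$, and then to estimate each of the four summands in the norm $\|FG,(FG)'\|_{\crpXq}$. First, expanding
\begin{align*}
(FG)^i_{s,t} &= F^i_s G^i_{s,t} + F^i_{s,t} G^i_s + F^i_{s,t} G^i_{s,t} \\
&= F^i_s \bigl((G')^{ij}_s X^j_{s,t} + (R^G)^i_{s,t}\bigr) + \bigl((F')^{ij}_s X^j_{s,t} + (R^F)^i_{s,t}\bigr) G^i_s + F^i_{s,t} G^i_{s,t}
\end{align*}
identifies the remainder as $(R^{FG})^i_{s,t} = F^i_s (R^G)^i_{s,t} + (R^F)^i_{s,t} G^i_s + F^i_{s,t} G^i_{s,t}$, with Gubinelli derivative exactly $((FG)')^{ij}$ as in the statement.

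For the subsequent bounds, I would rely on two auxiliary inequalities following from $r \leq p \leq q$: first, the standard $p$-variation embedding yields $\|Y\|_q \leq \|Y\|_p$ for single-variable paths and $\|Z\|_p \leq \|Z\|_r$ for two-parameter functions; second,
\begin{equation*}
\|F\|_\infty + \|F\|_{p} \lesssim (1 + \|X\|_{p}) \|F,F'\|_{\crpXq},
\end{equation*}
which follows from $\|F\|_{p} \leq \|F'\|_\infty \|X\|_{p} + \|R^F\|_{p} \leq \|F'\|_\infty \|X\|_{p} + \|R^F\|_r$, combined with $\|F\|_\infty \leq |F_0| + \|F\|_{p}$ and $\|F'\|_\infty \leq |F'_0| + \|F'\|_q$. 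Analogous bounds hold for $G$. Applying the classical Leibniz estimate $\|f g\|_q \lesssim \|f\|_\infty \|g\|_q + \|f\|_q \|g\|_\infty$ to each summand of $((FG)')^{ij} = (F')^{ij} G^i + F^i (G')^{ij}$ then yields the desired $q$-variation bound for $(FG)'$. For the remainder, the first two terms $F^i_s(R^G)^i_{s,t}$ and $(R^F)^i_{s,t} G^i_s$ are immediately controlled in $r$-variation by $\|F\|_\infty \|R^G\|_r$ and $\|G\|_\infty \|R^F\|_r$ respectively.

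The main subtlety lies in the cross term $F^i_{s,t} G^i_{s,t}$, which naturally has finite $p/2$-variation and must be upgraded to $r$-variation. Since $1/r = 1/p + 1/q \leq 2/p$, one has $r \geq p/2$, so a naive H\"older splitting at exponent $r$ fails. Instead, I would verify that $c(s,t) := \|F\|_{p,[s,t]}^{p/2} \|G\|_{p,[s,t]}^{p/2}$ is itself a control function---superadditivity reduces to Cauchy--Schwarz applied to the superadditive maps $(s,t) \mapsto \|F\|_{p,[s,t]}^p$ and $(s,t) \mapsto \|G\|_{p,[s,t]}^p$---and that $|F^i_{s,t} G^i_{s,t}|^{p/2} \leq c(s,t)$. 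Since $2r/p \geq 1$, the control-function inequality $c(s,t)^{2r/p} \leq c(0,T)^{(2r-p)/p} c(s,t)$ yields $\|F^i_{\cdot,\cdot} G^i_{\cdot,\cdot}\|_{r,[0,T]} \leq \|F\|_{p,[0,T]} \|G\|_{p,[0,T]}$, which via the auxiliary bound is controlled by $(1+\|X\|_p)^2\|F,F'\|_{\crpXq}\|G,G'\|_{\crpXq}$. Assembling the four pieces, together with the trivial estimates $|F_0 G_0| \leq |F_0||G_0|$ and $|(FG)'_0| \leq |F'_0||G_0| + |F_0||G'_0|$, produces \eqref{eq:B continuous bilinear est}; the factor $(1+\|X\|_p)^2$ records one copy of $(1+\|X\|_p)$ each time a sup or $p$-norm of $F$ or $G$ is substituted. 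Bilinearity is immediate from the explicit formulae defining $\Pi$.
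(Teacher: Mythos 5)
Your proof is correct and follows essentially the same strategy as the paper: expand $(FG)_{s,t}$ to read off the Gubinelli derivative $(FG)'$ and the remainder $R^{FG}_{s,t} = F_s R^G_{s,t} + R^F_{s,t} G_s + F_{s,t}G_{s,t}$, then estimate each norm in $\|FG,(FG)'\|_{\crpXq}$ using the auxiliary bounds $\|F\|_\infty, \|F\|_p \lesssim (1+\|X\|_p)\|F,F'\|_{\crpXq}$. The one place where your argument diverges is the cross term $F_{s,t}G_{s,t}$: you build the control function $c(s,t) = \|F\|_{p,[s,t]}^{p/2}\|G\|_{p,[s,t]}^{p/2}$, verify its superadditivity by Cauchy--Schwarz, and then invoke $c^{2r/p} \leq c(0,T)^{(2r-p)/p}c$ to get the $r$-variation bound, whereas the paper dispatches it in one line by noting $\|F_{\cdot,\cdot}G_{\cdot,\cdot}\|_r \leq \|F\|_{2r}\|G\|_{2r} \leq \|F\|_p\|G\|_p$ (a discrete Cauchy--Schwarz followed by $2r \geq p$). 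Both hinge on exactly the same inequality $2r \geq p$; your route is a bit longer but perhaps more transparently shows \emph{why} the $r$-variation bound holds despite $r \geq p/2$. Otherwise the arguments coincide, including the Leibniz-type estimate for $\|(FG)'\|_q$ and the bookkeeping that produces the factor $(1+\|X\|_p)^2$.
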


\begin{proof}
  It is clear from its definition that $\Pi$ is a bilinear map. Suppose $(F,F'), (G,G') \in \crpXq$. For all $1 \leq i,j \leq d$ and $(s,t) \in \Delta_{[0,T]}$, we have
  \begin{align}
    \|(FG)'\|_q &\lesssim \|F'\|_q \|G\|_\infty + \|F'\|_\infty \|G\|_q + \|F\|_q \|G'\|_\infty + \|F\|_\infty \|G'\|_q\nonumber\\
    &\lesssim (\|F\|_\infty + \|F\|_q + \|F'\|_\infty + \|F'\|_q)(\|G\|_\infty + \|G\|_q + \|G'\|_\infty + \|G'\|_q)\label{eq:est (FG)' q-var}\\
    &\lesssim (1 + \|X\|_p)^2 \|F,F'\|_{\crpXq} \|G,G'\|_{\crpXq}.\nonumber
  \end{align}
  To identify the remainder $R^{FG}$, we compute
  \begin{align*}
    (FG)^i_{s,t} &= F^i_{s,t} G^i_s + F^i_s G^i_{s,t} + F^i_{s,t} G^i_{s,t}\\
    &= \bigg(\sum_{j=1}^d (F')^{ij}_s X^j_{s,t} + (R^F)^i_{s,t}\bigg) G^i_s + F^i_s \bigg(\sum_{j=1}^d (G')^{ij}_s X^j_{s,t} + (R^G)^i_{s,t}\bigg) + F^i_{s,t} G^i_{s,t}\\
    &= \sum_{j=1}^d \Big((F')^{ij}_s G^i_s + F^i_s (G')^{ij}_s\Big) X^j_{s,t} + (R^F)^i_{s,t} G^i_s + F^i_s (R^G)^i_{s,t} + F^i_{s,t} G^i_{s,t}\\
    &= \sum_{j=1}^d ((FG)')^{ij}_s X^j_{s,t} + (R^{FG})^i_{s,t},
  \end{align*}
  where $(R^{FG})^i_{s,t} := (R^F)^i_{s,t} G^i_s + F^i_s (R^G)^i_{s,t} + F^i_{s,t} G^i_{s,t}$. Using the fact that $2r \geq p$, we then estimate
  \begin{align}
    \|R^{FG}\|_r &\lesssim \|R^F\|_r \|G\|_\infty + \|F\|_\infty \|R^G\|_r + \|F\|_{2r} \|G\|_{2r}\nonumber\\
    &\lesssim (1 + \|X\|_p) \Big(\|R^F\|_r  \|G,G'\|_{\crpXq} + \|F,F'\|_{\crpXq} \|R^G\|_r\Big) + \|F\|_p \|G\|_p\label{eq:est R^FG r-var}\\
    &\lesssim (1 + \|X\|_p)^2 \|F,F'\|_{\crpXq} \|G,G'\|_{\crpXq}.\nonumber
  \end{align}
  The estimate \eqref{eq:B continuous bilinear est} then follows from \eqref{eq:est (FG)' q-var} and \eqref{eq:est R^FG r-var}.
\end{proof}

\subsection{Associativity of rough integration}\label{subsec: associativity of rough integration}

The following proposition provides an associativity result for rough integration.

\begin{proposition}\label{prop: associativity of rough integration}
  Let $\bX = (X,\X)$ be a $p$-rough path and let $(Y,Y'), (F,F'), (G,G') \in \crpXq$ be controlled paths. Then, the pair $(Z,Z') := (\int_0^\cdot F_u \,\d G_u,FG') \in \crpXq$, and we have that
  \begin{equation*}
    \int_0^\cdot Y_u \,\d Z_u = \int_0^\cdot Y_u F_u \,\d G_u,
  \end{equation*}
  where on the left-hand side we have the integral of $(Y,Y')$ against $(Z,Z')$, and on the right-hand side we have the integral of $(YF,(YF)')$ against $(G,G')$, each defined in the sense of Lemma~\ref{lem: int of controlled paths exists}.
\end{proposition}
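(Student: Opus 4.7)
My plan is to establish the proposition in two natural steps: first, verify that the indefinite integral $Z_\cdot := \int_0^\cdot F_u \,\d G_u$ is an $X$-controlled path with Gubinelli derivative $Z' = FG'$; and second, identify $\int Y \,\d Z$ with $\int Y F \,\d G$ through a Riemann-sum argument that hinges on the sewing-type error bound from the proof of Lemma~\ref{lem: int of controlled paths exists}.

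For the first step, I would set $R^Z_{s,t} := Z_{s,t} - (FG')_s X_{s,t}$ and use the controlled-path expansion $G_{s,t} = G'_s X_{s,t} + R^G_{s,t}$ to obtain the decomposition
\begin{equation*}
R^Z_{s,t} = \Psi_{s,t} + F_s R^G_{s,t} + F'_s G'_s \X_{s,t}, \qquad \Psi_{s,t} := Z_{s,t} - F_s G_{s,t} - F'_s G'_s \X_{s,t}.
\end{equation*}
The summand $F_s R^G_{s,t}$ has $r$-variation bounded by $\|F\|_\infty \|R^G\|_r$; the summand $F'_s G'_s \X_{s,t}$ inherits finite $p/2$-variation from $\X$, and hence finite $r$-variation since $r \geq p/2$; and $\Psi$ is precisely the sewing remainder from the proof of Lemma~\ref{lem: int of controlled paths exists}, so its $r$-variation is controlled by the right-hand side of \eqref{eq:est int of controlled paths}. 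Combined with $\|FG'\|_q \lesssim \|F\|_\infty \|G'\|_q + \|F\|_q \|G'\|_\infty < \infty$ (using $p \leq q$, so that $F$ also has finite $q$-variation), this shows $(Z, FG') \in \crpXq$.

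For the second step, Lemma~\ref{lem: int of controlled paths exists}, together with the identity $(YF)' = Y'F + YF'$ supplied by Lemma~\ref{lem: product of controlled rough paths}, expresses both sides as limits of compensated Riemann sums,
\begin{equation*}
\int_0^T Y_u \,\d Z_u = \lim_{|\cP|\to 0} \sum_{[s,t]\in\cP} \big(Y_s Z_{s,t} + Y'_s F_s G'_s \X_{s,t}\big),
\end{equation*}
\begin{equation*}
\int_0^T Y_u F_u \,\d G_u = \lim_{|\cP|\to 0} \sum_{[s,t]\in\cP} \big(Y_s F_s G_{s,t} + (Y'_s F_s + Y_s F'_s) G'_s \X_{s,t}\big).
\end{equation*}
Substituting $Z_{s,t} = F_s G_{s,t} + F'_s G'_s \X_{s,t} + \Psi_{s,t}$ in the first sum and rearranging reduces the desired equality to showing that the residual $\sum_{[s,t] \in \cP} Y_s \Psi_{s,t}$ vanishes as $|\cP| \to 0$.

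The main obstacle, and really the only point of substance, is to extract from the proof of Lemma~\ref{lem: int of controlled paths exists} a control function $\omega$ and an exponent $\theta > 1$ (whose existence is guaranteed by the condition $2/p + 1/q > 1$) such that $|\Psi_{s,t}| \leq C\, \omega(s,t)^\theta$. Granted this sewing-type bound,
\begin{equation*}
\bigg|\sum_{[s,t]\in\cP} Y_s \Psi_{s,t}\bigg| \leq \|Y\|_\infty\, \omega(0,T)\, \max_{[s,t]\in\cP} \omega(s,t)^{\theta - 1} \longrightarrow 0
\end{equation*}
by the uniform continuity of $\omega$ on $\Delta_{[0,T]}$, completing the identification.
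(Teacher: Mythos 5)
Your proof is correct and follows essentially the same route as the paper's: both reduce the identity to the observation that the sewing remainder $\Psi_{s,t} = \int_s^t F_u\,\d G_u - F_s G_{s,t} - F'_s G'_s \X_{s,t}$, controlled by the right-hand side of \eqref{eq:est int of controlled paths}, has $\hat p$-variation for some $\hat p < 1$, so that the telescoping Riemann sums over it (multiplied by the bounded path $Y$) vanish as the mesh tends to zero. The only cosmetic difference is that the paper writes the argument at the level of the small-interval integrals (introducing remainders $H^{\int Y\,\d Z}$ and $H^{\int YF\,\d G}$ in addition to $\Psi = H^{\int F\,\d G}$), whereas you work directly with the defining compensated Riemann sums, which is slightly leaner but amounts to the same computation; your more detailed treatment of the membership $(Z, FG')\in\crpXq$ is also in the spirit of what the paper leaves implicit.
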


\begin{proof}
  The fact that $(Z,Z') \in \crpXp$ follows from the estimate in \eqref{eq:est int of controlled paths} combined with the relation $G_{s,t} = G'_s X_{s,t} + R^G_{s,t}$. It also follows from \eqref{eq:est int of controlled paths} that the function $H^{\int F \,\d G}$, defined by
  \begin{equation*}
    Z_{s,t} = \int_s^t F_u \,\d G_u = F_s G_{s,t} + F'_s G'_s \X_{s,t} + H^{\int F \,\d G}_{s,t}
  \end{equation*}
  for $(s,t) \in \Delta_T$, has finite $\hat{p}$-variation for some $\hat{p} < 1$, and we can thus conclude that $\lim_{|\mathcal{P}| \to 0} \sum_{[s,t] \in \mathcal{P}} |H^{\int F \,\d G}_{s,t}| = 0$. We similarly obtain
  \begin{align*}
    \int_s^t Y_u \,\d Z_u &= Y_s Z_{s,t} + Y'_s Z'_s \X_{s,t} + H^{\int Y \,\d Z}_{s,t},\\
    \int_s^t Y_u F_u \,\d G_u &= Y_s F_s G_{s,t} + (YF)'_s G'_s \X_{s,t} + H^{\int YF \,\d G}_{s,t},
  \end{align*}
  with
  \begin{equation*}
    \lim_{|\mathcal{P}| \to 0} \sum_{[s,t] \in \mathcal{P}} |H^{\int Y \,\d Z}_{s,t}| = \lim_{|\mathcal{P}| \to 0} \sum_{[s,t] \in \mathcal{P}} |H^{\int YF \,\d G}_{s,t}| = 0.
  \end{equation*}
  Noting that $(YF)' = YF' + Y'F$, we then calculate
  \begin{align*}
    \int_s^t Y_u \,\d Z_u &= Y_s Z_{s,t} + Y'_s Z'_s \X_{s,t} + H^{\int Y \,\d Z}_{s,t}\\
    &= Y_s \Big(F_s G_{s,t} + F'_s G'_s \X_{s,t} + H^{\int F \,\d G}_{s,t}\Big) + Y'_s F_s G'_s \X_{s,t} + H^{\int Y \,\d Z}_{s,t}\\
    &= Y_s F_s G_{s,t} + (Y_s F'_s + Y'_s F_s) G'_s \X_{s,t} + Y_s H^{\int F \,\d G}_{s,t} + H^{\int Y \,\d Z}_{s,t}\\
    &= \int_s^t Y_u F_u \,\d G_u - H^{\int YF \,\d G}_{s,t} + Y_s H^{\int F \,\d G}_{s,t} + H^{\int Y \,\d Z}_{s,t}.
  \end{align*}
  Taking $\lim_{|\mathcal{P}| \to 0} \sum_{[s,t] \in \mathcal{P}}$ on both sides, we obtain $\int_0^T Y_u \,\d Z_u = \int_0^T Y_u F_u \,\d G_u$.
\end{proof}

\begin{remark}
  Denoting the integration operator by $\bullet$, the result of Proposition~\ref{prop: associativity of rough integration} may be expressed formally as $Y \bullet (F \bullet G) = (YF) \bullet G$. We therefore refer to this result as the \emph{associativity} of rough integration.
\end{remark}

\subsection{The canonical rough path lift of a controlled path}\label{sec Rough paths associated to controlled paths}

Given a $p$-rough path $\bX = (X,\X)$ and a controlled path $(Z,Z') \in \crpXq$, one can use Lemma~\ref{lem: int of controlled paths exists} to enhance $Z$ in a canonical way to a $p$-rough path $\bZ = (Z,\Z)$, where $\Z$ is defined by
\begin{equation}\label{eq:canonical lift of Z}
\Z_{s,t} := \int_s^t Z_u \,\d Z_u - Z_s Z_{s,t}, \qquad \text{for} \quad (s,t) \in \Delta_{[0,T]},
\end{equation}
with the integral defined as in \eqref{eq:int of controlled paths defn}. Indeed, we observe the following.

\begin{lemma}\label{lem: consistency of rough integrals}
  Let $\bX = (X,\X)$ be a $p$-rough path and $(Z,Z') \in \crpXq$ be a controlled path. Then,  $\bZ = (Z,\Z)$, as defined in \eqref{eq:canonical lift of Z}, is a $p$-rough path. Moreover, if $(Y,Y') \in \crpZq$, then $(Y,Y'Z') \in \crpXq$ and
  \begin{equation*}
    \int_0^T Y_u \dd \bZ_u = \int_0^T Y_u \dd Z_u,
  \end{equation*}
  where on the left-hand side we have the rough integral of $(Y,Y')$ against $\bZ$, and on the right-hand side we have the integral of $(Y,Y'Z')$ against $(Z,Z')$ as defined in~\eqref{eq:int of controlled paths defn}.
\end{lemma}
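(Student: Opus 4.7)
The statement splits into three claims: $\bZ = (Z,\Z)$ is a $p$-rough path; $(Y,Y'Z') \in \crpXq$; and the two integrals coincide.

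For the first claim, I would verify the three defining properties in turn. The bound $\|Z\|_p < \infty$ is immediate from $Z_{s,t} = Z'_s X_{s,t} + R^Z_{s,t}$, $\|X\|_p < \infty$, and the elementary inequality $\|R^Z\|_p \leq \|R^Z\|_r$ which holds since $r \leq p$. Chen's relation~\eqref{eq:Chens relation} for $\Z$ is a direct computation: the additivity $\int_s^t = \int_s^u + \int_u^t$ of the (componentwise) integral in Lemma~\ref{lem: int of controlled paths exists} yields
\begin{equation*}
\Z_{s,t} - \Z_{s,u} - \Z_{u,t} = -Z_s \otimes Z_{s,t} + Z_s \otimes Z_{s,u} + Z_u \otimes Z_{u,t} = Z_{s,u} \otimes Z_{u,t}.
\end{equation*}
For $\|\Z\|_{p/2} < \infty$, I would apply the sewing estimate underlying Lemma~\ref{lem: int of controlled paths exists} to $F = G = Z$, obtaining the expansion
\begin{equation}\label{eq:key exp}
\Z_{s,t} = (Z'_s \otimes Z'_s) \X_{s,t} + H_{s,t}, \qquad |H_{s,t}| \lesssim \omega(s,t)^{\theta},
\end{equation}
for some control $\omega$ and exponent $\theta > 1$; combining with $\|\X\|_{p/2} < \infty$ and the boundedness of $Z'$ gives the desired bound.

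For the second claim, substituting $Z_{s,t} = Z'_s X_{s,t} + R^Z_{s,t}$ into $Y_{s,t} = Y'_s Z_{s,t} + R^Y_{s,t}$ yields
\begin{equation*}
Y_{s,t} = (Y'_s Z'_s) X_{s,t} + \bigl(Y'_s R^Z_{s,t} + R^Y_{s,t}\bigr),
\end{equation*}
identifying $Y'Z'$ as the Gubinelli derivative and $\widetilde{R}^Y := Y' R^Z + R^Y$ as the remainder. The bounds $\|Y'Z'\|_q < \infty$ and $\|\widetilde{R}^Y\|_r < \infty$ follow from the product rule for $q$-variation and the triangle inequality, using that both $R^Z$ and $R^Y$ have finite $r$-variation for the $r$ of Assumption~\ref{ass: parameters}.

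For the identity of integrals, I would insert~\eqref{eq:key exp} into the compensated Riemann sum on the left-hand side:
\begin{equation*}
\sum_{[s,t] \in \cP} \bigl(Y_s Z_{s,t} + Y'_s \Z_{s,t}\bigr) = \sum_{[s,t] \in \cP} \bigl(Y_s Z_{s,t} + (Y'_s Z'_s) Z'_s \X_{s,t}\bigr) + \sum_{[s,t] \in \cP} Y'_s H_{s,t},
\end{equation*}
after identifying $Y'_s (Z'_s \otimes Z'_s) \X_{s,t} = (Y'_s Z'_s) Z'_s \X_{s,t}$ via associativity of the tensor contraction with $\X$. The sum $\sum Y'_s H_{s,t}$ vanishes as $|\cP| \to 0$, since $\|Y'\|_\infty < \infty$ and superadditivity yields $\sum_{[s,t] \in \cP} \omega(s,t)^\theta \leq \omega(0,T) \sup_{[s,t] \in \cP} \omega(s,t)^{\theta - 1} \to 0$. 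The remaining sum is precisely the Riemann sum from Lemma~\ref{lem: int of controlled paths exists} defining $\int_0^T Y_u \dd Z_u$ for the $X$-controlled pairs $(Y, Y'Z')$ and $(Z, Z')$, and passing to the limit produces the claimed identity. The main delicate point will be the tensor bookkeeping needed to identify the two contractions of $\X_{s,t}$ with the three occurring Gubinelli derivatives; once this identification is made explicit, all remaining estimates reduce to sewing-type arguments already developed earlier in the excerpt.
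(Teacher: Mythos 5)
Your proof is correct and follows essentially the same route as the paper's: establish that $\bZ$ is a $p$-rough path via the sewing estimate of Lemma~\ref{lem: int of controlled paths exists}, identify the $X$-Gubinelli derivative of $Y$ as $Y'Z'$ by direct substitution, and then compare the two compensated Riemann sums, showing the discrepancy consists only of terms that vanish when summed along a shrinking partition. The paper states the first two parts more tersely (leaving the verification implicit) and, for the third, works with the same abstract local-error functions $H^{\int \cdot \,\d \cdot}$ that you write out in terms of the sewing bound; the tensor contraction identity you defer (equating $Y'_s[(Z'_s\otimes Z'_s)\X_{s,t}]$ with $(Y'_sZ'_s)Z'_s\X_{s,t}$, modulo negligible terms) does hold under the conventions of Theorem~\ref{thm: rough integral exists} and Lemma~\ref{lem: int of controlled paths exists}, so filling it in is routine.
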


\begin{proof}
  That $\bZ = (Z,\Z)$ is a $p$-rough path follows immediately from Lemma~\ref{lem: int of controlled paths exists}. That $(Y,Y'Z') \in \crpXq$ can be shown in a straightforward manner using the definition of controlled paths. Arguing similarly as in the proof of Proposition~\ref{prop: associativity of rough integration} and using the same notation, we calculate, for $(s,t) \in \Delta_{[0,T]}$,
  \begin{align*}
    \int_s^t Y_u \,\d \bZ_u &= Y_s Z_{s,t} + Y'_s \Z_{s,t} + H^{\int Y \,\d \bZ}_{s,t}\\
    &= Y_s Z_{s,t} + Y'_s \Big(Z'_s Z'_s \X_{s,t} + H^{\int Z \,\d Z}_{s,t}\Big) + H^{\int Y \,\d \bZ}_{s,t}\\
    &= \int_s^t Y_u \,\d Z_u - H^{\int Y \,\d Z}_{s,t} + Y'_s H^{\int Z \,\d Z}_{s,t} + H^{\int Y \,\d \bZ}_{s,t}.
  \end{align*}
  Taking $\lim_{|\mathcal{P}| \to 0} \sum_{[s,t] \in \mathcal{P}}$ on both sides, we obtain $\int_0^T Y_u \,\d \bZ_u = \int_0^T Y_u \,\d Z_u$.
\end{proof}

\subsection{The exponential of a rough path}

Based on the bracket of a rough path (recall Definition~\ref{def: bracket}), one can introduce the rough exponential analogously to the stochastic exponential of It{\^o} calculus.

\begin{lemma}\label{lem: dynamics of rough exponential}
  For a one-dimensional $p$-rough path $\bX = (X,\X)$ (so that in particular $X$ is real-valued) such that $X_0 = 0$, we introduce the rough exponential by
  \begin{equation*}
    V_t := \exp\Big(X_t - \frac{1}{2}[\bX]_t\Big), \qquad t \in [0,T].
  \end{equation*}
  Then $V$ is the unique controlled path in $\crpXp$ satisfying the linear rough differential equation
  \begin{equation}\label{eq: dynamics of rough exponential}
    V_t = 1 + \int_0^t V_u \dd \bX_u,\qquad t \in [0,T],
  \end{equation}
  with Gubinelli derivative $V^\prime = V$.
\end{lemma}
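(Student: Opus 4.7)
The plan is to verify the formula by a direct application of the It\^o formula for rough paths (Proposition~\ref{prop: general Ito formula for rough paths}) to $V_t = g(F_t)$ with the choices $g(x) = e^x$ and $F_t := X_t - \frac{1}{2}[\bX]_t$, and then to establish uniqueness via the standard linear-RDE argument.

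First I would write $F_t = \int_0^t 1 \dd \bX_u + \Gamma_t$, where $\Gamma := -\frac{1}{2}[\bX]$, and identify the Gubinelli derivative of $F$ as the constant $F' \equiv 1$ (with $F'' \equiv 0$), so that $(F, F'), (F', F'') \in \crpXp$ trivially. The remainder $R^F_{s,t} = -\frac{1}{2}[\bX]_{s,t}$ has finite $p/2$-variation, and hence in particular finite $r$-variation, thanks to the bound $|[\bX]_{s,t}| \leq |X_{s,t}|^2 + 2|\S_{s,t}|$ that follows from Definition~\ref{def: bracket}. Applying Proposition~\ref{prop: general Ito formula for rough paths} with $g(x) = e^x$ (which is smooth and hence of class $C^{p+\epsilon}$ when localized to the bounded range of $F$ on $[0,T]$), and using the identities $V_u = e^{F_u}$ and $\D g(F_u) = \D^2 g(F_u) = V_u$, yields
\begin{equation*}
V_t = 1 + \int_0^t V_u \dd \bX_u + \int_0^t V_u \dd \Gamma_u + \frac{1}{2} \int_0^t V_u \dd [\bX]_u.
\end{equation*}
Because $\Gamma = -\frac{1}{2}[\bX]$, the two Young/Riemann--Stieltjes integrals against $[\bX]$ cancel, leaving exactly \eqref{eq: dynamics of rough exponential}.

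For the controlled-path structure I would invoke the standard fact that if $Z_t = \int_0^t Y_u \dd \bX_u$ for $(Y,Y') \in \crpXq$ then $(Z, Y) \in \crpXq$; applied to our identity this shows $(V, V) \in \crpXp$, with Gubinelli derivative $V' = V$. Equivalently, one may observe directly from $V = g(F)$ that $V$ is controlled with Gubinelli derivative $\D g(F) F' = V \cdot 1 = V$.

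Finally, for uniqueness, suppose $\widetilde V \in \crpXp$ with Gubinelli derivative $\widetilde V' = \widetilde V$ is another solution. Then $\Delta := V - \widetilde V$ satisfies $\Delta_0 = 0$ and $\Delta_t = \int_0^t \Delta_u \dd \bX_u$, with Gubinelli derivative $\Delta$. Combining the rough integral estimate~\eqref{eq:rough integral estimate} with a standard rough-path Gronwall argument on successive small subintervals, as in the existence and uniqueness theory for linear rough differential equations in \cite{Friz2020}, forces $\Delta \equiv 0$. The only real obstacle is the bookkeeping of the It\^o computation---in particular verifying the correct cancellation of the two integrals against $[\bX]$ and confirming that $[\bX]$ has enough regularity to serve as a legitimate $\Gamma$ in Proposition~\ref{prop: general Ito formula for rough paths}; uniqueness is then a routine consequence of the linearity.
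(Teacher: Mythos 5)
Your proof is correct and follows essentially the same route as the paper: apply the rough It\^o formula (Proposition~\ref{prop: general Ito formula for rough paths}) to $g = \exp$ with $F = X - \frac{1}{2}[\bX]$, $F' \equiv 1$, $\Gamma = -\frac{1}{2}[\bX]$, observe the cancellation of the Young integrals against $[\bX]$, and deduce uniqueness from the stability of rough integration (the paper cites \cite[Lemma~3.4]{Friz2018} for this last step, which is the same Gronwall-type argument you outline). Your additional remarks on the controlled-path structure of $(F,F')$, the $p/2$-variation of $R^F$, and the localization of $\exp$ to the bounded range of $F$ correctly fill in details that the paper's brief proof leaves implicit.
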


\begin{proof}
  Applying the It{\^o} formula of Proposition~\ref{prop: general Ito formula for rough paths} with $Y = X - \frac{1}{2}[\bX]$, $Y' = 1$ and $f = \exp$, we observe that the Young integrals cancel, so that $V$ does indeed satisfy \eqref{eq: dynamics of rough exponential}. The uniqueness of solutions to \eqref{eq: dynamics of rough exponential} follows from the stability of rough integration, provided in this setting by \cite[Lemma~3.4]{Friz2018}.
\end{proof}

\subsection{A Fubini type theorem for rough integration}

In this subsection we provide a Fubini type theorem for Bochner and rough integrals. A result of this type is mentioned in a H{\"o}lder-rough path setting in \cite[Exercise~4.10]{Friz2020}.

\begin{theorem}\label{thm: rough Fubini}
  Let $\bX = (X,\X)$ be a $p$-rough path, let $\mathcal{A}$ be a measurable subset of $\crpXq$, and let $\nu$ be a probability measure on $\mathcal{A}$. If $\int_{\mathcal{A}} \|K,K'\|_{\crpXq} \,\d \nu < \infty$, then
  \begin{equation*}
    \int_0^T \int_{\mathcal{A}} K_u \,\d \nu \,\d \bX_u = \int_{\mathcal{A}} \int_0^T K_u \,\d \bX_u \,\d \nu.
  \end{equation*}
\end{theorem}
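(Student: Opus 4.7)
The plan is to recognize that the rough integration operator
\begin{equation*}
T_{\bX} \colon \crpXq \to \R, \qquad T_{\bX}(K, K') := \int_0^T K_u \dd \bX_u,
\end{equation*}
is a bounded linear functional on $\crpXq$, and then to invoke the standard fact that bounded linear operators commute with Bochner integration. Linearity of $T_{\bX}$ is immediate from the compensated Riemann sum defining \eqref{eq: rough integration}. For boundedness, the estimate \eqref{eq:rough integral estimate} applied with $s=0$ and $t=T$ gives
\begin{equation*}
|T_{\bX}(K, K')| \leq |K_0||X_{0,T}| + |K'_0||\X_{0,T}| + C\big(\|R^K\|_{r,[0,T]} \|X\|_{p,[0,T]} + \|K'\|_{q,[0,T]} \|\X\|_{\frac{p}{2},[0,T]}\big),
\end{equation*}
and each of the four terms on the right is bounded by $\|K, K'\|_{\crpXq}$ times a constant depending only on $\bX$.

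Next, I would observe that the hypothesis $\int_{\mathcal{A}} \|K, K'\|_{\crpXq} \dd \nu < \infty$ ensures the Bochner integral $(\bar K, \bar K') := \int_{\mathcal{A}} (K, K') \dd \nu$ is well-defined in the Banach space $\crpXq$. Since the evaluation maps $(K, K') \mapsto K_t$, $(K, K') \mapsto K'_t$, and the remainder functional $(K, K') \mapsto R^K_{s,t}$ are all continuous linear functionals on $\crpXq$, they commute with the Bochner integral, yielding $\bar K_t = \int_{\mathcal{A}} K_t \dd \nu$, $\bar K'_t = \int_{\mathcal{A}} K'_t \dd \nu$, and $R^{\bar K}_{s,t} = \int_{\mathcal{A}} R^K_{s,t} \dd \nu$. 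A partition-by-partition Minkowski argument then gives $\|\bar K, \bar K'\|_{\crpXq} \leq \int_{\mathcal{A}} \|K, K'\|_{\crpXq} \dd \nu$. The conclusion follows from the commutation of the bounded linear operator $T_{\bX}$ with Bochner integration:
\begin{equation*}
T_{\bX}(\bar K, \bar K') = T_{\bX}\bigg(\int_{\mathcal{A}} (K, K') \dd \nu\bigg) = \int_{\mathcal{A}} T_{\bX}(K, K') \dd \nu,
\end{equation*}
which, after unpacking, is exactly the claimed Fubini identity.

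The main subtlety I anticipate is the verification that $R^{\bar K}_{s,t} = \int_{\mathcal{A}} R^K_{s,t} \dd \nu$ is the remainder of a controlled path with finite $r$-variation bounded by $\int_{\mathcal{A}} \|R^K\|_{r,[0,T]} \dd \nu$; the key ingredient is the vector-valued Minkowski inequality $\big(\sum_i \big|\int_{\mathcal{A}} R^K_{t_i,t_{i+1}} \dd \nu\big|^r\big)^{1/r} \leq \int_{\mathcal{A}} \big(\sum_i |R^K_{t_i,t_{i+1}}|^r\big)^{1/r} \dd \nu$, from which the bound on $\|R^{\bar K}\|_{r,[0,T]}$ follows by taking the supremum over partitions. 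A secondary technicality is the strong measurability of the inclusion $\mathcal{A} \hookrightarrow \crpXq$ required to define the Bochner integral, which is nontrivial because $\crpXq$ is not separable in general; in the applications of this theorem (as in Lemma~\ref{lemma wealth of universal portfolio}) the relevant families of controlled paths arise as continuous images of a compact metric space and so live in a separable subspace, which suffices. An alternative approach would pass to the limit directly in the identity $\sum_{[s,t] \in \mathcal{P}^n}(\bar K_s X_{s,t} + \bar K'_s \X_{s,t}) = \int_{\mathcal{A}} \sum_{[s,t] \in \mathcal{P}^n}(K_s X_{s,t} + K'_s \X_{s,t}) \dd \nu$ (valid by linearity of the Bochner integral over finite sums) via dominated convergence, but a $\nu$-integrable uniform bound on the compensated Riemann sums would require higher integrability of $\|K, K'\|_{\crpXq}$ than we have assumed, so the operator route is preferable.
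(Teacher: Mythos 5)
Your proposal is correct and takes a genuinely different route from the paper's. You cast the rough integral $T_{\bX}\colon \crpXq \to \R$ as a bounded linear functional (boundedness following from \eqref{eq:rough integral estimate} at the single scale $[0,T]$) and then invoke the standard Hille-type theorem that bounded linear operators commute with Bochner integrals. The paper instead works directly with the compensated Riemann sums: it telescopes $\int_{\mathcal{A}}\int_0^T K_u\,\d\bX_u\,\d\nu$ over a partition $\mathcal{P}$, applies \eqref{eq:rough integral estimate} on each subinterval, and kills the resulting error sum via a H\"older argument with conjugate exponents $r$ and $\hat{p}$ together with the uniform continuity of $(s,t) \mapsto \|X\|_{p,[s,t]}$. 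Your argument is more modular and shorter once one grants the functional-analytic black box; the paper's is more self-contained and, in particular, makes explicit why it suffices to have $\|K,K'\|_{\crpXq}$ in $L^1(\nu)$ rather than in a higher $L^m$ space — precisely the point you raise when dismissing the ``direct dominated convergence'' alternative. One additional remark: your observation about strong measurability of the inclusion $\mathcal{A}\hookrightarrow\crpXq$ (needed since $\crpXq$ is non-separable) is a genuine subtlety that the paper glosses over as well; both proofs implicitly assume it when forming the Bochner integral $\int_{\mathcal{A}}(K,K')\,\d\nu$, and your resolution via separable range in the intended applications is the right one.
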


\begin{proof}
  Due to $\int_{\mathcal{A}} \|K,K'\|_{\crpXq} \,\d \nu < \infty$, the controlled path $\int_{\mathcal{A}} (K,K') \,\d \nu \in \crpXq$ exists as a well-defined Bochner integral. For $s < t$, we have
  \begin{align*}
    \int_{\mathcal{A}} \int_s^t K_u \dd \bX_u \,\d \nu - \int_{\mathcal{A}} K_s \,\d \nu \, X_{s,t} - \int_{\mathcal{A}} K'_s \dd \nu \, \X_{s,t}
    = \int_{\mathcal{A}} \bigg(\int_s^t K_u \,\d \bX_u - K_s X_{s,t} - K'_s \X_{s,t}\bigg) \dd \nu
  \end{align*}
  and, by the estimate in \eqref{eq:rough integral estimate},
  \begin{equation}\label{eq:H^K s,t bound}
    \bigg|\int_s^t K_u \,\d \bX_u - K_s X_{s,t} - K'_s \X_{s,t}\bigg| \leq C(\|R^K\|_{r,[s,t]} \|X\|_{p,[s,t]} + \|K'\|_{q,[s,t]} \|\X\|_{\frac{p}{2},[s,t]}).
  \end{equation}
  Since $1/r + 1/p > 1$, there exists a $\hat{p} > p$ such that $1/r + 1/\hat{p} = 1$. By H{\"o}lder's inequality, for any partition $\mathcal{P}$ of $[0,T]$, we have
  \begin{align*}
    \int_{\mathcal{A}} \sum_{[s,t] \in \mathcal{P}} \|R^K\|_{r,[s,t]} \|X\|_{p,[s,t]} \,\d \nu &\leq \int_{\mathcal{A}} \bigg(\sum_{[s,t] \in \mathcal{P}} \|R^K\|_{r,[s,t]}^r\bigg)^{\frac{1}{r}} \bigg(\sum_{[s,t] \in \mathcal{P}} \|X\|_{p,[s,t]}^{\hat{p}}\bigg)^{\frac{1}{\hat{p}}} \,\d \nu\\
    &\leq \int_{\mathcal{A}} \|R^K\|_{r,[0,T]} \,\d \nu \, \|X\|_{p,[0,T]}^{\frac{p}{\hat{p}}} \Big(\max_{[s,t] \in \mathcal{P}} \|X\|_{p,[s,t]}^{\frac{\hat{p} - p}{\hat{p}}}\Big).
  \end{align*}
  Since $\int_{\mathcal{A}} \|R^K\|_{r,[0,T]} \,\d \nu \leq \int_{\mathcal{A}} \|K,K'\|_{\crpXq} \,\d \nu < \infty$, and since $(s,t) \mapsto \|X\|_{p,[s,t]}$ is uniformly continuous, we deduce, treating the second term on the right-hand side of \eqref{eq:H^K s,t bound} similarly, that
  \begin{equation*}
    \lim_{|\mathcal{P}| \to 0} \sum_{[s,t] \in \mathcal{P}} \int_{\mathcal{A}}  \bigg(\int_s^t K_u \,\d \bX_u - K_s X_{s,t} - K'_s \X_{s,t}\bigg) \,\d \nu = 0.
  \end{equation*}
  Thus, we obtain
  \begin{align*}
    \int_{\mathcal{A}} \int_0^T K_u \,\d \bX_u \,\d \nu 
    &= \lim_{|\mathcal{P}| \to 0} \sum_{[s,t] \in \mathcal{P}} \int_{\mathcal{A}} \int_s^t K_u \,\d \bX_u \,\d \nu\\  
    &= \lim_{|\mathcal{P}| \to 0} \sum_{[s,t] \in \mathcal{P}} \int_{\mathcal{A}} K_s \,\d \nu \, X_{s,t} + \int_{\mathcal{A}} K'_s \,\d \nu \, \X_{s,t} = \int_0^T \int_{\mathcal{A}} K_u \,\d \nu \,\d \bX_u.
  \end{align*}
\end{proof}

\section{Rough path theory assuming Property (RIE)}\label{sec:appendix on RIE}

In this section we provide additional results concerning rough path theory assuming Property \textup{(RIE)}, and, in particular, we give a proof of Theorem~\ref{thm: Ito integral for smooth transformed RIE path}. As usual we adopt Assumption~\ref{ass: parameters}.

\subsection{On the bracket of a rough path}

We begin with some properties of the bracket of a rough path, introduced in Definition~\ref{def: bracket}.

\begin{lemma}\label{lem: Ito isometry for rough paths}
  Let $\bX = (X,\X)$ be a $p$-rough path and let $(K,K') \in \crpXq$. Recall from Proposition~\ref{prop: associativity of rough integration} that $(Z,Z') := (\int_0^\cdot K_u \,\d \bX_u,K) \in \crpXq$. Let $\bZ = (Z,\Z)$ be the canonical rough path lift of $Z$, as defined in \eqref{eq:canonical lift of Z}, so that in particular the bracket $[\bZ]$ of $\bZ$ exists. Then
  \begin{equation*}
    [\bZ] = \int_0^\cdot (K_u \otimes K_u) \,\d [\bX]_u,
  \end{equation*}
  where the right-hand side is defined as a Young integral.
\end{lemma}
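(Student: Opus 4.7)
The plan is to show that both sides of the claimed identity, viewed as additive functions of subintervals of $[0,T]$, agree locally up to an error of strictly super-unit order, and then invoke a standard partition-refinement argument.

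First, I would verify that $\Psi_{s,t} := \int_s^t (K_u \otimes K_u) \,\d [\bX]_u$ is well-defined as a Young integral: $K \otimes K$ has finite $p$-variation (inherited from the controlled path component $K$), $[\bX]$ has finite $p/2$-variation, and $1/p + 2/p > 1$ since $p < 3$. Both $[\bZ]_{s,t} := [\bZ]_t - [\bZ]_s$ and $\Psi_{s,t}$ are additive over subintervals, so their difference $D_{s,t} := [\bZ]_{s,t} - \Psi_{s,t}$ is additive as well.

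The core estimate to establish is $|D_{s,t}| \leq w(s,t)^\gamma$ for some control function $w$ and exponent $\gamma > 1$. Once in place, additivity gives, for any partition $\mathcal{P}$ of $[s,t]$,
$$|D_{s,t}| \leq \sum_{[u,v] \in \mathcal{P}} |D_{u,v}| \leq w(s,t) \max_{[u,v] \in \mathcal{P}} w(u,v)^{\gamma - 1} \, \longrightarrow \, 0 \qquad \text{as} \quad |\mathcal{P}| \to 0,$$
forcing $D \equiv 0$. To obtain the bound I would combine two expansions. With $Z = \int_0^\cdot K_u \,\d \bX_u$ having Gubinelli derivative $K$, the estimate \eqref{eq:rough integral estimate} gives $Z_{s,t} = K_s X_{s,t} + K'_s \X_{s,t} + R^Z_{s,t}$, and Lemma~\ref{lem: int of controlled paths exists} applied to $\int Z \,\d Z$ together with the defining relation \eqref{eq:canonical lift of Z} gives $\Z_{s,t} = (K_s \otimes K_s) : \X_{s,t} + H_{s,t}$, with explicit control-function bounds on $R^Z$ and $H$. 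Substituting these into $[\bZ]_{s,t} = Z_{s,t} \otimes Z_{s,t} - 2\mathrm{Sym}(\Z_{s,t})$ and using the symmetry of $K_s \otimes K_s$, the leading block collapses exactly to $(K_s \otimes K_s):[\bX]_{s,t}$, which in turn matches $\Psi_{s,t}$ up to a Young-integration error of order $w(s,t)^{3/p}$. Every residual cross term and the contribution of $H_{s,t}$ carries order $w(s,t)^{2/p + 1/q}$, $w(s,t)^{3/p}$ or $w(s,t)^{4/p}$, each strictly greater than $1$ by Assumption~\ref{ass: parameters}.

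The main technical obstacle is the systematic bookkeeping: cataloguing every residual term from the two expansions, identifying its variation exponent in the appropriate tensor space, and verifying that all exponents strictly exceed one. Everything hinges on the combined inputs $p \in (2,3)$ and $2/p + 1/q > 1$ built into Assumption~\ref{ass: parameters}; the rest of the argument is the additivity/refinement step sketched above.
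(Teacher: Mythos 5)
Your proposal is correct and takes essentially the same route as the paper: expand $Z_{s,t}$ and $\Z_{s,t}$ to leading order via the rough-integral estimates, observe that the surviving term is $(K_s\otimes K_s)[\bX]_{s,t}$ (using the symmetry of $K_s\otimes K_s$ to kill the antisymmetric part of $\X$), and identify the limit of Riemann sums with the Young integral. The only difference is presentational—the paper abbreviates the error bookkeeping with an $o(|t-s|)$ convention and sums directly, whereas you package the same bounds into an additive remainder $D_{s,t}$ of control exponent $\gamma>1$ and invoke the uniqueness part of the sewing argument; these are equivalent.
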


\begin{proof}
  Since $[\bX]$ has finite $p/2$-variation, the integral
  \begin{equation*}
    \int_0^T (K_u \otimes K_u) \,\d [\bX]_u = \lim_{|\mathcal{P}| \to 0} \sum_{[s,t] \in \mathcal{P}} (K_s \otimes K_s) [\bX]_{s,t} 
  \end{equation*}
  exists as a Young integral. In the following we shall abuse notation slightly by writing $H_{s,t} = o(|t - s|)$ whenever a function $H$ satisfies $\lim_{|\mathcal{P}| \to 0} \sum_{[s,t] \in \mathcal{P}} |H_{s,t}| = 0$. We have
  \begin{align*}
    [\bZ]_{s,t} &= Z_{s,t} \otimes Z_{s,t} - 2\textup{Sym}(\Z_{s,t})\\
    &= (K_s X_{s,t} + K'_s \X_{s,t}) \otimes (K_s X_{s,t} + K'_s \X_{s,t}) - 2(Z'_s \otimes Z'_s)\textup{Sym}(\X_{s,t}) + o(|t - s|)\\
    &= (K_s X_{s,t}) \otimes (K_s X_{s,t}) - 2(K_s \otimes K_s)\textup{Sym}(\X_{s,t}) + o(|t - s|)\\
    &= (K_s \otimes K_s) [\bX]_{s,t} + o(|t - s|).
  \end{align*}
  Taking $\lim_{|\mathcal{P}| \to 0} \sum_{[s,t] \in \mathcal{P}}$ on both sides, we obtain $[\bZ]_T = \int_0^T (K_u \otimes K_u) \,\d [\bX]_u$.
\end{proof}

\begin{proposition}\label{prop: property of bracket of rough paths}
  Suppose that $S\in C([0,T];\R^d)$ satisfies \textup{(RIE)} with respect to $p$ and $(\mathcal{P}^n)_{n \in \N}$. Let $\bS = (S,\S)$ be the associated rough path as defined in \eqref{eq:defn A st}. Let $(K,K') \in \crpSq$ and $(Z,Z') = (\int_0^\cdot K_u \dd \bS_u,K) \in \crpSq$. Let $\bZ = (Z,\Z)$ be the canonical rough path lift of $Z$ as defined in \eqref{eq:canonical lift of Z}, so that in particular the bracket $[\bZ]$ of $\bZ$ exists. Then the following hold:
  \begin{enumerate}
    \item[(i)] The bracket $[\bZ]$ has finite total variation, and is given by
    \begin{equation*}
      [\bZ]_t = \lim_{n \to \infty} \sum_{k=0}^{N_n-1} Z_{t^n_k \wedge t,t^n_{k+1} \wedge t} \otimes Z_{t^n_k \wedge t,t^n_{k+1} \wedge t}, \qquad t \in [0,T].
   \end{equation*}
   \item[(ii)] Let $\Gamma$ be a continuous path of finite $p/2$-variation. Then the path $Y := Z + \Gamma$ admits a canonical rough path lift $\mathbf{Y} = (Y,\mathbb{Y})$, such that
   \begin{equation}\label{eq:bracket of Y}
     [\mathbf{Y}]_t = [\bZ]_t = \lim_{n \to \infty} \sum_{k=0}^{N_n-1} Y_{t^n_k \wedge t,t^n_{k+1} \wedge t} \otimes Y_{t^n_k \wedge t,t^n_{k+1} \wedge t}, \qquad t \in [0,T].
    \end{equation}
  \end{enumerate}
\end{proposition}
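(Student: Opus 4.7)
For part~(i), the first equality is essentially immediate from Lemma~\ref{lem: Ito isometry for rough paths}, which gives $[\bZ]_t = \int_0^t (K_u \otimes K_u) \dd [\bS]_u$; since by Lemma~\ref{lem: property of bracket of rough paths} $[\bS]$ has finite total variation, so does $[\bZ]$. For the Riemann-sum representation, my plan is to use the rough-integral estimate from Theorem~\ref{thm: rough integral exists} to decompose
\begin{equation*}
Z_{s,t} = K_s S_{s,t} + K'_s \S_{s,t} + H_{s,t}, \qquad |H_{s,t}| \lesssim \|R^K\|_{r,[s,t]} \|S\|_{p,[s,t]} + \|K'\|_{q,[s,t]} \|\S\|_{\frac{p}{2},[s,t]},
\end{equation*}
and then expand $Z_{s,t} \otimes Z_{s,t}$. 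The leading piece $(K_s \otimes K_s)(S_{s,t} \otimes S_{s,t})$ sums, via continuity of $K$ and Lemma~\ref{lem: property of bracket of rough paths}, to $\int_0^t (K_u \otimes K_u) \dd [\bS]_u = [\bZ]_t$ uniformly in $t$. Every remaining term in the expansion contains at least one factor of $\S_{s,t}$ or $H_{s,t}$, and the combined variation exponent of each such term in a suitable common control function strictly exceeds one, so the corresponding Riemann sums vanish uniformly as $|\mathcal{P}^n| \to 0$.

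For part~(ii), I would define the canonical lift by
\begin{equation*}
\mathbb{Y}_{s,t} := \Z_{s,t} + \int_s^t Z_{s,u} \otimes \dd \Gamma_u + \int_s^t \Gamma_{s,u} \otimes \dd Z_u + \int_s^t \Gamma_{s,u} \otimes \dd \Gamma_u,
\end{equation*}
where the last three integrals are Young integrals, well-defined because $Z$ has finite $p$-variation, $\Gamma$ has finite $p/2$-variation, and $1/p + 1/(p/2) = 3/p > 1$ since $p < 3$. Chen's relation for $\mathbb{Y}$ then follows from Chen's relation for $\Z$ together with the additivity of Young integrals over subintervals: the excess contributions combine to give $Z_{s,u} \otimes \Gamma_{u,t} + \Gamma_{s,u} \otimes Z_{u,t} + \Gamma_{s,u} \otimes \Gamma_{u,t}$, which with $Z_{s,u} \otimes Z_{u,t}$ equals $Y_{s,u} \otimes Y_{u,t}$. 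To identify the bracket, I would expand $Y_{0,t}^{\otimes 2}$ into its four bilinear pieces and apply Young integration by parts componentwise to compute $2\textup{Sym}$ of the three added integrals; pairing $\int Z^i \dd \Gamma^j$ with $\int \Gamma^j \dd Z^i$ (and symmetrically in $i,j$) gives $2\textup{Sym}(\int_0^t Z_{0,u} \otimes \dd \Gamma_u + \int_0^t \Gamma_{0,u} \otimes \dd Z_u) = Z_{0,t} \otimes \Gamma_{0,t} + \Gamma_{0,t} \otimes Z_{0,t}$, and similarly $2\textup{Sym}(\int_0^t \Gamma_{0,u} \otimes \dd \Gamma_u) = \Gamma_{0,t} \otimes \Gamma_{0,t}$. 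After cancellation with the mixed and pure-$\Gamma$ components of $Y_{0,t}^{\otimes 2}$, only $Z_{0,t}^{\otimes 2} - 2\textup{Sym}(\Z_{0,t}) = [\bZ]_t$ remains.

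For the Riemann-sum identity in (ii), I would split $Y_{s,t}^{\otimes 2}$ into the same four tensor pieces. The pure-$Z$ piece converges to $[\bZ]_t$ uniformly in $t$ by part~(i), while setting $\omega(s,t) := \|Z\|_{p,[s,t]}^p + \|\Gamma\|_{p/2,[s,t]}^{p/2}$ as a common control function, the bounds $|Z_{s,t}| \leq \omega(s,t)^{1/p}$ and $|\Gamma_{s,t}| \leq \omega(s,t)^{2/p}$ give $|Z_{s,t} \otimes \Gamma_{s,t}| \leq \omega(s,t)^{3/p}$ and $|\Gamma_{s,t} \otimes \Gamma_{s,t}| \leq \omega(s,t)^{4/p}$, both with exponents strictly larger than $1$ (as $p < 3$). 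The standard telescoping estimate $\sum_k \omega(s_k,t_k)^\theta \leq (\max_k \omega(s_k,t_k))^{\theta - 1} \omega(0,T)$, combined with continuity of $\omega$, then forces the three non-$Z$ sums to vanish uniformly in $t$. The main obstacle I anticipate is the error-term bookkeeping in part~(i): although $H_{s,t}$ and $\S_{s,t}$ each individually enjoy ``better'' regularity than $S_{s,t}$, making the uniform-in-$t$ vanishing argument work requires building a single super-additive control function dominating $\|R^K\|_r^r$, $\|K'\|_q^q$, $\|S\|_p^p$ and $\|\S\|_{p/2}^{p/2}$ simultaneously, and verifying that every cross-product term arising from squaring $K_s S_{s,t} + K'_s \S_{s,t} + H_{s,t}$ has a combined exponent in this control strictly greater than one.
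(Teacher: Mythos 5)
Your proposal is correct and follows essentially the same route as the paper: Lemma~\ref{lem: Ito isometry for rough paths} together with Lemma~\ref{lem: property of bracket of rough paths} for the identification and finite variation of $[\bZ]$, the decomposition $Z_{s,t}=K_s S_{s,t}+K_s'\S_{s,t}+H_{s,t}$ from \eqref{eq:rough integral estimate} for the Riemann-sum identity, and in part~(ii) the same Young-integral definition of $\mathbb{Y}$ followed by integration by parts. The only difference is that you spell out the integration-by-parts cancellation and the control-function bookkeeping (exponents $3/p,4/p>1$ since $p<3$) that the paper leaves implicit; your anticipated ``main obstacle'' is in fact handled exactly as you describe, so there is no gap.
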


\begin{proof}
  (i) Since, by Lemma~\ref{lem: property of bracket of rough paths}, $[\bS]$ has finite variation, it follows from Lemma~\ref{lem: Ito isometry for rough paths} that the same is true of $[\bZ]$. By the estimate in \eqref{eq:rough integral estimate}, we know that $Z_{s,t} = K_s S_{s,t} + K'_s \S_{s,t} + H_{s,t}$ for some $H$ satisfying $\lim_{|\mathcal{P}| \to 0} \sum_{[s,t] \in \mathcal{P}} |H_{s,t}| = 0$. It follows that
  \begin{align*}
    \lim_{n \to \infty} \sum_{k=0}^{N_n-1} Z_{t^n_k \wedge t,t^n_{k+1} \wedge t} \otimes Z_{t^n_k \wedge t,t^n_{k+1} \wedge t} &= \lim_{n \to \infty} \sum_{k=0}^{N_n-1} (K_{t^n_k \wedge t} S_{t^n_k \wedge t,t^n_{k+1} \wedge t}) \otimes (K_{t^n_k \wedge t} S_{t^n_k \wedge t,t^n_{k+1} \wedge t})\\
    &= \int_0^t (K_u \otimes K_u) \,\d [\bS]_u = [\bZ]_t.
  \end{align*}

  (ii) Since $\Gamma$ has finite $p/2$-variation, the Young integrals $\int_s^t Z_{s,u} \otimes \d \Gamma_u$, $\int_s^t \Gamma_{s,u} \otimes \d Z_u$ and $\int_s^t \Gamma_{s,u} \otimes \d \Gamma_u$ are well-defined, and the function $\mathbb{Y}$, defined by
  \begin{equation*}
    \mathbb{Y}_{s,t} = \Z_{s,t} + \int_s^t Z_{s,u} \otimes \d \Gamma_u + \int_s^t \Gamma_{s,u} \otimes \d Z_u + \int_s^t \Gamma_{s,u} \otimes \d \Gamma_u,
  \end{equation*}
  also has finite $p/2$-variation. It follows that $\mathbf{Y} = (Y,\mathbb{Y})$ is a $p$-rough path. The equality $[\mathbf{Y}]_t = [\bZ]_t$ follows easily from the integration by parts formula for Young integrals. The second equality in \eqref{eq:bracket of Y} follows by a similar argument to the one in the proof of part~(i).
\end{proof}

\subsection{Proof -- the rough integral as a limit of Riemann sums}

\begin{proof}[Proof of Theorem~\ref{thm: Ito integral for smooth transformed RIE path}]
  Let $(Y,Y') \in \crpSq$. Recalling the It{\^o} formula for rough paths (Proposition~\ref{prop: general Ito formula for rough paths}), it follows from the associativity of Young and rough integrals (recall Proposition~\ref{prop: associativity of rough integration}) that
  \begin{equation*}
    \int_0^t Y_u \,\d f(S)_u = \int_0^t Y_u \D f(S_u) \,\d \bS_u + \frac{1}{2} \int_0^t Y_u \D^2 f(S_u) \,\d [\bS]_u.
  \end{equation*}
  By \cite[Theorem~4.19]{Perkowski2016}, we have
  \begin{equation*}
    \int_0^t Y_u \D f(S_u) \dd \bS_u = \lim_{n \to \infty} \sum_{k=0}^{N_n-1} Y_{t^n_k} \D f(S_{t^n_k})S_{t^n_k \wedge t,t^n_{k+1} \wedge t},
  \end{equation*}
  the convergence being uniform in $t \in [0,T]$. By \cite[Lemma~5.11]{Friz2020}, we have the pointwise convergence
  \begin{equation}\label{eq:convergence to integral against bracket}
    \lim_{n \to \infty} \sum_{k=0}^{N_n-1} Y_{t^n_k} \D^2 f(S_{t^n_k}) S_{t^n_k \wedge t,t^n_{k+1} \wedge t}^{\otimes 2} = \int_0^t Y_u \D^2 f(S_u) \,\d [\bS]_u.
  \end{equation}
  Recalling P{\'o}lya's theorem (see e.g.~\cite{Rao1962}), which asserts that pointwise convergence of distribution functions on $\R$ to a continuous limit implies the uniformity of this convergence, we see from the proof of \cite[Lemma~5.11]{Friz2020} that the convergence in \eqref{eq:convergence to integral against bracket} also holds uniformly for $t \in [0,T]$. Thus, we obtain
  \begin{equation}\label{eq: uniform convergence of rough integral}
    \int_0^t Y_u \,\d f(S)_u = \lim_{n \to \infty} \sum_{k=0}^{N_n-1} \Big( Y_{t^n_k} \D f(S_{t^n_k}) S_{t^n_k \wedge t,t^n_{k+1} \wedge t} + \frac{1}{2} Y_{t^n_k} \D^2 f(S_{t^n_k}) S_{t^n_k \wedge t,t^n_{k+1} \wedge t}^{\otimes 2}\Big),
  \end{equation}
  where the convergence is uniform in $t \in [0,T]$. For every $n$ and $k$, we have, by Taylor expansion,
  \begin{align}\label{eq:Taylor expand f(S)} 
    \begin{split}
    &Y_{t^n_k} f(S)_{t^n_k \wedge t,t^n_{k+1} \wedge t} \\
    &\quad= Y_{t^n_k} \D f(S_{t^n_k}) S_{t^n_k \wedge t,t^n_{k+1} \wedge t} + \frac{1}{2} Y_{t^n_k} \D^2 f(S_{t^n_k}) S_{t^n_k \wedge t,t^n_{k+1} \wedge t}^{\otimes 2} + Y_{t^n_k} R_{t^n_k \wedge t,t^n_{k+1} \wedge t},
    \end{split}
  \end{align}
  where
  \begin{equation*}
    R_{u,v} := \int_0^1 \int_0^1 \Big(\D^2 f(S_u + r_1 r_2 S_{u,v}) - \D^2 f(S_u)\Big) S_{u,v}^{\otimes 2} \, r_1 \dd r_2 \,\d r_1.
  \end{equation*}
  Since $f \in C^{p + \epsilon}$, we have that $|R_{u,v}| \lesssim |S_{u,v}|^{p + \epsilon}$, from which we see that $R$ has finite $p/(p + \epsilon)$-variation. Since $p/(p + \epsilon) < 1$, it follows that
  \begin{equation*}
    \lim_{n \to \infty} \sum_{k=0}^{N_n-1} Y_{t^n_k} R_{t^n_k \wedge t,t^n_{k+1} \wedge t} = 0,
  \end{equation*}
  where the convergence is uniform in $t \in [0,T]$. Thus, taking $\lim_{n \to \infty} \sum_{k=0}^{N_n-1}$ in \eqref{eq:Taylor expand f(S)} and substituting into \eqref{eq: uniform convergence of rough integral}, we deduce the result.
\end{proof}

\bibliography{quellen}{}
\bibliographystyle{amsalpha}

\end{document}